\newcommand{\field}[1]{\mathbb{#1}}
\newcommand{\C}{\field{C}}
\newcommand{\R}{\field{R}}
\newtheorem{remark}[theorem]{Remark}
\numberwithin{equation}{section}
\begin{document}
\title{A proof of Einstein's effective viscosity for a dilute suspension of spheres\footnote{This paper was submitted for publication on October 1, 2010}}

\author{Brian M.~Haines\thanks{Corresponding author; Department of Mathematics, Penn State University, McAllister Bldg, University Park, PA 16802 ({\tt haines@math.psu.edu})} \and Anna L.~Mazzucato\thanks{Department of Mathematics, Penn State University, McAllister Bldg, University Park, PA 16802 ({\tt mazzucat@math.psu.edu})}}
%Department of Mathematics, Penn State University, McAllister Bldg, University Park, PA 16802

\maketitle

\begin{abstract}
We present a mathematical proof of Einstein's formula for the effective viscosity of a dilute suspension of rigid neutrally--buoyant spheres when the spheres are centered on the vertices of a cubic lattice.  We keep the size of the container finite in the dilute limit and consider boundary effects.  Einstein's formula is recovered as a first-order asymptotic expansion of the effective viscosity  in the volume fraction.  To rigorously justify this expansion, we obtain an explicit upper and lower bound on the effective viscosity. A lower bound  is found using energy methods reminiscent of the work of Keller et al.  An upper bound follows by obtaining an explicit estimate for the tractions, the normal component of the stress on the fluid boundary, in terms of the velocity on the fluid boundary. This estimate, in turn, is established using a boundary integral formulation for the Stokes equation.  Our proof admits a generalization to other particle shapes and the inclusion of point forces to model self-propelled particles.
\end{abstract}

\begin{keywords}
effective viscosity, suspension rheology, Stokes equation
\end{keywords}

\begin{AMS}
35Q35, 76A05, 76D07
\end{AMS}

\pagestyle{myheadings}
\thispagestyle{plain}
\markboth{B.~M.~HAINES AND A.~L.~MAZZUCATO}{A PROOF OF EINSTEIN'S EFFECTIVE VISCOSITY}

\section{Introduction}

The purpose of this work is to give a rigorous justification of  Einstein's formula for the effective viscosity of a dilute suspension of rigid spherical particles in an ambient fluid.
Our approach differs from those previously employed in the literature and is well suited to study  the dilute limit for suspensions in a finite-size container

%The effective viscosity $\hat{\eta}$ of suspensions of rigid particles has been well studied.

%The first result was produced by
Einstein \cite{einstein1906} calculated the effective viscosity  $\hat{\eta}$ of a dilute suspension of spheres to be
\begin{equation}
\label{einstein_formula}
\hat{\eta} \approx\eta \left(1 +\frac{5}{2} \phi\right)
\end{equation}
where $\eta$ is the viscosity of the ambient fluid and $\phi$ is the volume fraction of spheres in the suspension. By a dilute suspension, we mean a suspension where the inter-particle distance $d$ is much larger than the particle size $a$. The formula above should be interpreted as a first-order calculation in an infinite asymptotic expansion of the true effective viscosity in powers of the volume fraction.

Einstein's result was extended to spheroids by Jeffery in \cite{jeffery}, who showed that the effective viscosity depends on the orientations of the spheroids. However,  since the distribution of orientations is not unique in the dilute limit, Jeffery obtained bounds, but not an asymptotic formula,  for the effective viscosity.  His work was completed by Hinch and Leal in \cite{leal+hinch,hinch+leal} who computed an ensemble--averaged effective viscosity by including the effects of rotational diffusion, which lead to a unique steady--state orientation distribution.  A general theory for dilute particle suspensions was later set out by Batchelor in \cite{batchelor69} and Brenner in \cite{brenner1972}.  In \cite{batchelor2spheres,batchelor1}, Batchelor and Green calculated  the $\mathcal{O}(\phi^2)$ correction to Einstein's result, by including the effects of pairwise interactions.  In \cite{almogbrenner}, Almog and Brenner produced a homogenized viscosity field $\hat{\eta}(x)$ that includes boundary effects and agrees with Einstein's result when the volume fraction is uniform.  In \cite{2d_ev,3d_ev_pre,3d_ev_CPAA}, the results for disks and spheroids were extended to dilute suspensions of active particles (particles that propel themselves, such as bacteria).

While most of the results in
\cite{einstein1906,jeffery,leal+hinch,hinch+leal,batchelor69,
brenner1972,batchelor2spheres,batchelor1,almogbrenner,
2d_ev,3d_ev_pre,3d_ev_CPAA} are based on formal asymptotic analysis, a  more rigorous justification of equation \eqref{einstein_formula} is given in \cite{keller1967}, using variational principles. In that work it is proved that for a suspension contained in a domain $\Omega$,
\begin{equation}
\label{keller_result}
%\lim_{|\Omega |\rightarrow \infty}
\hat{\eta} =\eta \left(1 +\frac{5}{2} \phi + o(\phi)\right),
\end{equation}
as $\phi\to 0$, where $\phi$ is the volume fraction. They obtain this result by placing the particles in a finite-size container and letting the container expand in order to avoid the renormalization required in \cite{batchelor1} when considering a suspension filling $\mathbb{R}^3$.
The question of whether the same asymptotic expansion is valid in a container kept at finite volume when the size of the particles vanishes was not addressed.  Furthermore, it is not immediate that the method used in  \cite{keller1967} can be applied to the case of different particle shapes and self--propelled particles, as it requires knowledge of exact solutions to the Stokes equation for a particle inside a concentric sphere.

Concurrently, in \cite{ammari}, a proof of Einstein's result including a $\mathcal{O}(\phi^2)$ correction is given in the case of periodic boundary conditions.  The formula is obtained by considering a periodic suspension of a spherical fluid drops with viscosity $\eta^*$ in a fluid with viscosity $\eta$ in the limit $\eta^*\rightarrow \infty$.  In this paper, we consider a suspension in a finite-size container and derive the formula from a PDE model with rigid particles.

In this paper, we improve upon \eqref{keller_result} by showing that the next correction to Einstein's formula for a suspension in a finite-size container is at least of order $\phi^{3/2}$.
%In this paper, we  prove a stronger result than equations \eqref{einstein_formula} and \eqref{keller_result}--namely, that
More precisely, we will show that
there exists a positive constant $C$ independent of the volume fraction $\phi$ and the number of particles $N$ such that for all $\phi%:=\frac{4\pi}{3}a^3
 < \frac{\pi}{6}$,
\begin{equation}
\label{goal_intro}
\left|\hat{\eta}-\eta \left(1 +\frac{5}{2} \phi\right)\right| \leq C \eta \phi^{3/2}.
\end{equation}
Unlike in previous work, we establish \eqref{goal_intro} assuming that the size of the container $|\Omega|$ remains finite in the dilute limit.
We model the suspended particles as spherical inclusions in $\Omega$ arranged in a regular array away from the walls of the container, though this arrangement is not crucial for our proof.  The condition  on $\phi$ in \eqref{goal_intro} above ensures that the spherical inclusions do not overlap.

The method we use differs from that in \cite{keller1967}
and admits a generalization to different particle shapes and the inclusion of particle self--propulsion, as modeled in \cite{2d_ev,3d_ev_pre,3d_ev_CPAA}.
We leave for further investigation to include collisions between particles in our model.
To derive a lower bound for the effective viscosity, we employ energy methods reminiscent of the work of Keller {\em et al} \cite{keller1967}.
To establish an upper bound, we utilize standard regularity estimates for solutions of the Stokes system, but we derive explicit values for all constants involved in these estimates.
A key step in the proof in fact consists in bounding explicitly the tractions at the fluid boundary in terms of the velocity on the fluid boundary, which in turn we achieve via boundary integral equations in terms of the Green's function for the Stokes problem.
These bounds have the same behavior in powers of $\phi$, namely $\phi^{3/2}$, thus implying \eqref{goal_intro}.
%This explicit constant is used to prove
%the asymptotic behavior of
%an upper bound for the effective viscosity.  The upper and lower bounds in turn yields the asymptotic behavior  $\mathcal{O}(\phi^{3/2})$ in the volume fraction $\phi$ for the effective viscosity, proving Einstein's formula.

We should remark that
the cases of moderately ($d \approx a$) and densely ($d \ll a$) packed suspensions are not as well understood, though some rigorous mathematical results exist in both cases.  Moderately--packed suspensions are studied in \cite{nunan+keller}, where numerical results for spheres are presented, and in \cite{levy+sanchez}, where arbitrarily--shaped particles are studied using two--scale methods.  Densely--packed two--dimensional suspensions of disks are studied  using network approximations in \cite{berlyand-borcea-panchenko,berlyand-panchenko,berlyand+gorb+novikov}.

%In this paper, we justify the simplest result in the dilute case--namely, Einstein's formula for a suspension of spheres.  Using the standard definition for the effective viscosity set out in \cite{batchelor69}, we prove that for a periodic array of spheres of radius $a$ arranged on the lattice $\mathbb{Z}^3$ and contained in a larger sphere $\Omega$, there exists $C>0$, independent of $a$ and the number of spheres $N$, such that
%\begin{equation}
%\label{goal_intro}
%\left|\hat{\eta}-\eta\left(1+\frac{5}{2} \phi\right) \right| \leq C \eta \phi^{3/2},
%\end{equation}
%where\footnote{The upper bound for $\phi$ comes from the condition that the spherical inclusions do not overlap.} $\phi:=\frac{4\pi}{3}a^3<\frac{\pi}{6}$.

The paper is organized as follows. In Section \ref{sec_setup}, we introduce the PDE model for a suspension of spheres and define the effective viscosity.  Next, in Section \ref{sec_procedure}, we outline the procedure for proving equation \eqref{goal_intro}.  Section \ref{sec_tractions} contains the proof of a technical lemma on the continuity of solutions to a boundary integral equation related to Stokes equation.  Finally,  in Section \ref{sec_error_estimate}, the lemma  is applied to prove the main result of the paper, Theorem \ref{thm_ev_arb_n}.

We conclude the Introduction with some notational conventions.
$\Omega$  denotes a generic (bounded) domain in $\mathbb{R}^3$ with smooth boundary.
Throughout, we write $\| \cdot \|_{\Omega} := \| \cdot \|_{L^2(\Omega)}$ (or $\| \cdot \|_{(L^2(\Omega))^3}$ or $\| \cdot \|_{(L^2(\Omega))^{3\times 3}}$, as appropriate).
$H^\alpha(\Omega)$, $\alpha\in \mathbb{R}_+$,  denotes the standard $L^2$-based Sobolev space of functions on a smooth bounded domain, while the space $H^\alpha(\partial\Omega)$ of functions on the boundary is obtained via the usual trace relations.  In addition, we define   $C^\infty_0(\Omega):=\{f \in C^\infty(\Omega) | f (x)=0 \text{ for } x \in \partial \Omega \}$.
If $\sigma$ is a $2$ tensor and $n$ a vector, $\sigma \, n$ is their contraction:
\ $(\sigma n)^i  = \sum_{j} \sigma^i_j \, n^j$, while $:$ denotes the standard inner product between matrices, that is:
$A:B := \sum_{ij} A^i_j \, B_i^j$. \ Finally, we write $I$ for  the identity matrix.

\subsection{Preliminaries}
\label{sec_setup}

Let $\Omega\subset \R^3$ be a bounded domain with a $C^\infty$ boundary $\partial \Omega$.  We will model a
suspension of $N$ neutrally buoyant, passive spheres $B^l$ of radius $a$ centered at $x^l$ in an ambient fluid of viscosity $\eta$ using the Stokes
equation along with  balance of forces and torques at the boundary of the spheres:
\begin{equation}
\label{suspension_pde}
\left\{
\begin{array}{lr}
\eta \Delta u = \nabla p \text{, } \nabla\cdot u = 0, & x\in\Omega_F, \\
% & x\in \Omega_F \\
u = v^l+\omega^l\times (x-x^l), & x\in \partial B^l, \\
u = \epsilon x, & x \in \partial \Omega, \\
\int_{\partial B^l} \sigma n \, dx = 0 \text{, } \int_{\partial B^l} \sigma n \times (x-x^l) \, dx = 0 & l=1,\ldots,N. \\
% & l=1,\ldots,N, \\
\end{array}
\right.
\end{equation}
The use of the Stokes approximation to the full Navier-Stokes fluid equations is justified by the small scales and low velocities  (or equivalently small kinetic energy) inherent to the problem, so that the Reynolds number is typically very small.
Above, $u$ is the fluid velocity, $p$ is the pressure, $\Omega_F:=\Omega \setminus \cup_l B^l$ is the fluid domain,
$v^l\in\R^3$ and $\omega^l\in\R^3$ are the rigid translational and rotational velocities of the $l$th sphere respectively,
$\sigma := -p I+ 2 \eta e$  is the stress (here $I$ is the identity matrix) and $e:=\frac{1}{2}\left(\nabla u + \nabla u^T\right)$ is the strain or symmetric deformation gradient.
On the boundary of the domain $\partial\Omega$, we impose a linear profile for the flow $ u = \epsilon x$, where  $\epsilon$ is
a constant, symmetric, and traceless $3\times 3$ matrix.  The existence of solutions $(u,p)\in (C^\infty(\overline{\Omega}_F))^{4}$
is a standard result (see, for example, Theorems IV.1.1 and IV.5.2 in \cite{galdi}).  As in \cite{einstein1906}, we will study the case when $a \ll d$, where $d$ is
the minimum distance between spheres.  To simplify some calculations, we will consider the case where
$\Omega=B(0,R)$ and the spheres $B^l$ are arranged such that their centers lie
on the points of the array $\mathbb{Z}^3\cap B(0,R-1)$  and, in particular, then $d=1$.

\begin{remark}
This geometric set up is not crucial to our method. In particular, our estimates are valid for arbitrary $N$ and $d$ as $a\to 0$, so that we can assume that $|\Omega| \propto N\, d^3$ is  fixed throughout.
\end{remark}

We next give a definition of effective viscosity in terms of the kinetic energy in the system.
We denote with
\begin{equation}
\label{def_energy}
E(u) := 2 \eta \int_{\Omega_F} e: e \, dx.
\end{equation}
the rate of energy dissipation in $\Omega_F$.
Let $u^h=\epsilon x$.
Then, we define the effective viscosity ${\hat{\eta}}$ as the uniform viscosity in $\Omega$ such that the linear flow
$u^h = \epsilon x$ dissipates as much kinetic energy \ $E(u^h) := 2 \hat{\eta} |\Omega | \epsilon : \epsilon$
as the flow $u$ in the suspension, that is:
%that would equate the rate of energy dissipation \, $E(u)$ of the actual flow $u$ in the suspension  to that $E(u^h)$ = of the linear flow $u^h$, that is,
% be the solution of
%\begin{equation}
%\label{homogenized_pde}
%\left\{
%\begin{array}{lr}
%\hat{\eta} \Delta u^h = \nabla p^h & x\in\Omega \\
%\nabla\cdot u^h = 0 & x\in\Omega \\
%u^h= \epsilon x & x\in\partial\Omega. \\
%\end{array}
%\right.
%\end{equation}
%We will define the effective viscosity $\hat{\eta}$ of our suspension by equating
\begin{equation}
\label{definition_of_ev}
\hat{\eta} := \frac{1}{2} \frac{E(u)}{|\Omega | \epsilon : \epsilon},
\end{equation}
where $|\Omega |$ is the volume of $\Omega$.
Therefore, $\Hat{\eta}$ is a function of the fluid flow $u$, although we do not explicitly show this dependence.
%Note that equation \ref{homogenized_pde} has the solution $u^h=\epsilon x$, $p^h=\text{const.}$. Therefore, we can equivalently write our definition as
%\begin{equation}
%\label{definition_of_ev}
%\hat{\eta} := \frac{1}{2 |\Omega| \epsilon: \epsilon} E(u),
%\end{equation}
%where $|\Omega|$ is the measure of $\Omega$.
%This is equivalent to the definition of the effective viscosity used in \cite{batchelor69,batchelor}, % (see Appendix \ref{app-def_ev}),
%\begin{equation}
%\label{def_ev_batchelor}
%2(\hat{\eta}-\eta)\epsilon: \epsilon =
%\frac{1}{|\Omega|} \sum_l \epsilon: \int_{\partial B^l} \left( \sigma'  n x - 2 \eta u' n \right) dS,
%\end{equation}
%which can be obtained from equation \eqref{definition_of_ev} by repeatedly applying the divergence theorem and using Stokes equation.  Here, $\sigma'=\sigma-\sigma^h$, and $u'=u-u^h$.
%Einstein's formula \cite{einstein1906,batchelor} is obtained formally from \eqref{definition_of_ev}
%, at this step, $\sigma'$ and $u'$ on $\partial B^l$
%if $u$ is the Stokes solution for a single particle in $\R^3$, i.e., the Stokes flow outside an isolated sphere. In particular, any interaction between particles is neglected in this model.

\subsection{Outline}
\label{sec_procedure}

We briefly outline the key steps in proving our main result,  the validity of estimate \eqref{goal_intro}  for $\phi:=\frac{4\pi}{3}(a/d)^3 <\frac{\pi}{6}$ (see Theorem \ref{thm_ev_arb_n}).
This condition on the volume fraction ensures that the particles in suspension, modeled as spherical inclusions, do not overlap (recall $d=1$).
%The ultimate goal of this paper is to show that there exists a $C>0$ independent of $N$ and $a$ such that equation \eqref{goal_intro} holds for $\phi:=\frac{4\pi}{3}a^3 <\frac{\pi}{6}$ (the upper bound for $\phi$ comes from the condition that the spherical inclusions do not overlap).
%\begin{equation}
%\label{goal}
%\left|\hat{\eta}-\eta\left(1+\frac{5}{2} \phi\right) \right| \leq C \eta \phi^{3/2}.
%\end{equation}
%This result is Theorem \ref{thm_ev_arb_n}.

It was first shown by Einstein in \cite{einstein1906} that the first-order correction   $\frac{5}{2} \eta\,\phi$ to the ambient viscosity can be formally obtained by replacing $u$ in equation \eqref{definition_of_ev} with a dilute approximation $u^d$ (which will be defined explicitly in equation \eqref{new_u^d}) and $\sigma$ with the corresponding stress $\sigma^d$.  The dilute approximation $u^d$ is given by superimposing the Stokes solutions in the case of a single inclusion, one for each particle in the suspension. Interactions between particles are completely neglected in this approximation.
%ignoring interactions between inclusions.
Our first step is to show in Lemma \ref{lower_bound_lemma} that, using the dilute solution $u^d$ instead of $u$ in equation \eqref{definition_of_ev} leads to the following bound on the effective viscosity $\Hat{\eta}$:
\[
\eta \left(1+\frac{5}{2}\phi\right) \leq \hat{\eta}+ C\eta \phi^{5/3}
\]
for a positive constant $C$ independent of $\phi$ and $N$.
%In order to prove equation \eqref{goal_intro}, then, it remains to obtain an upper bound for $\hat{\eta}$ with the same asymptotic behavior to $\mathcal{O}(\phi^{3/2})$.
Next, in Lemma \ref{upper_bound_lemma}, we prove an upper bound with the same order in $\phi$ by calculating the effective viscosity for  $u^+$ (denoted $\hat{\eta}(u^+)$), where $u^+$ solves
\begin{equation*}
\left\{
\begin{array}{lr}
\eta \Delta u^+ = \nabla p^+ \text{, } \nabla \cdot u^+ = 0& x \in \Omega_F \\
% & x \in \Omega_F \\
u^+ = \epsilon x^l & x \in \partial B^l \\
u^+ = \epsilon x & x \in \partial \Omega.
\end{array}
\right.
\end{equation*}
Even if we do not have an explicit form for $u^+$ at our disposal,  in Lemma \ref{lemma_ev_representation_arb_n} we obtain the following estimate on $\hat{\eta}(u^+)$:
\begin{equation*}
\left|\hat{\eta}(u^+)-\eta \left(1+ \frac{5}{2} \phi\right)\right| \leq \left|
\frac{1}{2\epsilon : \epsilon |\Omega |} \sum_l \epsilon:\int_{\partial B^l} \sigma^E n (x - x^l) dS \right| + C \eta \phi^{5/3},
\end{equation*}
by utilizing the partial differential equation (PDE for short) that $u^E:= u^+ - u^d$ satisfies (see Equation \eqref{error_pde_arb_n}).
%We cannot solve for $u^+$ explicitly, but we can write a partial differential equation (PDE for short) that $u^E:= u^+ - u^d$ satisfies (Equation \eqref{error_pde_arb_n}).  Then we will prove, in Lemma \ref{lemma_ev_representation_arb_n}, that
Above, $\sigma^E$ is the stress corresponding to the flow $u^E$, $n$ the unit outer normal to the boundary, and $\sigma^E n$ are the corresponding tractions.
%From this estimate, applying the Cauchy-Schwarz inequality and using the fact that
Since $|\Omega | \propto N$, we have
\begin{align}
\left| \hat{\eta}(u^+) - \eta\left(1+  \frac{5}{2} \phi \right) \right|
\leq
%& C \frac{a}{N} \sum_l \int_{\partial B^l} | \sigma^E  n | dS+ C \eta \phi^{5/3} \nonumber \\
%\leq
& C \frac{a^2}{N} \left\| \sigma^E  n \right\|_{\cup_l \partial B^l}+ C \eta \phi^{5/3}. \nonumber
%\leq & C \frac{a^2}{N} \left\| \sigma^E  n \right\|_{L^2(\partial \Omega_F)}+ C \eta \phi^{5/3}.
\end{align}
%However,
%whee with  $n$ is the unit out normal to the boundary, are not known explicitly.
The last step consists in estimating the tractions, which we do not know explicitly.
To this end, in  Theorem \ref{continuity_thm} we show that
$$
\left\| \sigma^E n \right\|_{\partial \Omega_F} \leq C \left\| u^E \right\|_{H^1(\partial \Omega_F)}.
$$
for  $C$ a positive constant  depending on the domain $\Omega_F$.
The most laborious part of the paper concerns  determining an explicit upper bound for $C$. which entails bounding the pressure at the surface of the spheres in terms of the velocity.  The procedure for doing so is outlined in Section \ref{sec_tractions} and the actual bound is given in Corollary \ref{final_t_estimate}.  Finally,   $\left\| u^E \right\|_{H^1(\partial \Omega_F)}$ can be calculated from the PDE it satisfies.  This estimate and the proof of the validity of equation \eqref{goal_intro} (Theorem \ref{thm_ev_arb_n}) are given in Section \ref{sec_error_estimate}.

\section{Estimating tractions on the boundary}
\label{sec_tractions}

As outlined above in Section \ref{sec_procedure}, to prove \eqref{goal_intro} we shall need to estimate the tractions $t=\sigma n$
on the fluid boundary $\partial \Omega_F$ in terms of the boundary velocity values.
These can be related via the following boundary integral equation: for $\xi\in \partial \Omega_F$ (see, for example, \cite{pozrikidis}),
\begin{equation}
\label{bie}
\frac{1}{2} u(\xi) =-\int_{\partial\Omega_F} \left[ \mathcal{G} (\xi-\nu) \sigma(\nu) n(\nu) - n(\nu) \Sigma(\xi,\nu) u(\nu) \right]dS_\nu,
\end{equation}
where
$
\mathcal{G}=(8\pi\eta)^{-1}\left(
I/r+ x x/r^3\right),
$
%in which $I$ is the identity matrix,
is the Oseen tensor, $\mathcal{P}=(4\pi)^{-1}x/r^3$ is the corresponding pressure, and
$\Sigma_{ijk} = -\mathcal{P}_j \delta_{ik} +
\eta \left(\frac{\partial\mathcal{G}_{ij}}{\partial x_k}+\frac{\partial\mathcal{G}_{kj}}{\partial x_i}\right)
$
is the corresponding stress.
%Equation \eqref{bie} can be written as $\left( I/2+ K \right) u = L t,$ where $I$ is the identity, $K$ is compact, and $L$ is an integral operator with a weakly singular kernel.
%In the following Theorem, we show that one can solve this equation for $t\in (H^{\alpha-1}(\partial \Omega_F))^3$ given $u \in (H^\alpha(\partial \Omega_F))^3$ when $\alpha \geq \frac{1}{2}$.  For our purposes, we are interested in the case $\alpha = 1$.
%The result has been shown for $\alpha=\frac{1}{2}$ in \cite{reidinger_steinbach} and is straightforward for $\alpha=1$ and $\alpha \geq \frac{3}{2}$ by Stokes regularity results (see, for example, Theorems IV.1.1 and IV.6.1, respectively, in \cite{galdi}).  We can extend the result to all $\alpha \geq \frac{1}{2}$ by interpolation.  This is done in
The solvability of \eqref{bie} for the tractions in Sobolev spaces is known.

\begin{theorem}
\label{continuity_thm}
Let $\Omega \subset \R^3$ be a bounded domain with a smooth boundary $\Gamma=\partial \Omega$.
Let $u$ be the solution of the Stokes equation in $\Omega$ with  Dirichlet boundary data $g\in (H^\alpha(\Gamma))^3$, $\alpha \geq \frac{1}{2}$, satisfying the compatibility condition
$\int_{\Gamma} g \cdot n \, dx = 0$. Then , there exist tractions $t:=\sigma n\in (H^{\alpha-1}(\Gamma))^3$  such that
$$
\| t \|_{(H^{\alpha-1}(\Gamma))^3} \leq C \| g \|_{(H^\alpha(\Gamma))^3},
$$
where $C>0$ is a constant depending on $\Omega$, but independent of $g$ and $t$.
\end{theorem}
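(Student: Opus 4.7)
The plan is to combine classical Stokes regularity for the Dirichlet problem with a trace theorem for divergence-form tensor fields. First I would establish regularity estimates on the Stokes solution $(u,p)$ in terms of the boundary data $g$, then form the stress $\sigma$, and finally extract the bound on the normal trace $\sigma n$ using the crucial fact that $\mathrm{div}\,\sigma = 0$.

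Step 1: Invoke the standard well-posedness theory for Stokes on a smooth domain (e.g., Theorem IV.7.1 in \cite{galdi}): for $g\in H^\alpha(\Gamma)^3$, $\alpha\geq 1/2$, with $\int_\Gamma g\cdot n\, dS = 0$, there is a unique weak solution $u\in H^{\alpha+1/2}(\Omega)^3$ and pressure $p\in H^{\alpha-1/2}(\Omega)/\R$ with
\[
\|u\|_{H^{\alpha+1/2}(\Omega)} + \|p\|_{H^{\alpha-1/2}(\Omega)/\R} \leq C\|g\|_{H^\alpha(\Gamma)}.
\]
Step 2: Form $\sigma := -pI + \eta(\nabla u + \nabla u^T) \in H^{\alpha-1/2}(\Omega)^{3\times 3}$, which inherits the same bound, and note that $\mathrm{div}\,\sigma = \eta\Delta u - \nabla p = 0$ pointwise.

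Step 3: Take the normal trace. When $\alpha > 1$ the standard trace theorem on $H^{\alpha-1/2}(\Omega)$ delivers a bounded operator onto $H^{\alpha-1}(\Gamma)$ and the estimate is immediate. In the low-regularity range $1/2 \leq \alpha \leq 1$, $\sigma$ has Sobolev index at most $1/2$ and is not regular enough for a classical trace; however, each row belongs to $H(\mathrm{div},\Omega)$, and one invokes the generalized trace inequality
\[
\|\sigma n\|_{H^{s-1/2}(\Gamma)} \leq C\bigl(\|\sigma\|_{H^s(\Omega)} + \|\mathrm{div}\,\sigma\|_{L^2(\Omega)}\bigr), \qquad s\geq 0,
\]
with $s = \alpha - 1/2$, which yields the desired bound since $\mathrm{div}\,\sigma = 0$.

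The main obstacle is precisely the boundary case $\alpha = 1/2$, where $\sigma \in L^2(\Omega)$ only and the trace is purely distributional: the divergence-free identity is what recovers a meaningful normal component on $\Gamma$. An equivalent route, which the passage preceding the theorem seems to invite, is to work directly with the boundary integral equation \eqref{bie}, recast schematically as $S[t] = -(\tfrac12 I + K)[u]$ for the single-layer and adjoint double-layer Stokes operators $S$ and $K$, and appeal to their known mapping and invertibility properties ($S\colon H^{\alpha-1}(\Gamma)\to H^\alpha(\Gamma)$ is an isomorphism on the mean-zero subspace for $\alpha\geq 1/2$ on smooth boundaries). Either approach yields a constant $C=C(\Omega)$ that is not explicit; obtaining a quantitative bound on $C$ is exactly what the rest of the section is devoted to.
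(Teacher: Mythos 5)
Your proposal is sound, and the paper itself gives no proof to compare against; it simply says ``we omit the proof'' and then uses the remaining subsections to make the constant explicit in the special case $\alpha = 1$. Of your two routes, the second one (recasting \eqref{bie} as $S[t] = -(\tfrac12 I + K)[u]$ and inverting the single-layer operator on the mean-zero subspace) is exactly what the paper's set-up suggests, since the theorem immediately follows the introduction of the boundary integral equation and the sentence ``the solvability of \eqref{bie} for the tractions in Sobolev spaces is known.'' Your first route (Stokes regularity plus a normal-trace estimate exploiting $\operatorname{div}\sigma = 0$) is a genuinely independent PDE-side argument and is also correct; it has the advantage of not requiring the layer-potential calculus at all, at the cost of some care in the low-regularity range.

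One small point worth flagging in the first route: for $s = \alpha - \tfrac12 \in (0,\tfrac12)$ the generalized trace inequality you quote can be proved directly by pairing $\sigma n$ against $\psi \in H^{1/2 - s}(\Gamma)$, extending $\psi$ to $\Phi \in H^{1-s}(\Omega)$ and writing $\langle \sigma n, \psi\rangle = \int_\Omega \sigma : \nabla\Phi + \int_\Omega (\operatorname{div}\sigma)\,\Phi$; the first term is an $H^{s}$--$H^{-s}$ duality, which is legitimate because $H^s_0(\Omega) = H^s(\Omega)$ for $s < \tfrac12$. At the endpoint $s = \tfrac12$ (i.e., $\alpha = 1$, the case the paper actually quantifies) this duality identification fails, and the classical trace from $H^{1/2}(\Omega)$ to $L^2(\Gamma)$ is also not bounded, so the interpolation picture has a genuine endpoint subtlety. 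It can be repaired (e.g., via Besov endpoint spaces, or by using the slightly higher interior regularity $u\in H^{3/2+\epsilon}$ that actually holds here since $g$ is smooth on each component of $\partial\Omega_F$), but as written the inequality ``$s\geq 0$'' glosses over it. The boundary-integral route avoids this issue entirely, which is presumably why the paper leans on it.
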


%\begin{Remark}
We omit the proof of this Theorem.

In the remainder of this section, we will be concerned with finding an explicit upper bound for the constant $C$ in Theorem \ref{continuity_thm} above for the case $\alpha = 1$.

To this end, we will combine four inequalities, for which we evaluate constants explicitly.  First, in Section \ref{t_est_section}  extending the divergence-free condition  to the boundary for regular $u$ and a density argument gives
%we will show
\begin{equation}
\label{t_estimate}
\| t \|_{\partial \Omega_F}
\leq \| p \|_{\partial \Omega_F} +  \eta \left\| \frac{\partial u}{\partial n} \right\|_{\partial \Omega_F} +
\sqrt{2}\eta \sum_{i=1}^2 \left\| \frac{\partial u}{\partial \tau^i} \right\|_{\partial \Omega_F},
\end{equation}
where $\tau^i$, $i=1,2$,  are orthogonal unit tangent vectors to the surface $\partial \Omega_F$.
%This is a straightforward estimate obtained for smooth functions from the divergence-free condition valid at the boundary and extended by density to $u$.
%, which relies on the definition of $\sigma$ and a simple $L^2$ estimate of $\frac{\partial u}{\partial n} \cdot n$ on $\partial \Omega_F$ by approximation using smooth solutions, for which incompressibility on the boundary holds.
Next, we estimate the pressure and the normal derivative appearing on the right side of
\eqref{t_estimate}.
In Section \ref{du_dn_estimate_section} we then show that
%$\forall \delta_1, \delta_2, \delta_3, \delta_4 >0$, % explain rather than plop down eqs
%$ \exists C_1,C_2,C_3,C_4,C_5 >0$, for which explicit expressions will be presented in equations (\ref{C_1}--\ref{C_5}), such that
\begin{equation}
\label{dudn_estimate_preliminary}
C_1 \left\| \frac{\partial u}{\partial n} \right\|_{\partial\Omega_F}
\leq C_2 \sum_{i=1}^2 \left\| \frac{\partial u}{\partial \tau^i} \right\|_{\partial\Omega_F}
+ \frac{C_3}{\eta} \| p \|_{\partial \Omega_F}
+ C_4 \| u \|_{\partial \Omega_F}
+ \frac{C_5}{\eta} \|p \|_{\Omega_F},
\end{equation}
where
\begin{align}
C_1 &= \sqrt{2}/2- \sqrt{(d/2-a)^{-1}\left(1+(4\delta_2)^{-1} \right)\delta_4}, \label{C_1} \\
C_2 &= \sqrt{2}/2\left( 1+ \sqrt{\delta_1^{-1}} \right), \label{C_2} \\
C_3 &= \sqrt{\delta_1} + \sqrt{(d/2-a)^{-1} \left(1+(4\delta_2)^{-1}\right)\delta_3}, \label{C_3} \\
C_4 &=\sqrt{(d/2-a)^{-1}} \left[ \sqrt{1+(4\delta_2)^{-1} } \left(\sqrt{(4\delta_3)^{-1}} + \sqrt{(4\delta_4)^{-1}} \right) \right], \label{C_4} \\
%\intertext{and}
C_5 &= \sqrt{\delta_2/(d/2-a)}, \label{C_5}
\end{align}
for all $\delta_1, \delta_2, \delta_3, \delta_4 >0$.
%, having used Cauchy's inequality liberally to obtain this estimate, we can adjust the relative weights of the constants $C_k$.
%This estimate is derived
%through judicious use of the divergence theorem, the Stokes equation, and
This estimate follows from standard integration by parts once an appropriate  extension of the normal vector $n$ to $\partial\Omega_F$  to a function on  $\overline{\Omega}_F$ is constructed.
In Section \ref{p_interior_estimate_section} we next show  that there exists $C>0$ such that
\begin{equation}
\label{p_inequality_preliminary}
\| p + C \|_{\Omega_F} \leq (1+\sqrt{2}) \beta \|p +C\|_{\partial \Omega_F},
\end{equation}
where
\begin{equation}
\label{beta}
\begin{split}
\beta =&
2^{1/8}\sqrt{1+6/d}\left(d/2
+a^{-1}+\sqrt{2} \right)^{1/4}
\\&
+2^{-1/4}\sqrt{1+3(R-d/2)^{-1}} \left( 1+R/3 \right)^{1/4}.
\end{split}
\end{equation}
This estimate is obtained construction an explicit extension $\tilde{p}$ of the boundary trace of $p$ on $\partial \Omega_F$ to  a neighborhood of each $\partial B^l$ and $\partial \Omega$ to obtain a satisfactory dependence of $\beta$ on $\Omega_F$.
%While such an estimate can be shown for any $p \in H^{1/2}(\Omega_F)$, we take advantage of the fact that $p$ is harmonic to obtain a satisfactory dependence of $\beta$ on $\Omega_F$.
%This estimate is obtained by extending $p$ as a function on $\partial \Omega_F$ harmonically in a neighborhood of each $\partial B^l$ and $\partial \Omega$ so that we can estimate the relationship between norms on the boundary and in the interior.  We then multiply each extension by a cutoff function, add them together, and finish with an estimate for a corrector, solving Poisson's equation, to make the extension harmonic.
Finally, we show in Section \ref{p_bdy_estimate_section} that
\begin{equation}
\label{p_bdy_estimate_preliminary}
\| p \|_{\partial \Omega_F} \leq  \frac{1}{2} \| t \|_{\partial \Omega_F} + \frac{3 \eta}{a} \| u \|_{\cup_l \partial B^l}
+ \frac{3 \eta}{R} \| u \|_{\partial B(0,R)}+ \eta \left\| \frac{\partial u}{\partial n} \right\|_{\partial \Omega_F}.
\end{equation}
This estimate is derived from a boundary integral equation for the pressure on the fluid boundary.  The norm of the associated integral operator can be estimated via  the Fourier transform by stereographically projecting the sphere onto a plane. Some of the calculations are simplified utilizing the quaternion formalism and  the Clifford algebra $\mathcal{A}_3$.
%One can obtain an estimate for the norm of the involved integral operators by mapping each segment of the domain $\partial \Omega_F$ to the plane (using a stereographic projection) and taking the Fourier transform.  The calculations involved in this procedure are simplified significantly by embedding the sphere and the plane into the Clifford algebra $\mathcal{A}_3$.

Combining inequalities \eqref{t_estimate}, \eqref{dudn_estimate_preliminary}, \eqref{p_inequality_preliminary}, and \eqref{p_bdy_estimate_preliminary}, we obtain
(we recall that $p$ is defined up to an arbitrary additive constant):

\begin{theorem}
\label{final_estimate_theorem}
Let $0<2a<d<R\in\R$ and let $u,p$ solve
\begin{equation*}
\left\{
\begin{array}{lr}
\eta \Delta u = \nabla p \text{, } \nabla \cdot u = 0 & x \in \Omega_F \\
% & x \in \Omega_F \\
u = g \in H^1\cap L^\infty(\partial \Omega_F) & x \in \partial \Omega_F,
\end{array}
\right.
\end{equation*}
where $\int_{\partial \Omega_F} g \cdot n dS = 0$, $\Omega_F:= B(0,R) \setminus \Omega_B$, $\Omega_B = \bigcup_{l\in \Lambda} B(x^l, a)$, $x^l \in d\mathbb{Z}^3$,
and
$
\Lambda=\left\{ l \in \mathbb{Z}^3 \big| \text{ } |x^l| + \frac{d}{2}-a < R \right\}.
$
Then there exists a normalization of $p$ such that
\begin{equation} \label{final_estimate_traction}
%\| t \|_{(L^2(\partial \Omega_F))^3} \leq \frac{\eta}{1-C_6} \left[\frac{6}{a}+\frac{3}{C_1} \left(C_4+C_2\right)\right] \| u \|_{(H^1(\partial \Omega_F))^3},
%\begin{split}
%\| t \|_{(L^2(\partial \Omega_F))^3}
%\leq& \frac{\eta}{1-C_6} \left[\frac{6}{a}+3\frac{C_4}{C_1} \right] \| u\|_{(L^2(\partial \Omega_F))^3} \\
%& +\frac{\eta}{1-C_6} \left[\sqrt{2}+3\frac{C_2}{C_1} \right] \sum_{i=1}^2 \left\| \frac{\partial u}{\partial \tau^i} \right\|_{(L^2(\partial \Omega_F))^3},
%\end{split}
\begin{split}
\| t \|_{\partial \Omega_F}
\leq
\eta \frac{10}{7} \left[\frac{6}{R}+3\frac{C_4}{2\sqrt{2}} \right] \| u\|_{\partial \Omega_F}
+ \eta \frac{60}{7} \frac{1}{a} \| u \|_{\cup_l \partial B^l} \\
 +\eta \frac{10}{7} \left[\frac{3}{4}+25\sqrt{2} \right] \sum_{i=1}^2 \left\| \frac{\partial u}{\partial \tau^i} \right\|_{\partial \Omega_F},
\end{split}
\end{equation}
where
%\begin{equation*}
%C_4 = \frac{1}{\sqrt{d/2-a}} \left[ \left(1+\frac{1}{4\delta_2} \right)^{\frac{1}{2}} \left(\frac{1}{2\sqrt{\delta_3}} + \frac{1}{2 \sqrt{\delta_4}} \right) \right],
%\end{equation*}
$C_4$ is given in equation \eqref{C_4} %, % $\alpha$ and
%$\beta$ is given in equation
% \eqref{alpha} and
%\eqref{beta},
and $\delta_k$ are chosen as in equation \eqref{deltas}.

\end{theorem}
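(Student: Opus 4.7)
The plan is to combine the four preliminary estimates \eqref{t_estimate}, \eqref{dudn_estimate_preliminary}, \eqref{p_inequality_preliminary}, and \eqref{p_bdy_estimate_preliminary} into a single inequality for $\|t\|_{\partial\Omega_F}$ and to absorb the unwanted self-referential appearance of $\|t\|_{\partial\Omega_F}$ on the right-hand side. Since the Stokes pressure is defined only up to an additive constant, I would first fix the normalization of $p$ to be the one provided by \eqref{p_inequality_preliminary}; this choice prescribes which traction $t=\sigma n$ is being estimated (the theorem explicitly builds in this freedom via the phrase ``there exists a normalization of $p$''). With this choice, all four inequalities apply simultaneously to the same normalized pressure.

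Next, I would eliminate the interior pressure norm by inserting \eqref{p_inequality_preliminary} into the last term of \eqref{dudn_estimate_preliminary}, producing a purely boundary-level bound
\[
\Bigl\|\tfrac{\partial u}{\partial n}\Bigr\|_{\partial\Omega_F}
\leq \frac{C_2}{C_1}\sum_{i=1}^2\Bigl\|\tfrac{\partial u}{\partial \tau^i}\Bigr\|_{\partial\Omega_F}
+ \frac{C_3+(1+\sqrt{2})\beta\,C_5}{C_1\,\eta}\|p\|_{\partial\Omega_F}
+ \frac{C_4}{C_1}\|u\|_{\partial\Omega_F}.
\]
Substituting this into both \eqref{t_estimate} and \eqref{p_bdy_estimate_preliminary} removes the normal derivative from the right-hand side of each. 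I would then solve the resulting form of \eqref{p_bdy_estimate_preliminary} for $\|p\|_{\partial\Omega_F}$---which is possible provided its coefficient of $\|p\|_{\partial\Omega_F}$ is strictly less than $1$---and insert the outcome into the modified \eqref{t_estimate}. This leaves a schematic inequality $\|t\|_{\partial\Omega_F}\leq K\,\|t\|_{\partial\Omega_F} + (\text{boundary velocity terms})$, with $K=K(\delta_1,\ldots,\delta_4,\beta,a,d,R)$.

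The main obstacle is this final absorption, which requires $K<1$, together with the intermediate requirement that the self-coefficient of $\|p\|_{\partial\Omega_F}$ not cross $1$ either. The parameters $\delta_1,\ldots,\delta_4$ must be selected, as in \eqref{deltas}, so that simultaneously $C_1>0$ (keeping \eqref{dudn_estimate_preliminary} useful), the two successive absorption steps are legal, and the final $K$ produces $(1-K)^{-1}=\tfrac{10}{7}$, which is the source of the prefactor $\tfrac{10}{7}$ in \eqref{final_estimate_traction}. The coefficient $\tfrac{60}{7}$ of $\|u\|_{\cup_l\partial B^l}$ then appears as $\tfrac{10}{7}\cdot 2\cdot \tfrac{3}{a}$, the factor $2$ being the doubling inherited from absorbing the $\tfrac{1}{2}\|t\|$ contribution of \eqref{p_bdy_estimate_preliminary}; the remaining two coefficients arise analogously from collecting the contributions of $C_4/C_1$, $(1+\sqrt{2})\beta C_5/C_1$, and the tangential-derivative multipliers. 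Once the $\delta_i$ are fixed by \eqref{deltas}, the rest of the proof is algebraic bookkeeping, with the only nontrivial verification being that the chosen $\delta_i$ do in fact keep every absorption coefficient strictly below~$1$.
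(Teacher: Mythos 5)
Your proposal is correct and reproduces the paper's own argument: the paper likewise first substitutes \eqref{renormalized_p_ineq} into \eqref{dudn_estimate_preliminary}, inserts the result into \eqref{t_estimate} and \eqref{p_bdy_estimate_preliminary} to obtain \eqref{C_estimate_thm_eq_1}--\eqref{C_estimate_thm_eq_2}, then solves the latter for $\|p\|_{\partial\Omega_F}$ and absorbs the residual $\tfrac{1}{2}\|t\|_{\partial\Omega_F}$ term, choosing the $\delta_k$ as in \eqref{deltas} so that the self-coefficients stay below $1$. The only part you leave implicit --- that the specific $\delta_k$ of \eqref{deltas} indeed make both absorption coefficients admissible and produce $C_1=2\sqrt{2}$, $C_2 = 32 + 1/\sqrt{2}$, etc. --- is also largely asserted rather than verified in the paper, so your account matches the written proof in both structure and level of detail.
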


\begin{proof}
Combining inequalities \eqref{t_estimate}, \eqref{dudn_estimate_preliminary}, and \eqref{p_inequality_preliminary} and renormalizing $p$ such that
\begin{equation}
\label{renormalized_p_ineq}
\| p \|_{\Omega_F} \leq (1+\sqrt{2}) \beta \|p \|_{\partial \Omega_F}
\end{equation}
yields
\begin{equation}
\label{C_estimate_thm_eq_1}
\begin{split}
\| t \|_{\partial \Omega_F} \leq & \left[ 1 +  C_1^{-1}\left(C_3+ (1+\sqrt{2}) C_5 \beta \right) \right] \| p \|_{\partial \Omega_F} \\
& + \eta \left( \sqrt{2} + \frac{C_2}{C_1} \right) \sum_{i=1}^2 \left\| \frac{\partial u}{\partial \tau^i} \right\|_{\partial \Omega_F}
+ \eta \frac{C_4}{C_1} \| u \|_{\partial \Omega_F}.
\end{split}
\end{equation}
Combining inequalities \eqref{p_bdy_estimate_preliminary}, \eqref{dudn_estimate_preliminary}, and \eqref{renormalized_p_ineq} produces
\begin{equation}
\label{C_estimate_thm_eq_2}
\begin{split}
\| p \|_{\partial \Omega_F} & \left[1 - C_1^{-1} \left(C_3+ (1+\sqrt{2}) C_5 \beta \right) \right] \\
\leq & \frac{1}{2} \| t \|_{\partial \Omega_F} + \eta \left[ \frac{3}{R}+\frac{C_4}{C_1} \right] \| u \|_{\partial \Omega_F}
+ \eta \frac{3}{a} \| u \|_{\cup_l \partial B^l}
%\\ &
+ \eta \frac{C_2}{C_1} \sum_{i=1}^2 \left\| \frac{\partial u}{\partial \tau^i} \right\|_{\partial \Omega_F}.
\end{split}
\end{equation}
We wish to use inequality \eqref{C_estimate_thm_eq_2} to remove $\| p \|_{\partial \Omega_F}$ from equation \eqref{C_estimate_thm_eq_1}.  In order to do this, we must choose
$\delta_k$ (in $C_l$) such that
\begin{equation*}
C_6 =
\left[1 - C_1^{-1} \left(C_3+ (1+\sqrt{2}) C_5 \beta  \right) \right]^{-1}
\left[ 1 +  C_1^{-1} \left(C_3 + (1+\sqrt{2}) C_5 \beta  \right) \right] < 2,
\end{equation*}
which, in turn, requires that $C_1^{-1} \left(C_3 + (1+\sqrt{2}) C_5 \beta \right) < 1/3$.
This can be achieved by choosing
%\begin{align}
%\delta_1 =& \frac{1}{256}, \label{delta_1} \\
%\delta_2 =& \frac{1}{256} \frac{d (d-2a)}{d+3} \nonumber \\
%& \cdot\left[ 2^{1/4} \left( \left(\sqrt{2} + \frac{1}{a}+\frac{d}{2}\right)^{\frac{1}{4}}+ \left(1+\frac{R}{3}\right)^{\frac{1}{4}}\right) \left(1+\sqrt{1+\alpha}\right)\right]^{-2}, \\
%\delta_3 =& \frac{1}{128} \frac{\delta_2 (d-2a)}{1+4 \delta_2}, \\
%\intertext{and}
%\delta_4 =& \frac{\delta_3}{4}. \label{delta_4}
%\end{align}
\begin{equation}
\label{deltas}
\begin{array}{cccc}
\delta_1 = \frac{1}{2^{11}}, & %, \label{delta_1} \\
\delta_2 = \frac{1}{2^{10}} \frac{d-2a}{(1+\sqrt{2})^2\beta^2}, & %\label{delta_2} \\
\delta_3 = \frac{1}{2^{10}} \frac{\delta_2 (d-2a)}{4\delta_2+1}, & %\label{delta_3} \\
%\intertext{and}
\delta_4 = 2^8 \delta_3. % \label{delta_4}
\end{array}
\end{equation}
Then $C_1=2\sqrt{2}$, $C_2=32+ 1/\sqrt{2}$, $C_6=3/5$, and $\eta \frac{1}{C_1} \left(C_3 + (1+\sqrt{2}) C_5 \beta  \right)=\frac{1}{6}$.
Combining equations \eqref{C_estimate_thm_eq_1}--\eqref{deltas}, we get the desired result.
%\begin{equation}
%\| t \|_{(L^2(\partial \Omega_F))^3} \leq \frac{\eta}{1-C_6} \left[\frac{6}{a}+\frac{3}{C_1} \left(C_4+C_2\right)\right] \| u \|_{(H^1(\partial \Omega_F))^3},
%\end{equation}
%\begin{equation}
%\begin{split}
%\| t \|_{(L^2(\partial \Omega_F))^3}
%\leq& \eta \frac{10}{7} \left[\frac{6}{a}+3\frac{C_4}{2\sqrt{2}} \right] \| u\|_{(L^2(\partial \Omega_F))^3} \\
%& +\eta \frac{10}{7} \left[\frac{3}{4}+25\sqrt{2} \right] \sum_{i=1}^2 \left\| \frac{\partial u}{\partial \tau^i} \right\|_{(L^2(\partial \Omega_F))^3}.
%\end{split}
%\end{equation}

\end{proof}

To apply this to the error estimate, we will use
\begin{corollary}
\label{final_t_estimate}
Let $u,p$ solve equation \eqref{suspension_pde}.
%\begin{equation}
%\left\{
%\begin{array}{lr}
%\eta \Delta u = \nabla p & x \in \Omega_F \\
%\nabla \cdot u = 0 & x \in \Omega_F \\
%u = g \in H^1(\partial \Omega_F) & x \in \partial \Omega_F,
%\end{array}
%\right.
%\end{equation}
%where $\Omega_F$ is defined as in Theorem \ref{final_estimate_theorem}.
Assume, without loss of generality, that $d=1$ (and hence $R \propto N^{1/3}$).  Then $\exists C>0$, independent of $a$ and $N$ such that
\begin{equation*}
\begin{split}
\frac{\| t \|_{\partial \Omega_F}}{C \eta}
\leq &  \frac{N^{1/2}}{a^{1/2}}  \| u \|_{\partial \Omega_F} +  \frac{1}{a} \| u \|_{\cup_l \partial B^l}+  \sum_{i=1}^2 \left\| \frac{\partial u}{\partial \tau^i} \right\|_{\partial \Omega_F}.
\end{split}
\end{equation*}
\end{corollary}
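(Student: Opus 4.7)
The plan is to specialize Theorem~\ref{final_estimate_theorem} to $d=1$ and $R\propto N^{1/3}$, and then extract explicit asymptotics for the quantities $\beta$ and $C_4$ that appear in \eqref{final_estimate_traction}. Of the three terms on the right-hand side of \eqref{final_estimate_traction}, the coefficient of $\|u\|_{\cup_l \partial B^l}$ is already of the claimed form $(\mathrm{const})\cdot\eta/a$, and the coefficient of $\sum_i \|\partial u/\partial \tau^i\|_{\partial \Omega_F}$ is a universal constant times $\eta$; only the coefficient of $\|u\|_{\partial \Omega_F}$, namely $\eta(10/7)\bigl(6/R + 3C_4/(2\sqrt{2})\bigr)$, requires nontrivial work. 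The task therefore reduces to showing $6/R + C_4 \lesssim N^{1/2}/a^{1/2}$ uniformly in the dilute regime.

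The first step is to estimate $\beta$. Substituting $d=1$ and $R\le c\,N^{1/3}$ into \eqref{beta}, the first summand is bounded by a constant times $(1/2+a^{-1}+\sqrt{2})^{1/4}\lesssim a^{-1/4}$, and the second by a constant times $(1+R/3)^{1/4}\lesssim N^{1/12}$; hence $\beta^{2}\le C_{\beta}\bigl(a^{-1/2}+N^{1/6}\bigr)$ for an absolute constant $C_{\beta}$. The second step is to plug the explicit $\delta_k$ from \eqref{deltas} into the expression \eqref{C_4}. Since $\delta_2 \propto (1-2a)/\beta^{2}$ is small in this regime, one has the leading-order simplifications $\sqrt{1+(4\delta_2)^{-1}}\sim (4\delta_2)^{-1/2}$, $\delta_3 \sim \delta_2(1-2a)/2^{10}$, and $(4\delta_4)^{-1/2}\ll (4\delta_3)^{-1/2}$; together with the identity $(d/2-a)(d-2a)=(1-2a)^{2}/2$, a routine computation yields $C_4 \le C'\,\beta^{2}/(1-2a)^{2}$. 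For $a$ bounded away from $1/2$ (which is the regime in which the corollary will be used in the proof of Theorem~\ref{thm_ev_arb_n}), the factor $(1-2a)^{-2}$ is absorbed into the constant, giving $C_4 \le C''\bigl(a^{-1/2}+N^{1/6}\bigr)$.

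Finally, combining these estimates: $6/R\le 6$ since $R\ge d=1$, and the elementary inequalities $a^{-1/2}\le N^{1/2}a^{-1/2}$ and $N^{1/6}\le N^{1/2}/a^{1/2}$ (the latter valid since $a\le 1/2\le N^{1/3}$ for $N\ge 1$) give $6/R + C_4 \le C\,N^{1/2}/a^{1/2}$. Substituting back into \eqref{final_estimate_traction} delivers the corollary. The whole calculation is essentially bookkeeping; the only delicate point is verifying that the factor $(4\delta_2+1)^{-1}$ hidden in the definition of $\delta_3$ does not secretly reintroduce $a$- or $N$-dependence into the denominator of $C_4$, which it does not because $\delta_2\ll 1$ uniformly in the dilute regime and $(4\delta_2+1)^{-1}=1+O(\delta_2)$.
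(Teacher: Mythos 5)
Your proof is correct and follows the only sensible route: the paper does not write out a proof of Corollary~\ref{final_t_estimate}, treating it as a direct specialization of Theorem~\ref{final_estimate_theorem}, and your computation tracking $\beta^2 \lesssim a^{-1/2}+N^{1/6}$ and $C_4\lesssim \beta^2/(1-2a)^2$ is exactly the required bookkeeping. Two small remarks: first, one should note in passing that the rigid-motion boundary data of \eqref{suspension_pde} lies in $H^1\cap L^\infty(\partial\Omega_F)$ with zero total flux (since $\mathrm{tr}\,\epsilon=0$), so Theorem~\ref{final_estimate_theorem} applies; second, your caveat that the absorption of $(1-2a)^{-2}$ requires $a$ bounded away from $1/2$ is apt — the constant in the corollary as stated is not literally uniform as $a\uparrow 1/2$, but this is a minor imprecision in the paper's statement and is harmless since the corollary is invoked only in the dilute limit $a\to 0$.
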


\subsection{Estimate of $\| t \|_{\partial \Omega_F}$ }
\label{t_est_section}

In this section we derive the first of four inequalities, estimate \eqref{t_estimate} on the tractions at the boundary of $\Omega_F$. In what follows, we fix an orthonormal frame
$\{\tau_1,\tau_2,n\}$ on $\partial \Omega_f$, where $n$ is the unit outer normal and $\tau^i$ are tangent to the boundary.

\begin{lemma}
\label{incompressibility_at_bdy}
Let $u,p$ solve
\begin{equation}
\label{stokes_dirichlet}
\left\{
\begin{array}{lr}
\eta \Delta u = \nabla p \text{, } \nabla \cdot u = 0 & x \in \Omega_F \\
% & x \in \Omega_F \\
u = g \in H^1(\partial \Omega_F) & x \in \partial \Omega_F,
\end{array}
\right.
\end{equation}
where
%$\Omega_F$ has a smooth boundary and
$\int_{\partial \Omega_F} g \cdot n \, dS = 0$.  Then
\begin{equation}
\label{incompressibility_on_bdy}
\int_{\partial \Omega_F} \left| \frac{\partial u}{\partial n} \cdot n \right|^2 dS \leq 2 \sum_{i=1}^2 \int_{\partial \Omega_F} \left| \frac{\partial u}{\partial \tau^i} \cdot \tau^i \right|^2 dS.
\end{equation}
%where $n$ is the outward-facing unit normal to $\partial \Omega_F$ and $\tau^i$ are unit tangent vectors to $\partial \Omega_F$ such that $\tau^1 \cdot \tau^2 = 0$ $\forall x\in\partial \Omega_F$.
\end{lemma}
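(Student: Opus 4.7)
The plan is to first establish the identity for sufficiently smooth data where all boundary derivatives exist in the classical sense, apply a pointwise Cauchy--Schwarz inequality, and then pass to the $H^1$ boundary data by density. The key observation is simply that the divergence-free condition $\nabla\cdot u=0$, valid in the interior, extends up to the boundary when $u$ is regular enough, and admits a pointwise decomposition in the orthonormal frame $\{\tau^1,\tau^2,n\}$.

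Concretely, I would first assume $g\in C^\infty(\partial\Omega_F)$ (still satisfying the compatibility condition $\int_{\partial\Omega_F} g\cdot n\,dS=0$), so that by standard Stokes regularity the solution $u$ is in $C^\infty(\overline{\Omega}_F)$, as cited in the paper's setup via Galdi's theorems. Then $\nabla\cdot u = 0$ pointwise on $\overline{\Omega}_F$, and in particular on $\partial\Omega_F$. Expressing the divergence at a boundary point in the orthonormal frame $\{\tau^1,\tau^2,n\}$ gives the pointwise identity
\begin{equation*}
\frac{\partial u}{\partial n}\cdot n \;=\; -\frac{\partial u}{\partial \tau^1}\cdot \tau^1 \;-\; \frac{\partial u}{\partial \tau^2}\cdot \tau^2 \qquad\text{on } \partial\Omega_F.
\end{equation*}
Squaring and applying the elementary inequality $(a+b)^2\leq 2(a^2+b^2)$ pointwise yields
\begin{equation*}
\left|\frac{\partial u}{\partial n}\cdot n\right|^2 \;\leq\; 2\left(\left|\frac{\partial u}{\partial \tau^1}\cdot \tau^1\right|^2 + \left|\frac{\partial u}{\partial \tau^2}\cdot \tau^2\right|^2\right),
\end{equation*}
and integration over $\partial\Omega_F$ gives \eqref{incompressibility_on_bdy} for smooth data.

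To promote this to arbitrary $g\in H^1(\partial\Omega_F)$ with $\int_{\partial\Omega_F} g\cdot n\,dS=0$, I would choose a sequence $g_k\in C^\infty(\partial\Omega_F)$ with $g_k\to g$ in $(H^1(\partial\Omega_F))^3$; if necessary one subtracts off a small correction supported in a neighborhood of a fixed point on $\partial\Omega_F$ to ensure each $g_k$ satisfies the compatibility condition exactly, a correction whose $H^1$ norm tends to zero. The corresponding Stokes solutions $u_k$ converge to $u$ in $(H^{3/2}(\Omega_F))^3$, and the associated tangential and normal derivative traces converge in $L^2(\partial\Omega_F)$ by the continuity of the Dirichlet-to-Neumann map for the Stokes system (this is precisely the content of Theorem \ref{continuity_thm} for $\alpha=1$ applied componentwise to $\nabla u$). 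Since the inequality holds for each $u_k$, it is preserved in the $L^2(\partial\Omega_F)$ limit.

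The one genuine technical point is the density step, namely justifying that the normal and tangential derivative traces converge in $L^2(\partial\Omega_F)$ for Dirichlet data in $H^1(\partial\Omega_F)$. This is not completely automatic because $H^{3/2}$ interior regularity is borderline for taking $L^2$ traces of first derivatives. However, invoking the boundary-integral representation \eqref{bie} together with the classical $H^1\to L^2$ continuity of the traction map (Theorem \ref{continuity_thm}), and the analogous estimate for tangential derivatives which are simply boundary differential operators applied to $g$, resolves this cleanly. Hence the inequality extends to all admissible $g\in H^1(\partial\Omega_F)$, completing the proof.
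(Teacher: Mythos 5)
Your proof follows the same strategy as the paper's: smooth the boundary data, use that smooth Stokes solutions satisfy $\nabla\cdot u = 0$ up to the boundary, write the divergence pointwise in the frame $\{\tau^1,\tau^2,n\}$, apply $(a+b)^2 \leq 2(a^2+b^2)$, and pass to the limit via $H^1$-convergence of the data. Your only deviation is in how you justify the density step: the appeal to Theorem~\ref{continuity_thm} is slightly off-target, since that theorem controls the traction $\sigma n = -p n + 2\eta e\,n$, not $\nabla u$, and isolating $\partial u/\partial n$ from $\sigma n$ would require an independent bound on $p|_{\partial\Omega_F}$; what is actually being used (and what the paper invokes, a bit loosely, as ``Galdi's regularity plus the trace theorem'', which is borderline at $H^{3/2}(\Omega_F)$) is the $H^1(\partial\Omega_F)\to L^2(\partial\Omega_F)$ boundedness of the Stokes Dirichlet-to-Neumann map $g\mapsto \partial u/\partial n$, combined with your correct observation that the tangential derivatives converge automatically because they are boundary differential operators applied to $g$.
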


%\begin{Remark}
%This Lemma establishes an estimate that would be obvious if we could assume incompressibility on the boundary pointwise for $u\in H^{3/2}(\Omega_F)$.  This will be useful for estimating the $H^1(\partial\Omega_F)$ norm of velocities in terms of tangential derivatives only, since the $H^{3/2}(\Omega_F)$ norm involves tangential but not normal derivatives.
%\end{Remark}

\begin{proof}
The Lemma is an immediate consequence of the divergence-free conditions for smooth solutions to \eqref{stokes_dirichlet} and follows by density for $u \in (H^{3/2}(\Omega_F))^3$.
Let $g_m\in (C^\infty(\partial \Omega_F))^3$ be a sequence of functions satisfying  \
$\int_{\partial \Omega_F} g_m \cdot n \, dS = 0$ that converges to $g $  in $(H^1(\partial  \Omega_F))^3$. Let $u_m, p_m$ solve equation \eqref{stokes_dirichlet} with  data $g_m$.
%Let $g_m \in C^\infty(\partial \Omega_F)$ be such that $\int_{\partial \Omega_F} g_m \cdot n \, dS = 0$, $\| g_m - g \|_{H^1(\partial \Omega_F)} \rightarrow 0$ as $m\rightarrow \infty$ and let $u_m, p_m$ solve equation \eqref{stokes_dirichlet} with $g=g_m$.
%\begin{equation*}
%\left\{
%\begin{array}{lr}
%\eta \Delta u_m = \nabla p_m & x \in \Omega_F \\
%\nabla \cdot u_m = 0 & x \in \Omega_F \\
%u_m = g_m  & x \in \partial \Omega_F.
%\end{array}
%\right.
%\end{equation*}
By Theorem IV.5.2 in \cite{galdi}, $u_m \in (C^\infty(\bar{\Omega}_F))^3$,  and therefore  $\nabla \cdot u_m = 0$ on $\partial \Omega_F$ by continuity. Consequently,
%Precisely, that means
%\begin{equation*}
%\frac{\partial u_m}{\partial n} \cdot n + \sum_{i=1}^2 %\frac{\partial u_m}{\partial \tau^i} \cdot \tau^i = 0
%\end{equation*}
%for $x\in \partial \Omega_F$.
\begin{equation}
\label{incompressibility_on_bdy_approximation}
\int_{\partial \Omega_F} \left| \frac{\partial u_m}{\partial n} \cdot n \right|^2 dS \leq 2 \sum_{i=1}^2 \int_{\partial \Omega_F} \left| \frac{\partial u_m}{\partial \tau^i} \cdot \tau^i \right|^2 dS.
\end{equation}
Then,  regularity results for the Stokes equation (see Theorem IV.6.1 in \cite{galdi})  and the trace theorem give
\begin{equation*}
\begin{split}
%\| u - u_m \|_{(L^2(\partial \Omega_F))^3} +
\| \nabla u - \nabla u_m \|_{\partial \Omega_F}
&\leq C_1 \| u - u_m \|_{(H^{3/2}(\Omega_F))^3}
%\\ &
\leq C_2 \| g-g_m \|_{(H^1(\partial\Omega_F))^3}.
\end{split}
\end{equation*}
That is,  both the normal and tangential derivatives of $u_m$ at the boundary converge in $L^2$ to those of $u$.
%Therefore,
%\begin{align}
%\lim_{m\rightarrow \infty} & \left(\int_{\partial \Omega_F} \left| \frac{\partial u_m}{\partial n} \cdot n - \frac{\partial u}{\partial n} \cdot n \right|^2 dS \right)^{\frac{1}{2}}
%\leq C_2 \lim_{m\rightarrow \infty} \| g-g_m \|_{(H^1(\partial\Omega_F))^3} = 0 \\
%\intertext{and}
%\lim_{m\rightarrow \infty} & \left( \int_{\partial \Omega_F} \left| \frac{\partial u_m}{\partial \tau^i} \cdot \tau^i - \frac{\partial u}{\partial \tau^i} \cdot \tau^i \right|^2 dS\right)^{\frac{1}{2}}
%\leq C_2 \lim_{m\rightarrow \infty} \| g-g_m \|_{(H^1(\partial\Omega_F))^3} = 0.
%\end{align}
We can hence conclude by passing to  the limit $m \rightarrow \infty$ in \eqref{incompressibility_on_bdy_approximation}.
% in equation \eqref{incompressibility_on_bdy_approximation} yields equation \eqref{incompressibility_on_bdy}.
\end{proof}

\begin{theorem}
Let $(u,p)$ solve equation \eqref{stokes_dirichlet}.  Then,
\begin{equation*}
\| t \|_{\partial \Omega_F}
\leq \| p \|_{\partial \Omega_F} +  \eta \left\| \frac{\partial u}{\partial n} \right\|_{\partial \Omega_F} +
\sqrt{2}\eta \sum_i \left\| \frac{\partial u}{\partial \tau^i} \right\|_{\partial \Omega_F}.
\end{equation*}

\end{theorem}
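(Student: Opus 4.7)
The plan is to decompose the traction $t=\sigma n$ pointwise using the orthonormal frame $\{\tau^1,\tau^2,n\}$ on $\partial\Omega_F$, apply the triangle inequality in $L^2(\partial\Omega_F)$, and finally reduce the remaining symmetric-gradient term to tangential derivatives by invoking the divergence-free condition at the boundary provided by Lemma \ref{incompressibility_at_bdy}.

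First I would write $\sigma_{ij}=-p\delta_{ij}+\eta(\partial_i u_j+\partial_j u_i)$ and expand in indices
\begin{equation*}
t_i=-p\,n_i+\eta\,\partial_j u_i\,n_j+\eta\,\partial_i u_j\,n_j=-p\,n_i+\eta\,(\partial_n u)_i+\eta\,w_i,
\end{equation*}
where $w_i:=\partial_i u_j\,n_j$ is the one vector that is not immediately a normal or tangential derivative of $u$. Taking $L^2$ norms on $\partial\Omega_F$ and applying the triangle inequality produces two of the three target terms, reducing the theorem to showing $\|w\|_{\partial\Omega_F}\leq\sqrt{2}\sum_i\|\partial_{\tau^i}u\|_{\partial\Omega_F}$.

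For this remaining bound I would project $w$ onto the moving frame. Contracting with $\tau^k$ gives $\tau^k_i\partial_i u_j\,n_j=(\partial_{\tau^k}u)\cdot n$, and contracting with $n$ gives $(\partial_n u)\cdot n$, so pointwise
\begin{equation*}
|w|^2=\sum_{k=1}^2\bigl(\partial_{\tau^k}u\cdot n\bigr)^2+\bigl(\partial_n u\cdot n\bigr)^2.
\end{equation*}
Integrating over $\partial\Omega_F$ and using Lemma \ref{incompressibility_at_bdy} to replace the normal--normal piece by $2\sum_i\|\partial_{\tau^i}u\cdot\tau^i\|^2_{\partial\Omega_F}$, followed by the pointwise orthogonal decomposition $|\partial_{\tau^i}u|^2=(\partial_{\tau^i}u\cdot n)^2+\sum_j(\partial_{\tau^i}u\cdot\tau^j)^2$, yields
\begin{equation*}
\|w\|^2_{\partial\Omega_F}\leq\sum_i\|\partial_{\tau^i}u\cdot n\|^2_{\partial\Omega_F}+2\sum_i\|\partial_{\tau^i}u\cdot\tau^i\|^2_{\partial\Omega_F}\leq 2\sum_i\|\partial_{\tau^i}u\|^2_{\partial\Omega_F}.
\end{equation*}
The elementary inequality $\sqrt{a^2+b^2}\leq a+b$ then delivers the claimed $\sqrt{2}\sum_i$ constant.

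I expect no serious obstacle: the entire argument is pointwise linear algebra on the boundary combined with an $L^2$ triangle inequality, with the single nontrivial input being Lemma \ref{incompressibility_at_bdy}, which uses $\nabla\cdot u=0$ to trade the normal--normal gradient component for tangential ones. The only mild care needed is to keep the frame $\{\tau^1,\tau^2,n\}$ fixed pointwise so that the projection identities are genuine pointwise equalities rather than requiring differentiation of the normal.
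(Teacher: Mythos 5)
Your proof is correct and follows essentially the same route as the paper: decompose $t=-pn+\eta\,\partial_n u+\eta\,\nabla u^T n$, apply the $L^2$ triangle inequality, project the residual vector $w=\nabla u^T n$ onto the moving frame $\{\tau^1,\tau^2,n\}$, and convert the normal--normal component via Lemma~\ref{incompressibility_at_bdy}. Your write-up is in fact more explicit than the paper's terse chain of inequalities, in particular in spelling out the Pythagorean decomposition of $|w|^2$ and the exact bookkeeping that produces the $\sqrt{2}$ constant.
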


\begin{proof}
%In this proof, we will denote $\| \cdot \| := \| \cdot \|_{L^2(\partial \Omega_F)}$ (or $\| \cdot \|_{(L^2(\partial \Omega_F))^3}$, as appropriate).
Applying Lemma \ref{incompressibility_at_bdy}, we have
\begin{equation*}
\begin{split}
\| t \| \leq &\| p n \| + \eta \| \nabla u \cdot n + \nabla u^T \cdot n \|
\leq \| p \| +  \eta \left\| \frac{\partial u}{\partial n} \right\|
%\\ &
+ \eta\left\|\frac{\partial u}{\partial n}\cdot n + \sum_i \frac{\partial u}{\partial \tau^i}\cdot n
%+ \frac{\partial u}{\partial \tau^2} \cdot n
\right\| \\
\leq & \| p \| +  \eta \left\| \frac{\partial u}{\partial n} \right\| +
\sqrt{2}\eta \sum_i \left\| \frac{\partial u}{\partial \tau^i} \right\|. \\
\end{split}
\end{equation*}
\end{proof}

\subsection{Estimate of $\left\| \frac{\partial u}{\partial n} \right\|_{\partial \Omega_F}$}
\label{du_dn_estimate_section}

In this section, we tackle the second inequality \eqref{dudn_estimate_preliminary}.
In order to bound $\left\| \frac{\partial u}{\partial n} \right\|_{\partial \Omega_F}$, we use
the following result.

\begin{theorem}
Let $0<2a<d<R\in \R$ and $\Omega_F:= B(0,R) \setminus \Omega_B$, where $\Omega_B = \bigcup_{l\in \Lambda} B(x^l, a)$, $x^l \in d\mathbb{Z}^3$,
and $\Lambda=\left\{ l \in \mathbb{Z}^3 \big| \text{ } |x^l| + \frac{d}{2}-a < R \right\}$.
Let $(u,p)$ solve  \eqref{stokes_dirichlet} with $g\in H^1(\partial \Omega_F)$.
%the Stokes equation inside $\Omega_F$ with Dirichlet boundary data $g\in H^1(\partial \Omega_F)$.
Then, for all $\delta : =(\delta_1,\delta_2,\delta_3,\delta_4) > 0$,
\begin{equation}
\label{dudn_estimate}
C_1 \,  \left\| \frac{\partial u}{\partial n} \right\|_{\partial\Omega_F}
\leq C_2 \,\sum_{i=1}^2 \left\| \frac{\partial u}{\partial \tau^i} \right\|_{\partial\Omega_F}
+ \frac{C_3} {\eta} \| p \|_{\partial \Omega_F}
+ C_4\, \| u \|_{\partial \Omega_F}
+ \frac{C_5} {\eta} \|p \|_{\Omega_F},
\end{equation}
where $C_1(\delta)$--$C_5(\delta)$ are given in equations (\ref{C_1}--\ref{C_5}).
%\begin{align}
%C_1 &= \left[\frac{1}{\sqrt{2}}- \frac{1}{\sqrt{d/2-a}} \left(1+\frac{1}{4\delta_2} \right)^{\frac{1}{2}} \sqrt{\delta_4} \right], \label{C_1} \\
%C_2 &= \frac{1}{\sqrt{2}}\left( 1+ \frac{1}{\sqrt{\delta_1}} \right), \label{C_2} \\
%C_3 &= \frac{1}{\eta} \left[ \sqrt{\delta_1} + \frac{1}{\sqrt{d/2-a}} \left(1+\frac{1}{4\delta_2}\right)^{\frac{1}{2}} \sqrt{\delta_3} \right], \label{C_3} \\
%C_4 &= \frac{1}{\sqrt{d/2-a}} \left[ \left(1+\frac{1}{4\delta_2} \right)^{\frac{1}{2}} \left(\frac{1}{2\sqrt{\delta_3}} + \frac{1}{2 \sqrt{\delta_4}} \right) \right], \label{C_4} \\
%\intertext{and}
%C_5 &= \frac{1}{\sqrt{d/2-a}} \frac{\sqrt{\delta_2}}{\eta} \label{C_5}
%\end{align}
%for all $\delta_1,\delta_2,\delta_3,\delta_4 > 0$.

% $\forall \delta > 0$,
%\begin{equation}
%\left\| \frac{\partial u}{\partial n} \right\|_{(L^2(\partial \Omega_F))^3} \leq \sqrt{ \frac{1}{2}+ \frac{1}{4\delta}}
%\sum_i \left\| \frac{\partial u}{\partial \tau_i} \right\|_{(L^2(\partial \Omega_F))^3} + \frac{\sqrt{2\delta}}{\eta} \left\| p \right\|_{L^2(\partial \Omega_F)}.
%\end{equation}
\end{theorem}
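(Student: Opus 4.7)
The plan is to convert the boundary norm $\|\partial u/\partial n\|_{\partial \Omega_F}$ into a volume integral on $\Omega_F$ via the divergence theorem applied to the vector field $|(\nabla u)\tilde n|^{2}\, \tilde n$, where $\tilde n$ is a smooth extension of the outward unit normal $n$ from $\partial \Omega_F$ to $\overline\Omega_F$. Since $\tilde n \cdot n = 1$ on $\partial \Omega_F$, this identity is exact. Expanding the divergence produces three kinds of terms: an interior integral containing $\nabla \tilde n$ (and so, after bounding $\|\nabla \tilde n\|_\infty$, a multiple of $\|\nabla u\|_{\Omega_F}^{2}$); second-derivative terms in $u$, which the Stokes equation $\eta \Delta u = \nabla p$ replaces by integrals involving $\nabla p$; and surface contributions involving the pressure, the tangential derivatives, and $u$ itself on $\partial \Omega_F$.

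\textbf{Extension of $n$.} A natural construction is to patch the radial vector fields $x \mapsto -(x-x^{l})/|x-x^{l}|$ near each $\partial B^{l}$ and $x \mapsto x/R$ near $\partial B(0,R)$ using smooth cutoffs supported in one-sided tubular neighborhoods of width at most $d/2-a$. Because consecutive inclusions are separated by $d > 2a$, these neighborhoods are pairwise disjoint, so $\tilde n$ can be extended smoothly (e.g.\ by zero) in between. This choice yields $\|\tilde n\|_\infty \leq 1$ and $\|\nabla \tilde n\|_\infty \leq C/(d/2-a)$, which is precisely the source of the factor $(d/2-a)^{-1}$ that enters $C_3$, $C_4$, and $C_5$.

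\textbf{Young's inequality and absorption.} After one additional integration by parts that moves derivatives off $p$ in the interior pressure term (so that only $\|p\|_{\Omega_F}$ appears, not $\|\nabla p\|_{\Omega_F}$), every mixed product on the right-hand side is split by Young's inequality in the form $xy \leq \delta_k x^{2}+(4\delta_k)^{-1} y^{2}$. The parameter $\delta_1$ splits a boundary product of $\|p\|_{\partial\Omega_F}$ with $\sum_i \|\partial u/\partial\tau^{i}\|_{\partial\Omega_F}$ (feeding $\sqrt{\delta_1}$ into $C_3$ and $\sqrt{\delta_1^{-1}}$ into $C_2$); $\delta_2$ splits an interior product of $\|p\|_{\Omega_F}$ with $\|\nabla u\|_{\Omega_F}$ (feeding $\sqrt{\delta_2/(d/2-a)}$ into $C_5$ and the factor $1+(4\delta_2)^{-1}$ into subsequent constants); finally $\delta_3$ and $\delta_4$ split the two boundary products of $\|u\|_{\partial\Omega_F}$ with $\|p\|_{\partial\Omega_F}$ and with $\|\partial u/\partial n\|_{\partial\Omega_F}$ respectively, the latter being absorbed into the left-hand side and producing the coefficient $C_1 = \sqrt{2}/2 - \sqrt{(d/2-a)^{-1}(1+(4\delta_2)^{-1})\delta_4}$. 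The prefactor $\sqrt{2}/2$ in $C_1$ and $C_2$ is the sharp constant from the divergence-free identity of Lemma \ref{incompressibility_at_bdy}, which is invoked to trade the normal-normal component of $\nabla u$ on the boundary against the tangential derivatives $\partial u/\partial\tau^{i}$.

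\textbf{Main obstacle.} The bulk of the work is in the bookkeeping: identifying exactly which cross-term is split with which $\delta_k$ so that the final constants match the stated form, and checking that the patched vector field $\tilde n$ can be taken globally $C^{1}$ with the claimed $(d/2-a)^{-1}$ bound on $\nabla\tilde n$ uniformly in the number of inclusions. As in Lemma \ref{incompressibility_at_bdy}, all the divergence-theorem manipulations are first justified on the dense class of smooth boundary data, and the final estimate then follows from the regularity bounds of Theorem IV.6.1 in \cite{galdi} by passing to the limit in $H^{1}$.
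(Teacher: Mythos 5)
Your overall plan (divergence theorem with an extended normal, Stokes equation to trade second derivatives for the pressure, further IBP to turn $\nabla p$ into $p$, Young's inequality with parameters $\delta_k$, density to justify everything) is the right plan, and the resulting constants do come out as you anticipate. However, the specific vector field you propose does not work, and this is the crucial step.

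You propose to apply the divergence theorem to $W := |(\nabla u)\tilde n|^{2}\,\tilde n$, which has the extended normal $\tilde n$ in \emph{three} slots. The second-derivative part of its divergence is
\[
\tilde n\cdot\nabla\,|(\nabla u)\tilde n|^{2}
\;=\; 2\sum_i \bigl((\nabla u)\tilde n\bigr)_i \Bigl(\sum_{j,k}\tilde n_j\tilde n_k\,\partial_j\partial_k u_i\Bigr) + (\text{lower order}),
\]
and $\sum_{j,k}\tilde n_j\tilde n_k\,\partial_j\partial_k u_i = (\tilde n\cdot\nabla)^2 u_i$ is \emph{not} $\Delta u_i$, so the Stokes equation $\eta\Delta u = \nabla p$ cannot be substituted. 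Your claim that ``second-derivative terms in $u$ \dots the Stokes equation replaces by integrals involving $\nabla p$'' is therefore false for this $W$. The paper instead takes the field $V$ with components $V_k = \sum_{i,j}\partial_j u_i\,\mathcal{N}_j\,\partial_k u_i$, i.e.\ $(\nabla u)^T(\nabla u\,\mathcal{N})$, which also satisfies $V\cdot n = |\partial u/\partial n|^2$ on $\partial\Omega_F$ but has $\mathcal{N}$ in a single slot. Its divergence splits into $(\nabla u\,\mathcal{N})\cdot\Delta u$, where Stokes applies, plus $\tfrac12\,\mathcal{N}\cdot\nabla|\nabla u|^2$, which integrates by parts to $\tfrac12\int_{\partial\Omega_F}|\nabla u|^2\,dS - \tfrac12\int_{\Omega_F}(\nabla\cdot\mathcal{N})|\nabla u|^2\,dx$, plus the $\nabla u\,\nabla\mathcal{N}:\nabla u$ term you correctly anticipated.

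Two further issues. First, the source of the $\sqrt{2}/2$ in $C_1,C_2$: it is the boundary term $\tfrac12\int_{\partial\Omega_F}|\nabla u|^2\,dS$ just produced, decomposed via $|\nabla u|^2 = |\partial u/\partial n|^2 + \sum_i|\partial u/\partial\tau^i|^2$ and absorbing the normal part into the left-hand side (giving $1-\tfrac12=\tfrac12$, hence $\sqrt{2}/2$ after taking square roots). It is \emph{not} the constant from Lemma \ref{incompressibility_at_bdy}; that lemma enters only in bounding the boundary term $\tfrac1\eta\int_{\partial\Omega_F}\nabla u:nn\,p\,dS$ and contributes the $\sqrt{\delta_1^{-1}}$ factor inside $C_2$. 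Second, you need $\nabla\cdot\tilde n\ge 0$ a.e.\ to discard the term $-\tfrac12\int_{\Omega_F}(\nabla\cdot\tilde n)|\nabla u|^2\,dx$; a smooth cutoff times the radial unit vector does not give this sign automatically, so the construction of the extension requires more care than you indicate.
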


\begin{proof}
We again smooth out the data $g$ first and use density arguments.
Let $g^m \in (C^\infty (\partial \Omega_F))^3$ be such that $\int_{\partial \Omega_F} g^m \cdot n \, dS = 0$ and $g^m \rightarrow g$ in $(H^1(\partial \Omega_F))^3$.
Let $(u_m,p_m)$ be the solution to \eqref{stokes_dirichlet} with boundary data $g_m$.
Then, $u^m \in (C^\infty (\overline{\Omega}_F))^3$, $p^m \in C^\infty(\overline{\Omega}_F)$, and $u^m \rightarrow u$ in $(H^{3/2}(\Omega_F))^3$, $p^m \rightarrow p$ in $H^{1/2}(\Omega_F)$  (see, for example, Theorem IV.5.2 \cite{galdi}).
%Then there exists (see, for example, Theorem IV.5.2 \cite{galdi})
%$u^m \in (C^\infty (\overline{\Omega}_F))^3$ and $p^m \in C^\infty(\overline{\Omega}_F)$ %such that $\eta \Delta u^m = \nabla p^m$ and $\nabla \cdot u^m = 0$ for $x \in\Omega_F$, $u^m = g^m$ for $x\in \partial \Omega_F$, $u^m \rightarrow u$ in $(H^{3/2}(\Omega_F))^3$, and $p^m \rightarrow p$ in $H^{1/2}(\Omega_F)$.

We next construct an extension of $n\in (C(\partial\Omega_F))^3$ to a function $\mathcal{N}\in (C(\Omega_F))^3\cap (L^2(\Omega_F))^3$ by extending it radially in the vicinity of all spherical inclusions and the spherical boundary.  Specifically, let $\mathcal{N}\in (C(\Omega_F))^3\cap (L^2(\Omega_F))^3$ be such that $\mathcal{N}=n$ on $\partial \Omega_F$.  Let $\mathcal{N}=0$ on $\partial B\left(x^l,\frac{d}{2}\right)$ for $l\in \Lambda$ and extend $\mathcal{N}$ inside $B\left(x^l,\frac{d}{2}\right) \setminus B(x^l,a)$ such that each component is linear on any line emanating from $x^l$.  Similarly, let $\mathcal{N}=0$ on $\partial B\left(0,R-\frac{d}{2}\right)$ and extend $\mathcal{N}$ inside $B(0,R)\setminus B\left(0,R-\frac{d}{2}\right)$ such that each component is linear on any line emanating from the origin.  Where it has not yet been defined, set $\mathcal{N}=0$.  It is easy to check that $\mathcal{N}$ has weak derivatives in $L^2(\Omega_F)\cap L^\infty(\Omega_F)$, that
\begin{equation}
\label{nabla_N_L_infty}
\begin{array}{cc}
\|\mathcal{N} \|_{(L^\infty(\Omega_F))^3} \leq 1, &
\left\| \nabla \mathcal{N} \right\|_{(L^\infty(\Omega_F))^{3 \times 3}} \leq \frac{1}{d/2-a},
\end{array}
\end{equation}
and $\nabla \cdot \mathcal{N} \geq 0$ almost everywhere.

By the definition of $\mathcal{N}$, we can write
\begin{equation*}
\begin{split}
\int_{\partial \Omega_F} \left| \frac{\partial u^m}{\partial n} \right|^2 dS
=& \int_{\partial \Omega_F} \left( \frac{\partial u^m}{\partial x} \cdot n \right)\cdot \frac{\partial u^m}{\partial n} dS
= \int_{\partial \Omega_F} \left( \frac{\partial u^m}{\partial x} \cdot \mathcal{N} \right) \cdot \frac{\partial u^m}{\partial n} dS \\
=& \int_{\Omega_F} \nabla \cdot \left[ \left(\nabla u^m  \mathcal{N}\right)  \nabla u^m \right] \, dx.
\end{split}
\end{equation*}
Performing the differentiation, using the Stokes equation, and integrating by parts, we get that
\begin{equation*}
\begin{split}
\int_{\partial \Omega_F} \left| \frac{\partial u^m}{\partial n} \right|^2 dS
=& \int_{\Omega_F} \left[ \nabla^2 u^m \mathcal{N} : \nabla u^m
+ \nabla u^m \nabla\mathcal{N} : \nabla u^m
%\right. \\ & \left.
+   \nabla u^m \mathcal{N} \cdot \Delta u^m \right] \, dx \\
=&\frac{1}{2} \int_{\partial \Omega_F} \nabla u^m : \nabla u^m dS
-\frac{1}{2}\int_{\Omega_F} \left(\nabla \cdot \mathcal{N}\right) \nabla u^m : \nabla u^m \, dx \\
&+\int_{\Omega_F} \nabla u^m \nabla \mathcal{N} :\nabla u^m \, dx
+\frac{1}{\eta} \int_{\Omega_F} \left(\nabla u^m  \mathcal{N}\right) \cdot \nabla p^m \, dx.
\end{split}
\end{equation*}
Integrating by parts once more and using the fact that $\nabla \cdot u^m=0$, we get that
\begin{align}
\int_{\partial \Omega_F} \left| \frac{\partial u^m}{\partial n} \right|^2 dS
=& \frac{1}{2} \int_{\partial \Omega_F} \nabla u^m : \nabla u^m dS \nonumber \\
&+ \int_{\Omega_F} \left[ \nabla u^m \nabla \mathcal{N}: \nabla u^m
-\frac{1}{\eta} \nabla u^m : \left(\nabla \mathcal{N}\right)^T p^m \right] \, dx \nonumber \\
&+\frac{1}{\eta} \int_{\partial \Omega_F} \nabla u^m : n n p^m dS
- \frac{1}{2} \int_{\Omega_F} \left(\nabla \cdot \mathcal{N}\right) \nabla u^m :\nabla u^m \, dx \nonumber \\
=:& I_1 + I_2 + I_3 + I_4. \label{dudn_estimate_1}
\end{align}
Since $\nabla \cdot \mathcal{N} \geq 0$ a.e., we can neglect $I_4$.
We can decompose
%\[
%\nabla u^m = \frac{\partial u^m}{\partial n} n + \sum_{i=1}^2 \frac{\partial u^m}{\partial \tau^i} \tau^i
%\]
$\nabla u^m = \nabla u^m n n + \sum_{i=1}^2 \nabla u^m \tau^i \tau^i$
to estimate
\begin{equation}
\label{dudn_estimate_2}
I_1 = \frac{1}{2} \left(\int_{\partial \Omega_F} \left| \frac{\partial u^m}{\partial n} \right|^2 dS +
\sum_{i=1}^2 \int_{\partial \Omega_F} \left| \frac{\partial u^m}{\partial \tau^i} \right|^2 dS. \right)
\end{equation}
We can use Cauchy's inequality to estimate, for all $\delta_1 > 0$,
\begin{equation*}
I_3 \leq \frac{1}{4 \delta_1} \int_{\partial \Omega_F} \left| \frac{\partial u^m}{\partial n} \cdot n \right|^2 dS +
\frac{\delta_1}{\eta^2} \| p^m \|^2_{\partial \Omega_F}.
\end{equation*}
Applying Lemma \ref{incompressibility_at_bdy}, we get that
%\begin{equation}
%\label{d_u_d_n_dot_n}
%\frac{\partial u_i}{\partial n} n_i = - \sum_j \frac{\partial u_i}{\partial \tau^j} \tau_i
%\end{equation}
\begin{equation}
\label{dudn_estimate_3}
I_3 \leq \frac{1}{2 \delta_1} \sum_i \int_{\partial \Omega_F} \left| \frac{\partial u^m}{\partial \tau^i} \right|^2 dS +
\frac{\delta_1}{\eta^2} \| p^m \|^2_{\partial \Omega_F}.
\end{equation}
We can also use equation \eqref{nabla_N_L_infty} and Cauchy's inequality to get, for all $\delta_2 > 0$,
\begin{equation}
\label{dudn_estimate_4}
\begin{split}
%\int_{\Omega_F} \left[ \frac{\partial u_i}{\partial x_j} \right. & \left. \frac{\partial \mathcal{N}_j}{\partial x_k} \frac{\partial u_i}{\partial x_k}
%-\frac{1}{\eta} \frac{\partial u_i}{\partial x_j} \frac{\partial \mathcal{N}_j}{\partial x_i} p \right] \, dx \\
I_2 \leq & \frac{1}{d/2-a} \left[ \sum_{j,k} \int_{\Omega_F} \left| \frac{\partial u^m}{\partial x_j} \cdot \frac{\partial u^m}{\partial x_k} \right| \, dx
+ \frac{1}{\eta} \sum_{i,j} \int_{\Omega_F} \left| \frac{\partial u^m_i}{\partial x_j} p^m \right| \, dx \right] \\
\leq & \frac{1}{d/2-a} \left[ \left(1+\frac{1}{4\delta_2}\right) \| \nabla u^m \|^2_{\Omega_F} + \delta_2 \frac{1}{\eta^2} \|p^m \|^2_{\Omega_F} \right].
\end{split}
\end{equation}
Integrating by parts and using the Stokes equation,
\begin{equation*}
\begin{split}
\| \nabla u^m \|^2_{\Omega_F} %=& \int_{\Omega_F} \nabla u^m : \nabla u^m \, dx \\
=& -\int_{\Omega_F} u^m \cdot \Delta u^m \, dx + \int_{\partial \Omega_F} u^m \cdot \frac{\partial u^m}{\partial n} dS \\
=& -\frac{1}{\eta}\int_{\Omega_F} u^m \cdot \nabla p^m \, dx + \int_{\partial \Omega_F} u^m \nabla u^m \cdot n dS.
\end{split}
\end{equation*}
Integrating by parts once more and using the fact that $\nabla \cdot u^m=0$, we get that
\begin{equation*}
\begin{split}
\| \nabla u^m \|^2_{\Omega_F}
=& \frac{1}{\eta} \int_{\partial \Omega_F} u^m \cdot n p^m dS + \int_{\partial \Omega_F} u^m \cdot \frac{\partial u^m}{\partial n} dS.
\end{split}
\end{equation*}
Using Cauchy's inequality twice more, we get that
\begin{align}
\| \nabla u^m \|^2_{\Omega_F} \leq & \frac{1}{4 \delta_3} \| u^m \|^2_{\partial \Omega_F} + \frac{\delta_3}{\eta^2} \| p^m \|^2_{\partial \Omega_F}
%\nonumber \\&
+ \frac{1}{4 \delta_4} \| u^m \|^2_{\partial \Omega_F} + \delta_4 \left\| \frac{\partial u^m}{\partial n} \right\|^2_{\partial \Omega_F}. \label{dudn_estimate_5}
\end{align}
Combining estimates \eqref{dudn_estimate_1}, \eqref{dudn_estimate_2}, \eqref{dudn_estimate_3}, \eqref{dudn_estimate_4}, and \eqref{dudn_estimate_5}, and taking the limit as $m\rightarrow \infty$ (all terms converge to the desired quantities due to Stokes regularity--see, for example, Theorem IV.6.1 in \cite{galdi}--and the trace theorem), we get equation \eqref{dudn_estimate}.

\end{proof}

\subsection{Estimate of $\| p \|_{\Omega_F}$}
\label{p_interior_estimate_section}

We now turn to proving the pressure estimate \eqref{p_inequality_preliminary}.
We will normalize $p$, which is defined only up to an additive constant, in order to optimize constants in that inequality.
%We now wish to estimate $\| p \|_{\Omega_F}$ in terms of $\| p \|_{\partial \Omega_F}$.  Note that in Stokes equation, $p$ is only determined up to a constant.  Therefore, we are free to choose that constant in order to get the best estimate for $\| p \|_{\Omega_F}$.

Since best constants in trace and Sobolev inequalities are difficult to obtain  in general, we proceed as follows. We first construct an explicit extension $\tilde{p} \in H^{1/2}(\Omega_F)$ of the trace of $p$ in   $L^2(\partial\Omega_F)$, for which we have good control on the trace constants.
%The estimate is done in two steps: first, we will extend $p \big|_{\partial \Omega_F}\in L^2(\partial \Omega_F)$ to a function $\tilde{p} \in H^{1/2}(\Omega_F)$.
%We then define $p_0 \in H^{1/2}(\Omega_F)$ as the solution of the problem
Then, since $p$ is harmonic from Stokes equation, $p = \tilde{p}-p_0$, where $p_0$ solves
\begin{equation}
\label{pde_p}
\left\{
\begin{array}{lr}
\Delta p_0 = \Delta \tilde{p} & x \in \Omega_F \\
p_0 = 0 & x\in\partial \Omega_F. \\
\end{array}
\right.
\end{equation}
%Then $p = \tilde{p}-p_0$ and we can write
%\begin{equation*}
Hence, \  $\|p \|_{\Omega_F} \leq \| \tilde{p} \|_{H^{1/2}(\Omega_F)} + \| p_0 \|_{\Omega_F}$.
Then, \ $\|p \|_{\Omega_F} \leq (1+C_1) C_2 \|p \|_{\partial\Omega_F}$, where  $C_1$, given in Theorem \ref{thm_pde_p}, is the constant in \
%\begin{equation*}
$\| p_0 \|_{\Omega_F} \leq C_1 \| \tilde{p} \|_{H^{1/2}(\Omega_F)}.
$ %\end{equation*}
and
%, we will determine $C_1$ in the inequality
%\begin{equation*}
%$\| p_0 \|_{\Omega_F} \leq C_1 \| \tilde{p} \|_{H^{1/2}(\Omega_F)}.
%$ %\end{equation*}
%Then, in Theorem \ref{thm_extension}, we will determine $C_2$ in the inequality
%\begin{equation*}
$C_2$, given in Theorem \ref{thm_extension}, is the constant in \
$\| \tilde{p} \|_{H^{1/2}(\Omega_F)} \leq C_2 \| p \|_{\partial\Omega_F}.
$ %\end{equation*}
%Combining these, we can write
%\begin{equation*}
%$\|p \|_{\Omega_F} \leq (1+C_1) C_2 \|p \|_{\partial\Omega_F}.
%$ %\end{equation*}
%and is

\begin{lemma}
\label{pde_p_lemma_1}
Let $\tilde{p} \in H^{1/2}\cap L^\infty(\Omega_F)$ and let $p_0\in H^{1/2}(\Omega_F)$ be the solution of equation \eqref{pde_p}.
%, where $\Omega_F$ is smooth.
Then
\begin{equation}
\label{estimate_pde_p_L^2}
\| p_0 \|_{\Omega_F} \leq \left\| \tilde{p} \right\|_{\Omega_F} + \sqrt{| \Omega_F|} \left\| \tilde{p} \right\|_{L^\infty(\partial \Omega_F)}.
\end{equation}
\end{lemma}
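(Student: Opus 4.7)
The plan is to reduce the problem to an estimate for a harmonic function and then apply the maximum principle. Introduce the auxiliary function $w := p_0 - \tilde{p}$. Since $p_0$ solves $\Delta p_0 = \Delta\tilde{p}$ in $\Omega_F$ with homogeneous Dirichlet data, the difference $w$ satisfies $\Delta w = 0$ in $\Omega_F$ together with the Dirichlet condition $w = -\tilde{p}$ on $\partial \Omega_F$. Thus the problem is converted to estimating a harmonic function with prescribed (bounded) boundary data.

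Next, apply the maximum principle for harmonic functions to $w$. This yields
\[
\|w\|_{L^\infty(\Omega_F)} \leq \|\tilde{p}\|_{L^\infty(\partial \Omega_F)}.
\]
Since $\Omega_F$ has finite volume, passing from the $L^\infty$ bound to an $L^2$ bound via H\"older's inequality gives
\[
\|w\|_{\Omega_F} \leq \sqrt{|\Omega_F|}\,\|w\|_{L^\infty(\Omega_F)} \leq \sqrt{|\Omega_F|}\,\|\tilde{p}\|_{L^\infty(\partial \Omega_F)}.
\]

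Finally, since $p_0 = w + \tilde{p}$, the triangle inequality in $L^2(\Omega_F)$ immediately yields
\[
\|p_0\|_{\Omega_F} \leq \|w\|_{\Omega_F} + \|\tilde{p}\|_{\Omega_F} \leq \|\tilde{p}\|_{\Omega_F} + \sqrt{|\Omega_F|}\,\|\tilde{p}\|_{L^\infty(\partial\Omega_F)},
\]
which is the desired inequality \eqref{estimate_pde_p_L^2}.

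The only subtlety is that at the stated regularity $\tilde{p}\in H^{1/2}\cap L^\infty(\Omega_F)$, the equation $\Delta p_0=\Delta \tilde{p}$ must be read in the sense of distributions and the maximum principle for $w$ is not entirely standard. The cleanest way to handle this is by a density/approximation argument: approximate $\tilde{p}$ by a sequence $\tilde{p}^m \in C^\infty(\overline{\Omega}_F)$ converging to $\tilde{p}$ in $H^{1/2}(\Omega_F)$ and uniformly on $\partial\Omega_F$ (so that both norms on the right-hand side pass to the limit), solve the corresponding elliptic problems to obtain $p_0^m\in C^\infty(\overline{\Omega}_F)$, apply the classical maximum principle to $w^m = p_0^m - \tilde{p}^m$ to obtain the inequality at level $m$, and then pass to the limit using continuity of the solution map for the Dirichlet Laplacian. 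This approximation step is the main obstacle, but it is essentially routine once the extension $\tilde p$ is chosen regular enough, and the principal content of the lemma is simply the harmonic-reduction plus maximum-principle observation above.
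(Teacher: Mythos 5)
Your argument is correct, and it takes a genuinely different and, in my view, cleaner route than the paper. The paper proves this lemma via a duality argument: it writes $\|p_0^n\|_{\Omega_F}$ as a supremum of $\int_{\Omega_F}\Delta\tilde p_n\,\phi$ over test functions $\phi\in C^\infty_0(\Omega_F)$ normalized by $\|\Delta\phi\|_{\Omega_F}=1$, integrates by parts twice, and then controls the resulting boundary term $\int_{\partial\Omega_F}|\partial\phi/\partial n|\,dS$ by a somewhat delicate approximation argument (replacing $\phi$ by a sequence $\phi_k$ with $\partial\phi_k/\partial n\ge 0$) followed by Cauchy--Schwarz, which produces the factor $\sqrt{|\Omega_F|}$. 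You instead observe that $w:=p_0-\tilde p$ is harmonic in $\Omega_F$ with boundary data $-\tilde p$, invoke the maximum principle to get $\|w\|_{L^\infty(\Omega_F)}\le\|\tilde p\|_{L^\infty(\partial\Omega_F)}$, and then obtain the same $\sqrt{|\Omega_F|}$ factor from the trivial $L^\infty\!\to\!L^2$ embedding, finishing with the triangle inequality. This avoids the paper's delicate claim about approximating $|\partial\phi/\partial n|$ from below by nonnegative normal derivatives of admissible test functions, which is the least transparent step of the paper's proof. Both proofs share the same density/approximation overhead (the paper also smooths $\tilde p$ and passes to the limit, implicitly requiring boundary convergence in $L^\infty(\partial\Omega_F)$), so on that score neither has an advantage; but your reduction to the maximum principle is shorter and relies only on standard, fully quantitative facts, yielding exactly the same constant. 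One small remark: it is worth stating explicitly that the maximum principle is applied to $|w^m|$ (i.e.\ to both $\pm w^m$), so that the $L^\infty$ bound on the boundary data directly bounds $\|w^m\|_{L^\infty(\Omega_F)}$, but this is routine.
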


\begin{proof}
Let $\tilde{p}_n \in C^\infty(\Omega_F)$ be such that $\tilde{p}_n \rightarrow \tilde{p}$ in $H^{1/2}(\Omega_F)$ and let $p^0_n$ be the (smooth) solution of \eqref{pde_p} with data $\tilde{p}_n$.
%\in C^\infty_0(\Omega_F):=\{f \in C^\infty(\Omega_F) | f (x)=0 \text{ for } x \in \partial \Omega_F \}$
%solve
%\begin{equation}
%\label{p^0_n_pde}
%\left\{
%\begin{array}{lr}
%\Delta p^0_n = \Delta \tilde{p}_n & x \in \Omega_F \\
%p^0_n = 0 & x \in \partial \Omega_F.
%\end{array}
%\right.
%\end{equation}
Then, by standard regularity results (see, for example, \cite{lions}), $p^0_n \rightarrow p_0$ in $H^{1/2}(\Omega_F)$.  Let $\mathcal{E}:= \{ \phi \in C^\infty_0(\Omega_F), \|\Delta \phi \|_{\Omega_F}=1\}$.\
 %Noting that $p^0_n=0$ on $\partial \Omega_F$ and integrating by parts,
%\begin{equation*}
%\| p^0_n \|_{\Omega_F} = \sup_{\phi \in \mathcal{E}} \int_{\Omega_F} p^0_n \Delta \phi \, dx
%= \sup_{\phi \in \mathcal{E}} -\int_{\Omega_F} \nabla p^0_n \cdot \nabla \phi \, dx.
%\end{equation*}
Using  the dual characterization of the $L^2$-norm and the density of $C^\infty_0$ in $L^2$ gives
%\eqref{p^0_n_pde},
%integrating by parts, and noting that $\phi=0$ on $\partial \Omega_F$,
\begin{align}
\| p^0_n \|_{\Omega_F}
&= \sup_{\phi \in \mathcal{E}} \int_{\Omega_F} \Delta \tilde{p}_n  \phi \, dx
= \sup_{\phi \in \mathcal{E}} \left\{ -\int_{\Omega_F} \nabla \tilde{p}_n \cdot \nabla \phi \, dx  \right\}\nonumber \\
&= \sup_{\phi \in \mathcal{E}}  \left\{\int_{\Omega_F} \tilde{p}_n \Delta \phi \, dx - \int_{\partial \Omega_F} \tilde{p}_n \frac{\partial \phi}{\partial n} \, dS \right\}\nonumber \\
%\end{align}
%By the Cauchy-Schwarz inequality then,
%\begin{equation*}
%\begin{split}
%\| p^0_n \|_{\Omega_F}
%& \leq \left\| \tilde{p}_n \right\|_{\Omega_F} + \sup_{\phi \in \mathcal{E}} \left| \int_{\partial \Omega_F} \tilde{p}_n \frac{\partial \phi}{\partial n} dS \right|  \nonumber\\
& \leq \left\| \tilde{p}_n \right\|_{\Omega_F} + \left\| \tilde{p}_n \right\|_{L^\infty(\partial \Omega_F)} \sup_{\phi \in \mathcal{E}} \int_{\partial \Omega_F} \left| \frac{\partial \phi}{\partial n} \right| dS.
%\end{split}
\end{align}
We next note that for any $\phi$ in the admissible class $\mathcal{E}$, one can find a sequence $\phi_k \in \mathcal{E}$ such that $\frac{\partial \phi_k}{\partial n} \geq 0$ and $\frac{\partial \phi_k}{\partial n} \rightarrow \left|\frac{\partial \phi}{\partial n}\right|$ on $\partial \Omega_F$ in $L^1(\partial \Omega_F)$.  Therefore, we can assume $\frac{\partial \phi}{\partial n} \geq 0$ without affecting the supremum. Therefore,
\begin{equation*}
\begin{split}
\| p^0_n \|_{\Omega_F} & \leq \left\| \tilde{p}_n \right\|_{\Omega_F} + \left\| \tilde{p}_n \right\|_{L^\infty(\partial \Omega_F)} \sup_{\phi \in \mathcal{E} \text{, } \frac{\partial \phi}{\partial n} \geq 0 } \int_{\partial \Omega_F} \frac{\partial \phi}{\partial n} dS \\
& =\left\| \tilde{p}_n \right\|_{\Omega_F} + \left\| \tilde{p}_n \right\|_{L^\infty(\partial \Omega_F)} \sup_{\phi \in \mathcal{E} \text{, } \frac{\partial \phi}{\partial n} \geq 0 } \int_{\Omega_F} \Delta \phi \, dx \\
& \leq \left\| \tilde{p}_n \right\|_{\Omega_F} + \sqrt{| \Omega_F|} \left\| \tilde{p}_n \right\|_{L^\infty(\partial \Omega_F)}.
\end{split}
\end{equation*}
Since $p^0_n\rightarrow p_0$ and $\tilde{p}_n  \rightarrow \tilde{p}$ respectively in
$L^2(\Omega_F)$ and $L^\infty(\partial\Omega_F)$, estimate \eqref{estimate_pde_p_L^2}
follows.
\end{proof}

\begin{theorem}
\label{thm_pde_p}
Let $\tilde{p} \in H^{1/2}\cap L^\infty(\Omega_F)$ and $p_0\in H^{1/2}(\Omega_F)$ be the solution of equation \eqref{pde_p}.
%, where $\Omega_F$ is smooth.
Then there exists a constant $C>0$ such that  \
%\begin{equation*}
$\| p_0 \|_{\Omega_F} \leq \sqrt{2} \| \tilde{p} + C \|_{\Omega_F}.
$ %\end{equation*}
\end{theorem}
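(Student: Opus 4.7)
The plan is to bootstrap from Lemma \ref{pde_p_lemma_1} by exploiting the invariance of $p_0$ under constant shifts of the extension, then choosing the shift $C$ to sharpen the constant on the right.

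The first observation is that the solution $p_0$ of \eqref{pde_p} is invariant under the transformation $\tilde p \mapsto \tilde p + C$ for any $C \in \mathbb{R}$: since $\Delta(\tilde p + C) = \Delta \tilde p$ and the homogeneous boundary condition on $p_0$ does not involve $\tilde p$, the PDE is unchanged. Applying Lemma \ref{pde_p_lemma_1} to $\tilde p + C$ in place of $\tilde p$ therefore yields
\[
\|p_0\|_{\Omega_F} \;\leq\; \|\tilde p + C\|_{\Omega_F} \;+\; \sqrt{|\Omega_F|}\,\|\tilde p + C\|_{L^\infty(\partial \Omega_F)}.
\]

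It then remains to choose $C>0$ so that the right-hand side is bounded by $\sqrt{2}\,\|\tilde p + C\|_{\Omega_F}$. Since $\|p_0\|_{\Omega_F}$ does not depend on $C$ while $\|\tilde p + C\|_{\Omega_F}$ scales like $|C|\sqrt{|\Omega_F|}$ for large $|C|$, existence of such a $C$ is clear. To obtain the sharper constant $\sqrt 2$ rather than $2$, one applies the inequality $a + b \leq \sqrt{2(a^2+b^2)}$ to the two terms on the right and selects $C$ to balance $\|\tilde p + C\|_{\Omega_F}^2$ against $|\Omega_F|\,\|\tilde p + C\|_{L^\infty(\partial\Omega_F)}^2$, exploiting the fact that adding a constant moves both norms in a coordinated way.

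The main obstacle is isolating the factor $\sqrt 2$ rather than a larger constant: this requires using the shift freedom in $C$ together with the specific structure of the extension $\tilde p$ constructed in Section \ref{p_interior_estimate_section} (where $\tilde p$ is localized to a thin neighborhood of $\partial \Omega_F$), so that for an appropriate choice of $C$ the boundary $L^\infty$-term is indeed controlled by the interior $L^2$-term. I expect the argument to culminate in picking $C$ explicitly in terms of $\|\tilde p\|_{L^\infty(\partial\Omega_F)}$ and the geometry of $\Omega_F$.
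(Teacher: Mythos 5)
Your opening observation (invariance of $p_0$ under $\tilde p\mapsto\tilde p+C$) is correct and is the same starting point as the paper. But the strategy you then set up has a genuine gap: you propose to apply Lemma \ref{pde_p_lemma_1} to the \emph{shifted} extension $\tilde p + C$ and then choose $C$ so that
\[
\|\tilde p + C\|_{\Omega_F} + \sqrt{|\Omega_F|}\,\|\tilde p + C\|_{L^\infty(\partial\Omega_F)} \;\leq\; \sqrt 2\,\|\tilde p + C\|_{\Omega_F},
\]
equivalently $\sqrt{|\Omega_F|}\,\|\tilde p + C\|_{L^\infty(\partial\Omega_F)} \leq (\sqrt 2 - 1)\,\|\tilde p + C\|_{\Omega_F}$. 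This cannot be achieved in general: adding a constant moves both sides the same way, and in fact for $|C|\to\infty$ one has $\|\tilde p + C\|_{L^\infty(\partial\Omega_F)}\sim|C|$ while $\|\tilde p + C\|_{\Omega_F}\sim|C|\sqrt{|\Omega_F|}$, so the ratio $\sqrt{|\Omega_F|}\,\|\tilde p+C\|_{L^\infty(\partial\Omega_F)}/\|\tilde p+C\|_{\Omega_F}$ tends to $1$, not to anything below $\sqrt 2 - 1\approx 0.41$. (A constant $\tilde p$ makes this ratio identically $1$.) Your appeal to scaling (``$\|\tilde p+C\|_{\Omega_F}\to\infty$, so existence of $C$ is clear'') actually proves the theorem's \emph{conclusion} directly, but it does \emph{not} establish the intermediate inequality you wrote, and it abandons the lemma entirely; the subsequent balancing heuristic with $a+b\leq\sqrt{2(a^2+b^2)}$ does not close this gap.

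The paper's argument is structured differently: it applies Lemma \ref{pde_p_lemma_1} to the \emph{unshifted} $\tilde p$, obtaining $\|p_0\|_{\Omega_F}\leq a+b$ with $a=\|\tilde p\|_{\Omega_F}$ and $b=\sqrt{|\Omega_F|}\|\tilde p\|_{L^\infty(\partial\Omega_F)}$. It then uses $(a+b)^2\leq 2(a^2+b^2)$ and, crucially, shows that the \emph{explicit} shift $C=3\|\tilde p\|_{L^\infty(\Omega_F)}$ makes the shifted $L^2$ norm large enough to dominate:
\[
\|\tilde p + C\|^2_{\Omega_F} = \int_{\Omega_F}\!\left(\tilde p^2 + 6\tilde p\,\|\tilde p\|_{L^\infty} + 9\|\tilde p\|^2_{L^\infty}\right)dx \;\geq\; \|\tilde p\|^2_{\Omega_F} + 3|\Omega_F|\,\|\tilde p\|^2_{L^\infty(\Omega_F)} \;\geq\; a^2 + b^2.
\]
The shift is used to manufacture slack on the $L^2$ side, not to kill the boundary $L^\infty$ term. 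This gives the sharp factor $\sqrt 2$ together with a concrete $C$, which is what the downstream renormalization of $p$ in Section \ref{p_interior_estimate_section} needs. Your proposal would need to be reorganized along these lines to work.
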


\begin{proof}
We can add an arbitrary constant to $\tilde{p}$ without changing \eqref{pde_p}, and hence its solution $
p_0$. We hence can  substitute  $\tilde{p}^+ := \tilde{p} + 3 \| \tilde{p} \|_{L^\infty(\Omega_F)}$ for $\tilde{p}$ in \eqref{estimate_pde_p_L^2} to obtain:
%Let $\tilde{p} \in H^{1/2}(\Omega_F)$.   Define $\tilde{p}^+ := \tilde{p} + 3 \| \tilde{p} \|_{L^\infty(\Omega_F)}$.  Then, we have that
%\begin{equation*}
%\left\{
%\begin{array}{lr}
%\Delta p_0 = \Delta \tilde{p}^+ & x \in \Omega_F \\
%p_0 = 0 & x \in \partial \Omega_F
%\end{array}
%\right.
%\end{equation*}
%and, noting equation \eqref{estimate_pde_p_L^2}, we have that
\begin{align}
\| \tilde{p}^+ \|^2_{\Omega_F} =&
%\int_{\Omega_F} \left( \tilde{p}+ 3 \| \tilde{p} \|_{L^\infty(\Omega_F)} \right)^2 \, dx \\
%\left\| \tilde{p}+ 3 \| \tilde{p} \|_{L^\infty(\Omega_F)}
%\right\|^2_{L^2(\Omega_F)} \\
%= &
\int_{\Omega_F} \left( \tilde{p}^2+ 6 \tilde{p} \| \tilde{p} \|_{L^\infty(\Omega_F)} + 9 \| \tilde{p} \|_{L^\infty(\Omega_F)}^2 \right) \, dx %\nonumber \\
%\geq & \| \tilde{p} \|^2_{L^2(\Omega_F)} - 6 \left|\Omega_F\right| \| \tilde{p} \|^2_{L^\infty(\Omega_F)} + 9 \left|\Omega_F\right| \| \tilde{p} \|^2_{L^\infty(\Omega_F)} \nonumber \\
\geq \| \tilde{p} \|^2_{\Omega_F} +3 \left|\Omega_F\right| \| \tilde{p} \|^2_{L^\infty(\Omega_F)} \nonumber \\
\geq & \frac{1}{2} \left( \left\| \tilde{p} \right\|_{\Omega_F} + \sqrt{| \Omega_F|} \left\| \tilde{p} \right\|_{L^\infty(\partial \Omega_F)} \right)^2
\geq \frac{1}{2} \left\| p_0 \right\|^2_{\Omega_F}. \nonumber
\end{align}

\end{proof}

In order to prove Theorem \ref{thm_extension}, we will need the following Lemmas.
We include  proofs for the reader's sake.

\begin{lemma}
\label{extension_lemma_1}
Let $p\in H^{1/2}(\partial B(0,R))$ and let $\tilde{p}^I\in H^{1}(B(0,R))$ solve
\begin{equation}
\label{p_I_pde}
\left\{
\begin{array}{lr}
\Delta \tilde{p}^I = 0 & x\in B(0,R) \\
\tilde{p}^I = p & x \in \partial B(0,R).
\end{array}
\right.
\end{equation}
Then
%\begin{equation*}
$\| \tilde{p}^I \|_{B(0,R)} \leq \sqrt{\frac{R}{3}} \| p \|_{\partial B(0,R)}
$ %\end{equation*}
and
%\begin{equation*}
$\| \nabla \tilde{p}^I \|_{B(0,R)} \leq | p |_{H^{1/2} (\partial B(0,R))}.
$ %\end{equation*}
\end{lemma}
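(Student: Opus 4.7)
The plan is to diagonalize the problem via spherical harmonics, for which the interior harmonic extension of a function on a sphere of radius $R$ admits an explicit series representation. Write the boundary datum as
$$
p(R,\theta,\varphi) = \sum_{\ell \geq 0}\sum_{m=-\ell}^{\ell} a_{\ell m}\, Y_\ell^m(\theta,\varphi),
$$
where $\{Y_\ell^m\}$ is the standard orthonormal basis of $L^2$ of the unit sphere. Then the unique solution of \eqref{p_I_pde} in $H^1(B(0,R))$ is given by
$$
\tilde{p}^I(r,\theta,\varphi) = \sum_{\ell,m} a_{\ell m}\,(r/R)^{\ell}\, Y_\ell^m(\theta,\varphi),
$$
since the radial factors $(r/R)^\ell$ are the bounded solutions of the associated ODE after separation of variables, and the series converges in $H^1(B(0,R))$ when $p \in H^{1/2}(\partial B(0,R))$.

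For the first estimate, I would use the surface measure identity $dS_R = R^2\, d\Omega$ to record $\|p\|_{\partial B(0,R)}^2 = R^2 \sum_{\ell,m} |a_{\ell m}|^2$, and then compute in polar coordinates
$$
\|\tilde{p}^I\|_{B(0,R)}^2 = \sum_{\ell,m} |a_{\ell m}|^2 \int_0^R (r/R)^{2\ell}\, r^2\, dr = \sum_{\ell,m} \frac{R^3}{2\ell+3}\,|a_{\ell m}|^2.
$$
Since $1/(2\ell+3) \leq 1/3$ for all $\ell \geq 0$, this sum is bounded by $(R^3/3)\sum|a_{\ell m}|^2 = (R/3)\|p\|_{\partial B(0,R)}^2$, giving the claimed $\sqrt{R/3}$ constant. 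The sharpness at $\ell=0$ explains why no smaller factor can appear without further hypotheses on $p$.

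For the gradient estimate, I would first apply Green's identity together with $\Delta \tilde{p}^I = 0$ to get
$$
\|\nabla \tilde{p}^I\|_{B(0,R)}^2 = \int_{\partial B(0,R)} p\, \frac{\partial \tilde{p}^I}{\partial n}\, dS.
$$
Differentiating the series term by term yields $\partial_r \tilde{p}^I\big|_{r=R} = \sum_{\ell,m} (\ell/R)\, a_{\ell m}\, Y_\ell^m$, whence orthonormality gives $\|\nabla \tilde{p}^I\|_{B(0,R)}^2 = R\sum_{\ell,m} \ell\, |a_{\ell m}|^2$. I would then identify this with (or bound this by) the intrinsic $H^{1/2}(\partial B(0,R))$ seminorm, which admits the spectral representation $|p|_{H^{1/2}(\partial B(0,R))}^2 = R\sum_{\ell,m}\sqrt{\ell(\ell+1)}\,|a_{\ell m}|^2$ via the Laplace--Beltrami eigenvalues $\ell(\ell+1)/R^2$ on the sphere of radius $R$. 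The elementary inequality $\ell \leq \sqrt{\ell(\ell+1)}$ then closes the estimate.

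The only subtlety — and the place where I would be most careful — is tracking the various powers of $R$ introduced by the surface measure and the Laplace--Beltrami eigenvalues, and aligning them with the authors' convention for the $H^{1/2}$ seminorm on $\partial B(0,R)$ (rather than on the unit sphere). Everything else is a direct consequence of Parseval in the spherical harmonic basis, so no nontrivial analytic obstacle arises.
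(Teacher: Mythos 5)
Your proof is correct and follows essentially the same route as the paper's: expand in spherical harmonics, read off the explicit harmonic extension $\sum a_{\ell m}(r/R)^\ell Y_\ell^m$, compute $\|\tilde p^I\|^2$ and $\|p\|^2$ via Parseval, use Green's identity for $\|\nabla\tilde p^I\|^2$, and close with the elementary inequalities $1/(2\ell+3)\leq 1/3$ and $\ell \leq \sqrt{\ell(\ell+1)}$. In fact your bookkeeping of the powers of $R$ (giving $\|\nabla\tilde p^I\|^2_{B(0,R)} = R\sum \ell|a_{\ell m}|^2$) is cleaner than the formula as printed in the paper, which contains a stray $R^{-1}$; the only thing you gloss over slightly is the density step of first taking $p$ a finite linear combination and then passing to the limit, which the paper makes explicit.
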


\begin{proof}
We first recall that for any $s\geq 0$, linear combinations of spherical harmonics are dense in $H^s(S^2)$, where $S^2 = \partial B(0,1)$ is the unit sphere,  and that an equivalent norm is given by
\begin{equation} \label{harmonics1}
   \|f\|_{H^s(S^2)}  = \left ( \sum_{n=0}^\infty \sum_{k=-n}^n \lambda_n^{2s} |G_{nk}|^2\right )^{1/2},
\end{equation}
where $\lambda_n = n\,(n+1)$, and $A_{nk}$ is the $L^2$ projection of $f$ onto $Y^k_n$, a spherical harmonic of degree $n$ (we refer, for instance, to \cite{lions}).
%$\{Y^n_k\}$ for an orthonormal basis for $L^2(S^2)$.
By rescaling  a similar expansion is valid for $p\in L^2(\partial B(0,R)$ for any fixed $R>0$.
Moreover, if we set  $p^R(\omega) := p(R\, \omega)$, $\omega \in S^2$,
\begin{equation} \label{harmonics2}
    \|p\|_{H^s(\partial B(0,R))}  = R^{1-s} \, \|p^R\|_{H^s(S^2)}.
\end{equation}

By density, we can assume therefore that  $p$ is a finite linear combination of spherical harmonics:  \ $ p =\sum_{n=0}^N \sum_{k=-n}^n   G_{nk}(R)\,  Y_{n,k}$.  The solution $p^I$ of the Dirichlet problem \eqref{p_I_pde} can then be written (see for example \cite[Theorem 2.10, p. 145]{SteinWeiss}), using the homogeneity of the Laplace operator, as  \
$\tilde{p}^I = \sum_{n=0}^N  \sum_{k=-n}^n R^{-n}\, G_{nk}(R) \,  r^n Y_n^k$, which is in fact an harmonic polynomial in the whole of $\mathbb{R}^3$.

Next, we observe that, if $(r, \theta,\phi)$ are spherical coordinates, then
\[
  \partial_n \tilde{p}^I = \partial_r \tilde{p}^I = \sum_{n=0}^N  \sum_{k=-n}^n R^{-n}\, G_{nk}(R) \,  n r^{n-1} Y_n^k
\]
where $n$ is the unit outer normal to  $\partial B(0,R)$.
%We can expand $\tilde{p}^I$ and $p$ using spherical harmonics:
%\begin{equation}
%\label{p_series}
%\begin{array}{cc}
%\displaystyle
%\tilde{p}^I = \sum_{l=0}^\infty \sum_{m=-l}^l A_{lm} r^l Y_l^m(\theta,\phi), & \displaystyle
%p = \sum_{l=0}^\infty \sum_{m=-l}^l G_{lm} Y_l^m(\theta,\phi).
%\end{array}
%\end{equation}
%In order to satisfy the boundary conditions in equation \eqref{p_I_pde}, we must have that $A_{lm} = G_{lm} R^{-l}$.
%Noting that
%\begin{equation}
%\begin{split}
%\left\| \nabla \tilde{p}^I \right\|^2_{(L^2(B(0,R)))^3} %&= \int_{B(0,R)} \nabla \tilde{p}^I \cdot \nabla \tilde{p}^I \, dx \\
%%&= -\int_{B(0,R)} \Delta \tilde{p}^I \tilde{p}^I \, dx + \int_{\partial B(0,R)} \frac{\partial \tilde{p}^I}{\partial n} \tilde{p}^I dS \\
%&=\int_{\partial B(0,R)} \frac{\partial \tilde{p}^I}{\partial n} \tilde{p}^I dS,
%\end{split}
%\end{equation}
%Since $\Delta \tilde{p}^I=0$, we have that
Integrating by parts and by the orthonormality of the spherical harmonics, we hence have:
\begin{equation}
\label{tilde_p_I_H^1_bdy}
\begin{split}
\big\| \nabla \tilde{p}^I & \big\|^2_{B(0,R)} = \int_{\partial B(0,R)} \frac{\partial \tilde{p}^I}{\partial n} \, \overline{\tilde{p}^I} \, dS
 = \int_{S^2} \left(
%\sum_{l=0}^\infty \sum_{m=-l}^l
\sum_{l = 0}^N \sum_{m=-l}^l  G_{lm}(R) Y_l^m \right) \cdot \\
%\left(
%\sum_{n=0}^\infty \sum_{q=-n}^n
&\cdot  \left( \sum_{n = 0}^N \sum_{q=-n}^n \overline{G_{nq}(R)} R^{-n} n R^{n-1} \overline{Y_n^q}  \right) \,R^2\, dS
%\intertext{which, by the monotone convergence theorem, equals}
= %\sum_{l=0}^\infty \sum_{m=-l}^l \sum_{n=0}^\infty \sum_{q=-n}^n
\sum_{n=0}^N \sum_{q=-n}^n n R^{-1}\, |G_{nq}(R)|^2.
\end{split}
\end{equation}
%Without loss of generality, assume the spherical harmonics are normalized such that $\| Y_l^m(\theta,\phi) \|^2_{S^2}=1$. By the orthogonality of the spherical harmonics on the sphere, we get that
%\begin{equation}
%\label{nabla_tilde_p_I_L^2}
%\left\| \nabla \tilde{p}^I  \right\|^2_{B(0,R)}
%= \sum_{l=0}^\infty \sum_{m=-l}^l G^2_{lm} l R .
%\end{equation}
Additionally, from \eqref{harmonics1}--\eqref{harmonics2},
%\begin{equation}
%\label{p_L^2_estimate}
%\begin{array}{cc}
%\displaystyle
\ $\left\| \tilde{p}^I \right\|^2_{B(0,R)}
= \sum_{l=0}^N \sum_{m=-l}^l |G_{lm}(R)|^2  \frac{R^3}{3+2l}$, \
%&
%\displaystyle
$\left\| p \right\|^2_{\partial B(0,R)} = \sum_{l=0}^N \sum_{m=-l}^l |G_{lm}(R)|^2 \, R^2$, \
%\end{array}
%\end{equation}
and \quad
%\begin{equation}
%\label{p_H_1/2_estimate}
$\left| p \right|^2_{H^{1/2}(\partial B(0,R))} = \newline
%\sum_{l=0}^N \sum_{m=-l}^l G^2_{lm} \lambda_l^{1/2} R^2 =
\sum_{l=0}^\infty \sum_{m=-l}^l G^2_{lm} \sqrt{l(l+1)} R$.
%\end{equation}
%where $\lambda_l = \frac{l(l+1)}{R^2}$ are the eigenvalues of the Laplacian on $\partial B(0,R)$.
\ Since $l/\sqrt{ l (l+1)} \leq 1$ for $l \geq 0$, we get the desired estimate for $p$ a finite sum of spherical harmonics. The estimate for any given $p \in H^{1/2}(\partial B(0,R))$ then follows by density, using the continuity in Sobolev spaces of solutions to \eqref{p_I_pde}.
\end{proof}

\begin{lemma}
\label{extension_lemma_2}
Let $p\in H^{-1/2}(\partial B(0,R))$ and let $\tilde{p}^I$ solve equation \eqref{p_I_pde}.
%\begin{equation*}
%\left\{
%\begin{array}{lr}
%\Delta \tilde{p}^I = 0 & x\in B(0,R) \\
%\tilde{p}^I = p & x \in \partial B(0,R).
%\end{array}
%\right.
%\end{equation*}
Then $\tilde{p}^I \in  L^2(B(0,R))$ and $\| \tilde{p}^I \|_{B(0,R)} \leq \sqrt{\frac{1}{2}} \| p \|_{H^{-1/2}(\partial B(0,R))}$.
\end{lemma}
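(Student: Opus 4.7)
The plan is to follow the same spherical-harmonics template used in Lemma \ref{extension_lemma_1}, reducing the estimate to a coefficient-by-coefficient comparison on the modes of $p$. By the density of finite linear combinations of spherical harmonics in $H^{-1/2}(\partial B(0,R))$ and the continuity of the Dirichlet-to-solution map (which is standard and can be obtained as a limit of smooth data), it is enough to establish the inequality when $p = \sum_{n=0}^N \sum_{k=-n}^n G_{nk}(R)\, Y_n^k$ is a finite sum. The unique harmonic extension of $p$ that is regular at the origin is then given explicitly, via the homogeneity of $\Delta$ on homogeneous polynomials, by
\[
\tilde{p}^I(r,\omega) = \sum_{n=0}^N \sum_{k=-n}^n G_{nk}(R)\, \bigl(r/R\bigr)^n \, Y_n^k(\omega).
\]

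Next, I would compute $\|\tilde{p}^I\|_{B(0,R)}^2$ directly, using the orthonormality of $\{Y_n^k\}$ on $S^2$ to separate radial and angular integrations. The radial integral $\int_0^R (r/R)^{2n} r^2\, dr = R^3/(2n+3)$ yields the clean diagonal form
\[
\|\tilde{p}^I\|_{B(0,R)}^2 = \sum_{n,k} \frac{R^3}{2n+3}\, |G_{nk}(R)|^2.
\]
On the boundary side, combining \eqref{harmonics1} with the scaling relation \eqref{harmonics2} (applied with $s=-1/2$, so that $\|p\|_{H^{-1/2}(\partial B(0,R))}^2 = R^3 \|p^R\|^2_{H^{-1/2}(S^2)}$) expresses $\|p\|^2_{H^{-1/2}(\partial B(0,R))}$ as a weighted $\ell^2$-sum of $|G_{nk}|^2$ with weights of order $(n+1/2)^{-1}$ (equivalently, controlled from below by $R^3/(n+1/2)$).

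The proof then reduces to the elementary mode-wise inequality
\[
\frac{R^3}{2n+3} \;=\; \frac{1}{2}\cdot\frac{R^3}{n+3/2} \;\leq\; \frac{1}{2}\cdot\frac{R^3}{n+1/2},\qquad n\geq 0,
\]
whose square-root sum is exactly the claimed estimate with constant $\sqrt{1/2}$. Taking a supremum over finite-dimensional truncations and then passing to the limit as $N\to\infty$ (with density of polynomials in $H^{-1/2}$ and the continuity of $p\mapsto \tilde{p}^I$ from $H^{-1/2}(\partial B(0,R))$ into $L^2(B(0,R))$ along smooth approximations) completes the proof for general $p\in H^{-1/2}(\partial B(0,R))$.

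The only delicate point I anticipate is bookkeeping the precise normalization of the $H^{-1/2}(\partial B(0,R))$ seminorm so that the $n=0$ (constant) spherical-harmonic mode is handled correctly — the seminorm convention used in Lemma \ref{extension_lemma_1} must be adjusted to a full norm for negative order, and the weights $(n+1/2)^{-1}$ (or a comparable expression) must actually dominate $2R^3/(2n+3)$ at $n=0$. Everything else is a clean analogue of the $H^{1/2}$ argument and should follow verbatim from the computations already carried out in the preceding Lemma.
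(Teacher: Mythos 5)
Your proposal is correct and follows essentially the same route as the paper's (unprinted) argument: expand $p$ in spherical harmonics, compute $\|\tilde p^I\|_{B(0,R)}^2$ diagonally using the radial integral $\int_0^R (r/R)^{2n}r^2\,dr = R^3/(2n+3)$, and compare mode-by-mode with the $H^{-1/2}$ weight, which the paper takes to be $R^3/\sqrt{n(n+1)}$ and reduces to $(2n+3)/\sqrt{n(n+1)}\geq 2$. Your caution about the $n=0$ mode is well placed — the paper's choice $\lambda_n^{-1/2}=(n(n+1))^{-1/2}$ degenerates there, so replacing it with a genuine norm weight such as $(n+1/2)^{-1}$ (which still dominates $2/(2n+3)$ for all $n\geq 0$) is the cleaner formulation.
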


%\begin{proof}
The proof of the Lemma is very similar to that of Lemma \ref{extension_lemma_1}.
\begin{comment}
%The proof follows from Lemma \ref{extension_lemma_1} using a duality argument and the self-adjointness of the Laplacian in $L^2(B(0,R))$.
Standard regularity results tell us that $\tilde{p}^I\in L^2(B(0,R))$.
Therefore, we can represent $\tilde{p}^I$ using spherical harmonics:
\begin{equation*}
\tilde{p}^I = \sum_{n=0}^\infty \sum_{k=-n}^n A_{kn} r^n Y^k_n.
\end{equation*}
Let $\tilde{p}^I_N := \sum_{n=0}^N \sum_{k=-n}^n A_{kn} r^n Y^k_n$.  Then,
\begin{align}
\| \tilde{p}^I_N \|^2_{L^2(B(0,R))} &= \sum_{n=0}^N \sum_{k=-n}^n |A_{kn}|^2 \frac{R^{3+2n}}{3+2n}  \label{tilde_p_I_L^2} \\
\intertext{and (see, for example, \cite{lions})}
| \tilde{p}^I_N |^2_{H^{-1/2}(\partial B(0,R))} &= \sum_{n=0}^N \sum_{k=-n}^n \frac{|A_{kn}|^2}{\lambda_n^{1/2}} R^{3+2n}  \nonumber \\
&= \sum_{n=0}^N \sum_{k=-n}^n \frac{|A_{kn}|^2}{\sqrt{n(n+1)}} R^{3+2n}.\label{p_tilde_I_H_-1/2}
\end{align}
Noting that $\frac{3+2n}{\sqrt{n(n+1)}} \geq 2$, we get that $\| \tilde{p}^I_N \|_{B(0,R)} \leq \sqrt{\frac{1}{2}} | \tilde{p}^I_N |_{H^{-1/2}(\partial B(0,R))}$ for all $N$.  Since $\tilde{p}^I_N\rightarrow \tilde{p}^I$ as $N\rightarrow \infty$ in $L^2(B(0,R))$ and, by the trace theorem, also in $H^{-1/2}(\partial B(0,R))$, we get that $\| \tilde{p}^I \|_{B(0,R)} \leq \sqrt{\frac{1}{2}} | \tilde{p}^I |_{H^{-1/2}(\partial B(0,R))}$.  The desired result follows by noting that $| \tilde{p}^I |^2_{H^{-1/2}(\partial B(0,R))} = |p|^2_{H^{-1/2}(\partial B(0,R))}$.
\end{comment}
%\end{proof}

\begin{lemma}
\label{extension_lemma_3}
Let $p\in L^2(\partial B(0,R))$ and let $\tilde{p}^I\in H^{1/2} (B(0,R))$ solve equation \eqref{p_I_pde}.
%\begin{equation*}
%\left\{
%\begin{array}{lr}
%\Delta \tilde{p}^I = 0 & x\in B(0,R) \\
%\tilde{p}^I = p & x \in \partial B(0,R).
%\end{array}
%\right.
%\end{equation*}
Then, $\| \tilde{p}^I \|_{H^{1/2}(B(0,R))} \leq \left( \frac{1}{2} \right)^{\frac{1}{4}} \left(1+\frac{R}{3} \right)^{\frac{1}{4}} \| p \|_{\partial B(0,R)}$.
\end{lemma}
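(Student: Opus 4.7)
The plan is to derive the bound by complex interpolation of the harmonic extension operator $E\colon p\mapsto\tilde p^I$, using Lemmas \ref{extension_lemma_1} and \ref{extension_lemma_2} as endpoint estimates. The key observation is that the first lemma controls $E$ as an operator from $H^{1/2}(\partial B(0,R))$ to $H^1(B(0,R))$, and the second as an operator from $H^{-1/2}(\partial B(0,R))$ to $L^2(B(0,R))$; since the interpolation spaces at $\theta=1/2$ between these two pairs are $L^2(\partial B(0,R))$ on the source side and $H^{1/2}(B(0,R))$ on the target side, interpolating $E$ yields an estimate with precisely the claimed exponent $1/4$ and constant.

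First I would repackage the two inequalities in Lemma \ref{extension_lemma_1} into a single bound on the graph norm. Using the splitting $\|p\|_{H^{1/2}(\partial B(0,R))}^2=\|p\|_{\partial B(0,R)}^2+|p|_{H^{1/2}(\partial B(0,R))}^2$ together with both parts of that lemma, one obtains
\[
\|\tilde p^I\|_{H^1(B(0,R))}^2 \le \frac{R}{3}\|p\|_{\partial B(0,R)}^2+|p|_{H^{1/2}(\partial B(0,R))}^2 \le \Bigl(1+\frac{R}{3}\Bigr)\|p\|_{H^{1/2}(\partial B(0,R))}^2,
\]
so $\|E\|_{H^{1/2}(\partial B(0,R))\to H^1(B(0,R))}\le\sqrt{1+R/3}$. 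Lemma \ref{extension_lemma_2} supplies the matching lower endpoint $\|E\|_{H^{-1/2}(\partial B(0,R))\to L^2(B(0,R))}\le\sqrt{1/2}$.

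Next I would invoke the interpolation identities $[H^{-1/2}(\partial B(0,R)),H^{1/2}(\partial B(0,R))]_{1/2}=L^2(\partial B(0,R))$, which follows from the spectral characterization of boundary Sobolev norms via spherical harmonics already used in the preceding lemmas, and $[L^2(B(0,R)),H^1(B(0,R))]_{1/2}=H^{1/2}(B(0,R))$, which is standard for Lipschitz domains (see, e.g., \cite{lions}). The complex interpolation theorem applied to the bounded linear operator $E$ then yields
\[
\|E\|_{L^2(\partial B(0,R))\to H^{1/2}(B(0,R))} \le \Bigl(\frac{1}{2}\Bigr)^{1/4}\Bigl(1+\frac{R}{3}\Bigr)^{1/4},
\]
which is precisely the claimed estimate when evaluated at $p\in L^2(\partial B(0,R))$.

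The main obstacle is ensuring that the $H^{-1/2}(\partial B(0,R))$ norm used in Lemma \ref{extension_lemma_2} is compatible with a convention that makes the first interpolation identity hold with unit constant. This requires minor care with the degree-zero spherical harmonic, which can be handled either by replacing the weight $\sqrt{n(n+1)}$ with $\sqrt{1+n(n+1)}$ or by treating the $n=0$ mode separately via the compatibility condition on the boundary data; once the conventions are aligned, the complex interpolation constants combine as $M_0^{1/2}M_1^{1/2}$ to yield the stated bound.
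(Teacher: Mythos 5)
Your proposal matches the paper's proof exactly: the paper simply states that the result ``follows by interpolating the estimates in Lemmas \ref{extension_lemma_1} and \ref{extension_lemma_2} (see e.g.\ Theorem A.1.1 in [DaPrato])'', which is precisely the complex-interpolation argument you spell out, with the two endpoint operator norms $\sqrt{1+R/3}$ and $\sqrt{1/2}$ combining at $\theta=1/2$ to give $(1/2)^{1/4}(1+R/3)^{1/4}$. Your observation about the degree-zero spherical harmonic and the $\sqrt{n(n+1)}$ weight in the $H^{-1/2}$ norm is a genuine subtlety that the paper glosses over; your suggested fixes (shift the weight or handle the constant mode via the mean-zero normalization of the pressure) are both sound.
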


\begin{proof}
The result follows by interpolating the estimates in  Lemmas \ref{extension_lemma_1} and \ref{extension_lemma_2} (see e.g. Theorem A.1.1 in \cite{DaPrato} ).
%for the interpolation
%and \cite{lions} again for the equivalence of $H^{1/2}(B(0,R))$ and $L^2(\partial B(0,R))$ with the resulting spaces).
\end{proof}

\begin{lemma}
\label{extension_lemma_6}
Let $p\in L^2(\partial B(0,a))$.  Then there exists an extension $\tilde{p}^E\in H^{1/2}(B^c(0,a)\cap B(0,d))$, where $d>a$, such that \ $\tilde{p}^E \big|_{\partial B(0,a)} = p$
satisfying \quad
$
\| \tilde{p}^E \|_{H^{1/2}(B^c(0,a)\cap B(0,d))} \leq 2^{\frac{1}{8}}\left(d+\frac{1}{a}+\sqrt{2}\right)^{\frac{1}{4}} \| p \|_{\partial B(0,a)}.
$

\end{lemma}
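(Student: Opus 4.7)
My plan is to mirror the strategy used to establish Lemmas \ref{extension_lemma_1}--\ref{extension_lemma_3}, replacing the interior harmonic extension by its exterior analog and then interpolating. Expanding the boundary datum in spherical harmonics, $p(a\omega) = \sum_{n,k} A_{nk} Y_n^k(\omega)$, I would define the exterior harmonic extension
\[
  \tilde{p}^E(r\omega) := \sum_{n,k} A_{nk}\, \left(\frac{a}{r}\right)^{n+1} Y_n^k(\omega), \qquad a \leq r \leq d,
\]
restricted to the annulus $B^c(0,a)\cap B(0,d)$. By construction $\tilde{p}^E = p$ on $\partial B(0,a)$ and $\tilde{p}^E$ is harmonic; by density it suffices to argue for $p$ a finite linear combination of spherical harmonics, and then to pass to the limit using the continuity of the extension operator in Sobolev spaces.

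Next I would establish two companion estimates by explicit series computation, analogous to Lemmas \ref{extension_lemma_1} and \ref{extension_lemma_2}. Integrating term by term and using orthonormality of the $Y_n^k$, the $L^2$ and $H^1$ norms of $\tilde{p}^E$ on the annulus reduce respectively to sums of the radial integrals $a^{2n+2}\int_a^d r^{-2n}\,dr$ and $(n+1)(2n+1)\,a^{2n+2}\int_a^d r^{-2n-2}\,dr$. The mode $n=0$ in the first contributes the factor $d$ (coming from $\int_a^d r^2\,dr$), while the mode $n=1$ in the second contributes the factor $1/a$ (coming from $a^4\int_a^d r^{-2}\,dr \sim a^3$ once multiplied by $(n+1)(2n+1)$ and then divided by the $H^{1/2}$-weight $\sqrt{n(n+1)} \sim \sqrt{2}$ at $n=1$). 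The modes $n \geq 2$ give convergent tails controlled by the $H^{1/2}$-weights, which after a direct comparison of numerical constants can be absorbed into an additive $\sqrt{2}$. The resulting bound is $\|\tilde{p}^E\|_{H^1(\text{annulus})} \leq \sqrt{d + 1/a + \sqrt{2}}\,\|p\|_{H^{1/2}(\partial B(0,a))}$. In parallel, following the pattern of Lemma \ref{extension_lemma_2}, comparing $\|\tilde{p}^E\|_{L^2(\text{annulus})}^2$ term-by-term against the $H^{-1/2}$-coefficient weights produces $\|\tilde{p}^E\|_{L^2(\text{annulus})} \leq \sqrt{1/2}\,\|p\|_{H^{-1/2}(\partial B(0,a))}$.

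Finally, I would interpolate these two bounds at $\theta = 1/2$. Using the standard interpolation identities $[H^{-1/2}(\partial B(0,a)), H^{1/2}(\partial B(0,a))]_{1/2} = L^2(\partial B(0,a))$ and $[L^2(\text{annulus}), H^1(\text{annulus})]_{1/2} = H^{1/2}(\text{annulus})$, together with the same interpolation tool invoked in the proof of Lemma \ref{extension_lemma_3} (Theorem A.1.1 in \cite{DaPrato}), the geometric mean of the two operator norms yields
\[
  \|\tilde{p}^E\|_{H^{1/2}(B^c(0,a)\cap B(0,d))} \leq C_{\mathrm{int}}\,(1/2)^{1/4}\,(d + 1/a + \sqrt{2})^{1/4}\,\|p\|_{L^2(\partial B(0,a))},
\]
and after chasing the interpolation constant through the homogeneous/inhomogeneous norm conventions one obtains the claimed prefactor $2^{1/8}$.

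The main obstacle I anticipate is the careful spherical-harmonic bookkeeping needed to produce precisely the combination $d + 1/a + \sqrt{2}$ rather than a looser bound such as $C\max(d,1/a)$: one must isolate the $n=0$ contribution (responsible for the $d$), the $n=1$ contribution (responsible for the $1/a$), and then sum the tail $n\geq 2$ using sharp inequalities of the form $\sqrt{(n+1)/n} \leq \sqrt{2}$ to absorb it into the additive constant. A secondary subtlety is tracking the exact interpolation constant so that the final prefactor is exactly $2^{1/8}$ and not some other power of two, since this depends on whether $H^{\pm 1/2}(\partial B(0,a))$ is normalized homogeneously or inhomogeneously in the identifications above. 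Beyond this constant-chasing I do not foresee any conceptual difficulty, since the overall architecture is parallel to the interior case already treated.
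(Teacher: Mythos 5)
Your overall plan --- build the exterior harmonic extension $\tilde{p}^E = \sum A_{nk}(a/r)^{n+1}Y_n^k$ on the annulus, establish $H^{-1/2}\to L^2$ and $H^{1/2}\to H^1$ bounds by orthogonality of spherical harmonics, and interpolate at $\theta = 1/2$ as in Lemma~\ref{extension_lemma_3} --- is exactly what the paper has in mind, since its entire proof is the sentence ``similar to the combined proofs of Lemmas \ref{extension_lemma_1}--\ref{extension_lemma_3}.'' So at the level of strategy you are on target.

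However, your accounting of where the three terms in $d + 1/a + \sqrt{2}$ come from is off in a way that matters. First, a small slip: the $n=1$ gradient integrand is $(n+1)(2n+1)\,a^{2n+2}\,r^{-2n-2} = 6a^4 r^{-4}$, not $a^4 r^{-2}$; integrating it gives a quantity of order $a$, not $a^3$. Second, and more substantively, the $1/a$ is not produced by $n=1$ at all. The genuinely new phenomenon compared to the interior Lemma \ref{extension_lemma_1} occurs at $n=0$: the interior extension has $r^0 Y_0^0 = \mathrm{const}$, so its gradient vanishes and the $n=0$ mode never appears in the gradient estimate, which is why Lemma \ref{extension_lemma_1} can bound $\|\nabla\tilde p^I\|$ by the $H^{1/2}$ \emph{semi}norm alone (the paper's weight $\lambda_n = n(n+1)$ is $0$ at $n=0$). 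The exterior $n=0$ mode is $(a/r)Y_0^0$, which is not constant; its gradient contribution to $\|\nabla\tilde p^E\|^2_{L^2(\mathrm{annulus})}$ is of order $a\,|A_{00}|^2$, whereas $\|p\|^2_{L^2(\partial B(0,a))}$ carries $a^2|A_{00}|^2$ for that mode --- a ratio of $1/a$. That is where the $1/a$ in the constant must originate, and it also means the $n=0$ gradient term must be controlled by the boundary $L^2$ norm rather than by the $H^{1/2}$ seminorm, an issue your proposal does not address (and which has no counterpart in Lemmas \ref{extension_lemma_1}--\ref{extension_lemma_3}). Finally, the assertion that ``chasing the interpolation constant through the homogeneous/inhomogeneous norm conventions'' converts $(1/2)^{1/4}$ into $2^{1/8}$ is not an argument: you would need to state the interpolation inequality you are using and track the constant explicitly, as the gap between $2^{-1/4}$ and $2^{1/8}$ is a factor of $2^{3/8}$ and cannot be waved away. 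In short, the architecture is right, but the mode-by-mode bookkeeping that produces the specific constant $2^{1/8}(d + 1/a + \sqrt 2)^{1/4}$ is not correct as written and the $n=0$ subtlety must be confronted.
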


\begin{proof}
The proof is similar to the combined proofs of Lemmas \ref{extension_lemma_1}--\ref{extension_lemma_3}.
%The result follows by interpolating the results from Lemmas \ref{extension_lemma_4} and \ref{extension_lemma_5} (see Theorem A.1.1 in \cite{DaPrato} for the interpolation and \cite{lions} for the equivalence of $H^{1/2}(B^c(0,a)\cap B(0,d))$ and $L^2(\partial B(0,d))$ with the resulting spaces).
\end{proof}

To obtain an  appropriate extension of $p$  in  $\Omega_F$ from Lemmas \ref{extension_lemma_1} -- \ref{extension_lemma_6}, we introduce the cutoff functions
\begin{align}
\zeta^E (r) &:= \left\{
\begin{array}{lr}
e \text{exp} \left[ -\frac{1}{1-\left(\frac{r-a}{d-a}\right)^2} \right], &  a \leq r \leq d, \label{zeta^E_def} \\
0 & r > d,
\end{array}
\right. \\
\zeta^I (r) &:= \left\{
\begin{array}{lr}
e \text{exp} \left[ -\frac{1}{1-\left(\frac{R-r}{R-d}\right)^2} \right], & d \leq r \leq R, \label{zeta^I_def} \\
0 & r < d.
\end{array}
\right.
\end{align}
One can easily calculate that
\begin{equation} \label{zeta_E_L^infty}
\begin{aligned}
&\| \zeta^E \|_{L^\infty( B^c(0,a))}  \leq 1 , \quad
&\| \nabla \zeta^E \|_{L^\infty( B^c(0,a))}  \leq \frac{3}{d}, \\
&\| \zeta^I \|_{L^\infty( B(0,R))}  \leq 1,   \quad
&\| \nabla \zeta^I \|_{L^\infty( B(0,R))}  \leq \frac{3}{d}.
\end{aligned}
\end{equation}

The next two results follow easily from \eqref{zeta_E_L^infty}.

\begin{lemma}
\label{multiplication_by_zeta_E_lemma}
Let $V(a,R):=B^c(0,a)\cap B(0,R)$ with $a<d<R$ and $\tilde{p}^E \in H^{1/2}(V(a,R))$.  Then $\tilde{p}^E \zeta^E \in H^{1/2}(B^c(0,a))$ with
\begin{equation*}
\| \tilde{p}^E \zeta^E \|_{H^{1/2}(B^c(0,a))} \leq \left(1+\frac{3}{d}\right)^{\frac{1}{2}} \|\tilde{p}^E \|_{H^{1/2}(V(a,R))}.
\end{equation*}
\end{lemma}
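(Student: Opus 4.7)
\textbf{Proof proposal for Lemma \ref{multiplication_by_zeta_E_lemma}.} The strategy is the standard one: prove endpoint bounds on the multiplication operator $f \mapsto \zeta^E f$ from $H^0$ into $H^0$ and from $H^1$ into $H^1$, and then interpolate to obtain the $H^{1/2}$ estimate, exactly as in the proof of Lemma \ref{extension_lemma_3} via Theorem A.1.1 of \cite{DaPrato}. The two ingredients that make this routine here are the pointwise bounds on $\zeta^E$ and $\nabla \zeta^E$ collected in \eqref{zeta_E_L^infty}, together with the fact that $\zeta^E$ and all its derivatives vanish at $r=d$, so that extending $\tilde{p}^E\zeta^E$ by $0$ from $V(a,R)$ to all of $B^c(0,a)$ preserves $H^{1/2}$-regularity (and of course the norm).

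First, I would record the two pointwise inequalities
\[
|\tilde{p}^E \zeta^E|\le |\tilde{p}^E|,\qquad
|\nabla(\tilde{p}^E\zeta^E)|\le |\zeta^E|\,|\nabla \tilde{p}^E|+|\tilde{p}^E|\,|\nabla\zeta^E|\le |\nabla \tilde{p}^E|+\tfrac{3}{d}|\tilde{p}^E|,
\]
valid on $V(a,R)$ by \eqref{zeta_E_L^infty} and Leibniz. Integrating over $V(a,R)$ (equivalently, over $B^c(0,a)$ after extending by zero) gives immediately
\[
\|\tilde{p}^E\zeta^E\|_{L^2(B^c(0,a))}\le\|\tilde{p}^E\|_{L^2(V(a,R))},
\]
and
\[
\|\nabla(\tilde{p}^E\zeta^E)\|_{L^2(B^c(0,a))}\le \|\nabla \tilde{p}^E\|_{L^2(V(a,R))}+\tfrac{3}{d}\|\tilde{p}^E\|_{L^2(V(a,R))}.
\]

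Adding the two $L^2$ bounds, and using the convention $\|f\|_{H^1}:=\|f\|_{L^2}+\|\nabla f\|_{L^2}$ (the same norm that governs the constants in the interpolation theorem being invoked), the second inequality yields
\[
\|\tilde{p}^E\zeta^E\|_{H^1(B^c(0,a))}
\le \bigl(1+\tfrac{3}{d}\bigr)\|\tilde{p}^E\|_{L^2(V(a,R))}+\|\nabla \tilde{p}^E\|_{L^2(V(a,R))}
\le \bigl(1+\tfrac{3}{d}\bigr)\|\tilde{p}^E\|_{H^1(V(a,R))}.
\]
Thus multiplication by $\zeta^E$ is bounded with operator norm $1$ from $L^2(V(a,R))$ to $L^2(B^c(0,a))$ and with operator norm $1+3/d$ from $H^1(V(a,R))$ to $H^1(B^c(0,a))$. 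Applying the complex (or real) interpolation bound at the $\theta=1/2$ level (Theorem A.1.1 in \cite{DaPrato}) produces a constant equal to the geometric mean of the two endpoint constants, namely
\[
\|\tilde{p}^E\zeta^E\|_{H^{1/2}(B^c(0,a))}\le \sqrt{1\cdot \bigl(1+\tfrac{3}{d}\bigr)}\;\|\tilde{p}^E\|_{H^{1/2}(V(a,R))},
\]
which is the claimed inequality.

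The only subtlety worth flagging is that the constant $(1+3/d)^{1/2}$, rather than something of the form $(1+9/d^2)^{1/2}$, is obtained by taking the $\ell^1$ form $\|f\|_{H^1}=\|f\|_{L^2}+\|\nabla f\|_{L^2}$ in the endpoint inequality, so that the $3/d$ factor can be pulled out cleanly via the triangle inequality, and by then feeding this into the abstract interpolation theorem; a naive use of the Hilbertian $H^1$-norm together with Cauchy--Schwarz would give a slightly worse constant. Checking that this choice is consistent with the version of the interpolation theorem quoted from \cite{DaPrato} is the only real point to verify; the rest is direct integration of the pointwise Leibniz bound.
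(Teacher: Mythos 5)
Your argument is correct and takes the approach the paper almost certainly has in mind: the paper itself omits a proof, saying only that the result ``follows easily from \eqref{zeta_E_L^infty},'' and the natural route is exactly the one you use, namely endpoint multiplier bounds for $\zeta^E$ on $L^2$ and $H^1$ followed by interpolation at $\theta=1/2$, which is the same scheme the authors invoke explicitly in Lemma~\ref{extension_lemma_3}. The extension-by-zero step is justified because $\zeta^E$ (and hence $\tilde{p}^E\zeta^E$) vanishes identically for $r\ge d<R$, so passing from $V(a,R)$ to $B^c(0,a)$ is harmless.

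One small remark on the ``subtlety'' you flag at the end: the choice between the $\ell^1$-style norm $\|f\|_{L^2}+\|\nabla f\|_{L^2}$ and the Hilbertian norm $(\|f\|_{L^2}^2+\|\nabla f\|_{L^2}^2)^{1/2}$ does not actually matter for the constant. With $A:=\|\zeta^E\|_\infty\le 1$ and $B:=\|\nabla\zeta^E\|_\infty\le 3/d$, the Leibniz bound gives pointwise $|\zeta^E f|\le A|f|$ and $|\nabla(\zeta^E f)|\le A|\nabla f|+B|f|$; writing $a=\|f\|_{L^2}$, $b=\|\nabla f\|_{L^2}$, the Hilbertian $H^1$ norm of $\zeta^E f$ is bounded by $\bigl(A^2a^2+(Ab+Ba)^2\bigr)^{1/2}$, and the vector identity $(Aa,Ab+Ba)=A(a,b)+(0,Ba)$ together with the triangle inequality in $\mathbb{R}^2$ gives $\bigl(A^2a^2+(Ab+Ba)^2\bigr)^{1/2}\le (A+B)(a^2+b^2)^{1/2}$. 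So the $H^1\to H^1$ operator norm is again at most $1+3/d$ in the Hilbertian convention, and interpolation yields the same $(1+3/d)^{1/2}$ either way. Your proof is therefore complete without needing to commit to the $\ell^1$ convention.
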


%\begin{proof}
%From inequality \eqref{zeta_E_L^infty},
%we see that the operator defined by multiplication by $\zeta^E$ is a bounded from
%$L^2(V(a,R))\rightarrow L^2(B^c(0,a))$ and $H^1(V(a,R))\rightarrow H^1(B^c(0,a))$ satisfying
%$\| \tilde{p}^E \zeta^E \|_{B^c(0,a)} \leq \| \tilde{p}^E \|_{V(a,R)}$ and
%$\| \tilde{p}^E \zeta^E \|_{H^1(B^c(0,a))} \leq \left(1+\frac{3}{d}\right) \| \tilde{p}^E \|_{H^1(V(a,R))}$,
%respectively.
%The result follows by interpolation (see Theorem A.1.1 in \cite{DaPrato} for the interpolation and \cite{lions} for the equivalence of $H^{1/2}(V(a,R))$ and $H^1(B^c(0,a))$ with the resulting spaces).
%\end{proof}

\begin{lemma}
\label{multiplication_by_zeta_I_lemma}
Let $\tilde{p}^I \in H^{1/2}(B(0,R))$ with $d<R$.  Then
%\begin{equation*}
$\tilde{p}^I \zeta^I \in H^{1/2}(B(0,R))$
%\end{equation*}
with
%\begin{equation*}
$\| \tilde{p}^I \zeta^I \|_{H^{1/2}(B(0,R))} \leq \left(1+\frac{3}{d}\right)^{\frac{1}{2}} \|\tilde{p}^I \|_{H^{1/2}(B(0,R))}.$
%\end{equation*}
\end{lemma}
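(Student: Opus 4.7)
The plan is to prove the lemma by interpolating between the endpoints $L^2$ and $H^1$, exactly in the spirit of the proof of Lemma \ref{extension_lemma_3}. Recognizing the statement as an operator-norm bound for the multiplication map $M_{\zeta^I} : f \mapsto \zeta^I f$ on $H^{1/2}(B(0,R))$, it is enough to control $M_{\zeta^I}$ at the two endpoints $L^2(B(0,R))$ and $H^1(B(0,R))$ and invoke the standard interpolation inequality (Theorem A.1.1 of \cite{DaPrato}, as already used earlier in the paper).

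The $L^2$ endpoint is immediate from \eqref{zeta_E_L^infty}: since $\|\zeta^I\|_\infty \leq 1$, we have $\|\zeta^I f\|_{L^2(B(0,R))} \leq \|f\|_{L^2(B(0,R))}$, hence $\|M_{\zeta^I}\|_{L^2 \to L^2} \leq 1$. For the $H^1$ endpoint, I would apply the product rule $\nabla(\zeta^I f) = \zeta^I \nabla f + f\nabla \zeta^I$ together with $\|\nabla \zeta^I\|_\infty \leq 3/d$ to obtain $\|\nabla(\zeta^I f)\|_{L^2} \leq \|\nabla f\|_{L^2} + (3/d)\|f\|_{L^2}$. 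A brief algebraic check using the elementary inequality $a^2 + (b+ka)^2 \leq (1+k)^2(a^2+b^2)$, valid for all $k \geq 0$ because $a^2 + b^2 - ab \geq 0$, then yields $\|\zeta^I f\|_{H^1(B(0,R))} \leq (1 + 3/d)\|f\|_{H^1(B(0,R))}$, so $\|M_{\zeta^I}\|_{H^1 \to H^1} \leq 1 + 3/d$.

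With these two endpoint bounds in hand, interpolation at parameter $\theta = 1/2$ (using $H^{1/2} = (L^2, H^1)_{1/2,2}$) gives
\[
    \|M_{\zeta^I}\|_{H^{1/2}\to H^{1/2}} \;\leq\; \|M_{\zeta^I}\|_{L^2 \to L^2}^{1/2}\,\|M_{\zeta^I}\|_{H^1 \to H^1}^{1/2} \;\leq\; (1+3/d)^{1/2},
\]
which, applied to $f = \tilde{p}^I$, is the stated inequality. I do not foresee any real obstacle: the argument is strictly parallel to that of Lemma \ref{multiplication_by_zeta_E_lemma}, with the annular domain there replaced by the solid ball $B(0,R)$ here. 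The only point requiring minor care is the elementary algebraic inequality above, which ensures that the $H^1$ operator norm comes out with the sharp constant $1 + 3/d$ rather than a weaker constant of the form $\sqrt{2(1 + 9/d^2)}$ that would arise from a naive $(a+b)^2 \leq 2a^2 + 2b^2$ split; without this refinement the exponent on $1 + 3/d$ in the final estimate would be correct but the prefactor would be inflated.
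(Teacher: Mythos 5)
Your proposal is correct, and it is essentially the argument the paper has in mind (the paper states only that Lemmas \ref{multiplication_by_zeta_E_lemma} and \ref{multiplication_by_zeta_I_lemma} ``follow easily from \eqref{zeta_E_L^infty}'' and omits the details). The two endpoint operator bounds — $\|M_{\zeta^I}\|_{L^2\to L^2}\leq 1$ and $\|M_{\zeta^I}\|_{H^1\to H^1}\leq 1+3/d$ — combined with the exactness of the interpolation functor at $\theta=1/2$ deliver precisely the constant $(1+3/d)^{1/2}$ in the statement, and your refinement via $a^2+(b+ka)^2\leq(1+k)^2(a^2+b^2)$ (valid since $a^2-ab+b^2\geq 0$) is exactly what is needed to get the clean $H^1$ endpoint constant $1+3/d$ instead of a weaker $\sqrt{2(1+9/d^2)}$.
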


%\begin{proof}
%The proof is identical to that of Lemma \ref{multiplication_by_zeta_E_lemma}.
%\end{proof}

We are now ready to state the extension theorem for $p$.

\begin{theorem}
\label{thm_extension}
Let $0<a<d<R\in \R$ and $\Omega_F:= B(0,R) \setminus \Omega_B$, where $\Omega_B = \bigcup_{l\in \Lambda} B(x^l, a)$, $x^l \in d\mathbb{Z}^3$,
and $\Lambda=\left\{ l \in \mathbb{Z}^3 \big| \text{ } |x^l| + \frac{d}{2}-a < R \right\}$.  Let $p \in L^2(\partial \Omega_F)$.  Then there exists an extension $\tilde{p}\in H^{1/2}(\Omega_F)$ such that $\tilde{p}\big|_{\partial \Omega_F} = p$, which satisfies
%\begin{equation*}
$\| \tilde{p} \|_{H^{1/2}(\Omega_F)} \leq
\beta \| p \|_{\partial \Omega_F},$
%\end{equation*}
where $\beta$ is given in equation \eqref{beta}.
%\begin{equation*}
%\beta =
%2^{\frac{1}{8}}\left(1+\frac{6}{d}\right)^{\frac{1}{2}}\left(\frac{d}{2}+\frac{1}{a}+\sqrt{2} \right)^{\frac{1}{4}} +
%\left(\frac{1}{2}\right)^{\frac{1}{4}}\left(1+\frac{3}{R-d/2}\right)^{\frac{1}{2}} \left( 1+\frac{R}{3} \right)^{\frac{1}{4}}.
%\end{equation*}

\end{theorem}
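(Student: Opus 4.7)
The plan is to construct $\tilde p$ as a superposition of harmonic extensions taken from each connected component of $\partial \Omega_F$, each multiplied by a smooth radial cutoff so that the resulting pieces have pairwise disjoint supports inside $\Omega_F$; the $H^{1/2}(\Omega_F)$ norm then decomposes cleanly across the pieces and each piece is controlled by the preceding lemmas. Concretely, write $p = p^I + \sum_{l\in\Lambda} p^E_l$ with $p^I = p|_{\partial B(0,R)}$ and $p^E_l = p|_{\partial B(x^l,a)}$, and apply Lemma \ref{extension_lemma_3} to extend $p^I$ harmonically into $B(0,R)$ and Lemma \ref{extension_lemma_6} (translated to $x^l$, with outer radius $d/2$ in place of $d$) to extend each $p^E_l$ harmonically into $B(x^l, d/2)\setminus B(x^l,a)$. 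Multiply these harmonic extensions, respectively, by radial cutoffs modeled on \eqref{zeta^I_def} and \eqref{zeta^E_def} but rescaled so that the outer cutoff vanishes on $B(0, R-d/2)$ and each inclusion cutoff vanishes outside $B(x^l, d/2)$; set $\tilde p$ equal to their sum. Since the balls $B(x^l, d/2)$ are pairwise disjoint (because $x^l \in d\mathbb{Z}^3$) and disjoint from the outer shell $B(0,R)\setminus B(0,R-d/2)$, the pieces have pairwise disjoint supports. At each point of $\partial \Omega_F$ exactly one cutoff equals $1$, and on that piece the harmonic extension carries the correct Dirichlet datum, so the boundary trace of $\tilde p$ is $p$ as required.

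For the norm bound I would estimate each piece individually. Combining Lemma \ref{extension_lemma_3} with Lemma \ref{multiplication_by_zeta_I_lemma} rescaled to a shell of width $R-d/2$ gives
\[
\| \zeta^I \tilde p^I \|_{H^{1/2}(\Omega_F)} \leq 2^{-1/4}\sqrt{1+3(R-d/2)^{-1}}\,(1+R/3)^{1/4}\, \| p^I \|_{\partial B(0,R)},
\]
which is exactly the second summand of $\beta$ times $\| p^I \|_{\partial B(0,R)}$. Similarly, combining Lemma \ref{extension_lemma_6} (with $d \mapsto d/2$) with Lemma \ref{multiplication_by_zeta_E_lemma} (rescaled likewise) yields
\[
\| \zeta^E_l \tilde p^E_l \|_{H^{1/2}(\Omega_F)} \leq 2^{1/8}\sqrt{1+6/d}\,(d/2+a^{-1}+\sqrt{2})^{1/4}\, \| p^E_l \|_{\partial B(x^l,a)},
\]
where the identity $6/d = 3/(d/2)$ furnishes the first factor of the first summand of $\beta$.

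Finally, I would assemble the global estimate. Because the pieces have pairwise disjoint supports in $\Omega_F$, the $L^2$ and $H^1$ norms squared of the total sum coincide with the sums of the individual $L^2$ and $H^1$ norms squared; by real interpolation (the $K$-method on the pair $(L^2, H^1)$) this $\ell^2$-additivity passes to the $H^{1/2}$ norm squared. Combined with the trivial identity
\[
\| p^I \|_{\partial B(0,R)}^2 + \sum_{l\in\Lambda} \| p^E_l \|_{\partial B(x^l,a)}^2 = \| p \|_{\partial \Omega_F}^2,
\]
this delivers the claimed bound $\| \tilde p \|_{H^{1/2}(\Omega_F)} \leq \beta \| p \|_{\partial \Omega_F}$ with $\beta$ as in \eqref{beta}. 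The main delicate step is the $\ell^2$-interpolation argument: a naive triangle inequality summing the individual $H^{1/2}$ bounds would introduce an unwanted factor depending on $|\Lambda|$ (and hence on $N$), so I would verify carefully via the $K$-functional that disjointness of supports transfers from $L^2$ and $H^1$ to the interpolation space $H^{1/2}$, uniformly in the number of inclusions.
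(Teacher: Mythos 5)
Your construction is the paper's own: harmonic extension into each boundary component (Lemmas \ref{extension_lemma_3}, \ref{extension_lemma_6}), multiplied by rescaled radial cutoffs with supports in $B(x^l,d/2)\setminus B(x^l,a)$ and $B(0,R)\setminus B(0,R-d/2)$, then summed. The per-piece constants you extract are exactly the two summands of $\beta$. Where you go beyond the paper is in the last step: the paper's proof ends by writing $\sum_l C_1 \|p\|_{\partial B(x^l,a)} + C_2\|p\|_{\partial B(0,R)} \leq \beta\|p\|_{\partial\Omega_F}$, which is not a valid inequality (an $\ell^1$ sum is not dominated by an $\ell^2$ sum), so an $N^{1/2}$ is lost as written. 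You correctly identify that the bound has to go through an $\ell^2$-additivity of the $H^{1/2}(\Omega_F)$ norm across the disjointly supported pieces, which is the honest way to avoid the $N$-dependence; that is a genuine clarification of the paper's argument.

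However, the interpolation argument you sketch to secure the $\ell^2$-additivity is itself incomplete, precisely at the point you flag as ``delicate.'' The assembly operator $T((f_l)) \mapsto \sum_l f_l$ is an isometry from $\oplus_l L^2(A_l)$ into $L^2(\Omega_F)$, and it is bounded from $\oplus_l H^1(A_l)$ into $H^1(\Omega_F)$ only on the subspace of functions vanishing on the internal part of $\partial A_l$ (which your cutoff choice ensures). But interpolating the zero-trace subspace against $L^2$ produces the Lions--Magenes space $H^{1/2}_{00}(A_l)$, not $H^{1/2}(A_l)$, and the $H^{1/2}_{00}$ norm carries the additional weight $\int_{A_l} |f_l(x)|^2\, d(x,\partial A_l)^{-1}\, dx$. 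One cannot bound the $H^{1/2}_{00}$ norm by the $H^{1/2}$ norm with a constant independent of the piece (indeed, functions like $x^\alpha$ on $(0,1)$ show that the gap between the two norms is not uniformly controlled as the trace degenerates). So ``$\ell^2$-additivity passes to $H^{1/2}$ by the $K$-method'' is not automatic. To close the argument you would need to estimate the $H^{1/2}_{00}(A_l)$ norm of $\zeta^E_l\,\tilde p^E_l$ in terms of $\|p\|_{\partial B(x^l,a)}$, using the fact that $\zeta^E$ vanishes to all orders at the outer edge of the annulus so the weighted term is controlled by $\|\tilde p^E_l\|_{L^2}$; or, equivalently, run the Gagliardo-seminorm argument directly and absorb the cross-terms $\iint_{A_l\times A_m}$ using the vanishing of the cutoffs near the touching spheres. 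Either route is workable, but neither is supplied by the bare appeal to the $K$-functional, and the paper itself does not address the issue at all.
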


\begin{proof}
Define
$\displaystyle
\tilde{p}(x) = \sum_{l=1}^N \tilde{p}^E (x-x^l) \zeta^E (x-x^l) + \tilde{p}^I (x) \zeta^I (x),
$
where $\tilde{p}^E$ and $\tilde{p}^I$ are given  in Lemmas \ref{extension_lemma_6} and \ref{extension_lemma_3}, respectively, and $\zeta^E$ and $\zeta^I$ are the cutoffs defined in equations \eqref{zeta^E_def} and \eqref{zeta^I_def}, respectively.  Then $\tilde{p}\big|_{\partial \Omega_F} = p$ and, from Lemmas \ref{extension_lemma_3} %\ref{extension_lemma_6}, \ref{multiplication_by_zeta_E_lemma},
-- \ref{multiplication_by_zeta_I_lemma} it follows that
\begin{align}
\| \tilde{p} \|_{H^{1/2}(\Omega_F)}
\leq & \sum_{l=1}^N \left(1+\frac{6}{d}\right)^{\frac{1}{2}} 2^{1/8} \left(\frac{d}{2}+\frac{1}{a}+\sqrt{2} \right)^{\frac{1}{4}} \| p \|_{\partial B(x^l,a)} \nonumber \\
&+ \left(1+\frac{3}{R-d/2}\right)^{\frac{1}{2}} \left(\frac{1}{2}\right)^{\frac{1}{4}} \left( 1+\frac{R}{3} \right)^{\frac{1}{4}} \| p \|_{\partial B(0,R)}
\leq
%\left[ 2^{1/8}\left(1+\frac{6}{d}\right)^{\frac{1}{2}}\left(\frac{d}{2}+\frac{1}{a}+\sqrt{2} \right)^{\frac{1}{4}} \right. \\
%& \left.
%+  \left(\frac{1}{2}\right)^{\frac{1}{4}} \left(1+\frac{3}{R-d/2}\right)^{\frac{1}{2}} \left( 1+\frac{R}{3} \right)^{\frac{1}{4}} \right]
\beta \|p \|_{\partial \Omega_F}. \nonumber
\end{align}
\end{proof}

Combining Theorems \ref{thm_pde_p} and \ref{thm_extension}, we finally obtain \eqref{renormalized_p_ineq}.
%that there exists $C>0$ such that
%\begin{equation*}
%\| p + C \|_{L^2(\Omega_F)} \leq (1+\sqrt{2}) \beta \|p + C \|_{L^2(\partial \Omega_F)},
%\end{equation*}
%where $\beta$ is given in equation \eqref{beta}.
%\begin{align}
%\beta &=
%2^{\frac{1}{8}}\left(1+\frac{6}{d}\right)^{\frac{1}{2}}\left(\frac{d}{2}+\frac{1}{a}+\sqrt{2} \right)^{\frac{1}{4}} +
%\left(\frac{1}{2}\right)^{\frac{1}{4}}\left(1+\frac{3}{R-d/2}\right)^{\frac{1}{2}} \left( 1+\frac{R}{3} \right)^{\frac{1}{4}}. \nonumber
%\end{align}

The results in this section can be extended to other shapes through the use of eigenfunctions of the Laplace operator on their surfaces.  For many simple shapes, including ellipsoids, these are explicitly known \cite{hobson}.

\subsection{Estimate of $\| p \|_{\partial \Omega_F}$}
\label{p_bdy_estimate_section}
% throw away crap for general surfaces, write for spheres
% be specific that we're using the standard measure on the sphere
%In order to estimate $\| p \|_{\partial \Omega_F}$, we can use the boundary integral
%equation corresponding to $p$ for $\xi\in\partial\Omega_F$:

In this section, we establish the last needed estimate \eqref{p_bdy_estimate_preliminary}.
We will do so by employing the boundary integral equation for $p$ derived from the Stokes system, where we recall \ $\mathcal{P}(\nu)  = \nu/(4\pi\, |\nu|^3)$ (see, for example, \cite{pozrikidis}):
\begin{equation}
\label{bie_p}
\frac{1}{2} p(\xi)=-\frac{1}{2} \text{p.v.} \, \int_{\partial \Omega_F} \mathcal{P}(\xi-\nu) \cdot t(\nu) \, dS_\nu
-\eta \nabla_{\xi} \cdot  \text{p.v.}\, \int_{\partial \Omega_F} n \mathcal{P}(\xi-\nu) \cdot u(\nu) \, dS_\nu.
\end{equation}
%$\mathcal{P}$ has a non-integrable singularity and so the convolution with functions from $L^2$ must be
%interpreted in a principal value sense.  For a derivation of equation \eqref{bie_p}, see, for example, \cite{pozrikidis}.
To simplify notation, we will henceforth drop the p.v.~from principal value integrals.  Any integral involving $\mathcal{P}$ in this section should be interpreted in a principal value sense.

To explicitly estimate the norm of the kernel $\mathcal{P}$ in each integral, we employ the Fourier transform. To this end,
%In order to estimate the integrals in the boundary integral equation \eqref{bie_p}, we will take
%the Fourier transform of each.  The natural setting for doing this is to embed
we embed $\R^3$ into $\C^3$ and thence into $\mathcal{A}_3$, the $8$-dimensional Clifford algebra $Cl_{0,3}$
with basis $1,e_1,e_2,e_3,e_1 e_2, e_2 e_3, e_1 e_3, e_1 e_2 e_3$.
For simplicity, we will write $e_{ij}:= e_i e_j$ and $e_{123}:= e_1 e_2 e_3$.
Under this embedding, scalars are identified with multiples of $1$ and vectors
%A vector
$x=(x_1,x_2,x_3)\in \C^3$ with $x_1 e_1 + x_2 e_2 + x_3 e_3$.
%\in \mathcal{A}_3.
%Multiplication is defined for $x\in\mathcal{A}_3$ by enforcing that $x^2=-\| x \|^2$.  Thus, $e_i e_j = -e_j e_i$.
For an arbitrary $x=x_0 + x_1 e_1 + \cdots x_{12} e_1 e_2 + \cdots \in \mathcal{A}_3$,
 we let
% need the operations
\begin{align}
\overline{x}& := x_0 - x_1 e_1 - x_2 e_2 - x_3 e_3 - x_{12} e_{12} - x_{23} e_{23} - x_{13} e_{13} + x_{123} e_{123}, \nonumber \\
\widetilde{x}& := x_0 + x_1 e_1 + x_2 e_2 + x_3 e_3 - x_{12} e_{12} - x_{23} e_{23} - x_{13} e_{13} - x_{123} e_{123}, \nonumber \\
x^\dagger &:= x_0^* - x_1^* e_1 - x_2^* e_2 - x_3^* e_3 - x_{12}^* e_{12} - x_{23}^* e_{23} - x_{13}^* e_{13} + x_{123}^* e_{123}, \nonumber
\end{align}
where $x_i^*$ denotes the complex conjugate of $x_i$.
%For $x\in\mathcal{A}_3$ of the form $x=x_0 + x_1 e_1 + x_2 e_2 + x_3 e_3$ %(called a {\itshape paravector}),
%we have $x^{-1}= \frac{\overline{x}}{\|x \|^2}$.

Let $\Gamma_1,\Gamma_2 \subset \R^3$ be spheres or planes.
By $L^2(\Gamma_k, \mathcal{A}_3)$
we denote the space of functions  $v:\Gamma_k \rightarrow \mathcal{A}_3$ such that
$\| \int_{\Gamma_k} v(x) v^\dagger(x) dS_x \| < \infty$,
%The set of such functions is clearly a Hilbert space
which is a Hilbert space with the inner product
\begin{equation*}
\left< f, g \right>_{\Gamma_k} := \int_{\Gamma_k} f^\dagger(x) g(x) \, dS_x.
\end{equation*}
Since this definition is valid for all $L^2(\Gamma_k, \mathcal{A}_3)$ functions, in which $C^\infty(\Gamma_k, \mathcal{A}_3)$ are dense, we can extend this definition to the case where either $f$ or $g$ is a distribution.
Under the linear fractional transformation $\Phi(x)=(ax+b)(cx+d)^{-1}$ mapping $\Gamma_2$ to $\Gamma_1$, we can write
\begin{equation}
\label{cov_formula}
\left< f(y), g(y) \right>_{\Gamma_1} = \left< J(\Phi, x) f(\Phi(x)), J(\Phi, x) g(\Phi(x)) \right>_{\Gamma_2},
\end{equation}
where $J(\Phi,x):= (\widetilde{cx} + \widetilde{d}) \|cx+d\|^{-3}$.

We now assume $\Gamma=S^2$ and set $\phi(x) = \left( e_1 x + 1\right) \left(x + e_1\right)^{-1}$.  Then, $\psi = \phi^{-1}$ is the standard stereographic projection from $S^2$ onto $\mathbb{R}^2$, identified with the  span over $\mathbb{R}$ of $e_1,e_2$.
%$\phi$ is
%the stereographic projection mapping $\R^2 \rightarrow S^2$ (where we have identified $\R^2$ with elements $x\in\mathcal{A}_3$ of the form $x=x_2 e_2+x_3 e_3$, $x_2,x_3\in\R$).
% Let
%\begin{equation*}
%$\psi (x)= \phi^{-1}(x)$. %=\left(-\frac{e_1}{2}x+\frac{1}{2}\right)\left(\frac{1}{2}x-\frac{e_1}{2}\right)^{-1}.
%\end{equation*}
%By the Tomas-Stein restriction theorem,
This projection allows one to define the Fourier transform for
functions $f \in L^2(S^2, \mathcal{A}_3)$ as:
% we can define the Fourier transform on the sphere (see \cite{ryan}):
\begin{equation*}
\mathcal{F}_{S^2} (f) (z) := \frac{1}{2\pi} J(\psi, z)  \int_{S^2} e^{i \left<\psi(x),\psi(z)\right>} \overline{J}(\psi, x) f(x) dS_x,
\end{equation*}
where $J(\psi,x) = 4  \left( \widetilde{x} - e_1 \right) \| x + e_1 \|^{-3}$ and $\left< x, y\right>:= \sum_{i=1}^3 x_i y_i$. $\mathcal{F}_{S^2}$ is an isometry from $L^2(S^2, \mathcal{A}_3)$ into itself (see e.g. \cite{ryan}.)

\begin{theorem}
\label{pi_norm_sphere}
Let $t \in (L^2(\partial B(0,r)))^3$ and
\begin{equation}
\label{Pi_def}
\Pi_{\partial B(0,r)} t := \int_{\partial B(0,r)} \mathcal{P}(\xi-\nu) \cdot t(\nu) dS_\nu.
\end{equation}
Then, $\Pi_{\partial B(0,r)}$ is a bounded operator on  $L^2(\partial B(0,r))$
%\rightarrow L^2(\partial B(0,r))$,
with norm $\| \Pi_{\partial B(0,r)} \|\leq \frac{1}{2}$.
\end{theorem}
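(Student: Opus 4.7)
The plan is to exploit the conformal covariance of the pressure kernel $\mathcal{P}$ under the stereographic projection to reduce the operator on the sphere to a translation-invariant (convolution) operator on $\mathbb{R}^2$, then read off the operator norm via the Fourier transform.

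First I would reduce to the case $r=1$. Because $\mathcal{P}(\lambda w)=\lambda^{-2}\mathcal{P}(w)$ and $dS$ scales as $r^2$, the substitution $\xi=r\xi'$, $\nu=r\nu'$, $t(\nu)=\tilde t(\nu')$ shows that $\|\Pi_{\partial B(0,r)}\|_{L^2\to L^2}$ is independent of $r$. I would then embed scalars and vectors into $\mathcal{A}_3$ so that the scalar-valued kernel $\mathcal{P}(\xi-\nu)\cdot t(\nu)$ can be recast in terms of Clifford multiplication, making it compatible with the change-of-variables identity \eqref{cov_formula}.

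Next I would apply \eqref{cov_formula} with the Möbius transformation $\phi$ (whose inverse $\psi$ is the stereographic projection) to transfer the bilinear form $\langle f,\Pi_{S^2}g\rangle_{S^2}$ into an integral over $\mathbb{R}^2\times\mathbb{R}^2$. The crucial algebraic input is the conformal pull-back identity
\begin{equation*}
\phi(x)-\phi(y) \;=\; J(\phi,x)\,(x-y)\,\overline{J(\phi,y)}\,\|x+e_1\|^{-1}\|y+e_1\|^{-1},
\end{equation*}
(up to the appropriate bar/tilde conjugations built into $J$), which guarantees that the Jacobian factors produced by the measure on $S^2$ exactly absorb those coming from pulling back the kernel $\mathcal{P}(\xi-\nu)$. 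After this cancellation the composed operator on the plane has the form $g\mapsto K*g$ with a translation-invariant, homogeneous kernel of degree $-2$ acting on $\mathcal{A}_3$-valued functions.

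Finally, since $\mathcal{F}_{S^2}$ is an $L^2$-isometry, applying it diagonalizes the convolution as multiplication by a bounded Fourier symbol $m(\zeta)$. This symbol is essentially a two-dimensional Riesz-type multiplier (the Fourier transform of $w/|w|^3$ on $\mathbb{R}^2$ is proportional to $\zeta/|\zeta|$, of unit modulus); combining the constants $1/(4\pi)$ in the definition of $\mathcal{P}$ with the Jacobian normalization factor $4\|x+e_1\|^{-3}$ appearing in $J(\psi,x)$ yields $\|m\|_{L^\infty}\leq 1/2$, hence $\|\Pi_{\partial B(0,r)}\|\leq 1/2$ by Plancherel. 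The principal-value nature of $\Pi$ is handled by approximating $t$ by smooth functions and passing to the limit in the spatial representation before applying Plancherel.

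The main obstacle is purely algebraic: tracking the signs, bars, and tildes through the Clifford multiplication so that the pull-back identity produces a genuine convolution (rather than a kernel twisted by position-dependent multipliers), and then computing the resulting Fourier symbol in closed form to verify the sharp constant $1/2$. Once these identities are in place, the operator-norm estimate follows immediately from $L^\infty$-boundedness of the symbol.
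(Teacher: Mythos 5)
Your proposal follows essentially the same route as the paper: reduce to $r=1$ by scaling, embed into the Clifford algebra $\mathcal{A}_3$, use the conformal covariance of $\mathcal{P}$ under the stereographic projection (the identity $\mathcal{P}(\psi(\xi)-\psi(\nu))=J(\psi,\xi)^{-1}\mathcal{P}(\xi-\nu)\widetilde{J}(\psi,\nu)^{-1}$) together with the change-of-variables formula \eqref{cov_formula} to turn $\Pi_{S^2}$ into a convolution on $\mathbb{R}^2$, then conclude via the $L^\infty$ bound on the Fourier symbol. One small imprecision: the constant $\frac{1}{2}$ comes entirely from $\mathcal{F}_{\R^2}(\mathcal{P})(\nu)=-\frac{i}{2}\,\nu/\|\nu\|$, while the Jacobian factors $J(\psi,\cdot)$ cancel through the covariance identities and survive only as unit-modulus conjugating factors, so they do not combine with the $1/(4\pi)$ to produce the bound as you suggest; the end result is the same.
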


\begin{proof}
We remind the reader that the integral in \eqref{Pi_def} must be understood in a principal value sense.
This integral is invariant under dilations,
so it is sufficient to consider only the case $r=1$. Moreover, it is enough to calculate the $L^2$-norm of $\mathcal{F}_{S^2} \left(\Pi_{S^2} t\right)$, as the Fourier transform is an isometry.  Let $t_m\in C^\infty(\Gamma_k, \mathcal{A}_3)$ be such that $t_m \rightarrow t$ in $L^2(\Gamma_k, \mathcal{A}_3)$.
%We first regularize $\mathcal{P}$ in the standard way for singular integral kernels, by applying a smooth cut-off $\chi_\epsilon$ on $B(0,\epsilon)^c$ and setting  $\mathcal{P}_\epsilon := \chi_{\epsilon} \, \mathcal{P}$.\ Let $\Pi_{S^2}^\epsilon$ the corresponding operator.
%Additionally, viewing $\mathcal{P}$ and $t$ as functions $\mathcal{P}$ and $t$, respectively,
%taking values in $\mathcal{A}_3$, we can write $\Pi_{S^2} t =\text{Proj}_0 \int_{S^2} \mathcal{P}(\xi-\nu) t(\nu) dS_\nu$,
%where $\text{Proj}_i x=x_i$ for $x\in \mathcal{A}_3$.
Then, noting that (see, for example, \cite{peetre})
%\begin{equation*}
$\mathcal{P}\left(\psi(\xi)-\psi(\nu)\right)= J(\psi, \xi)^{-1} \mathcal{P}(\xi-\nu) \widetilde{J}(\psi, \nu)^{-1}$,
%\end{equation*}
we have
\begin{equation*}
\begin{split}
\Pi_{S^2}  t_m(\xi) =& \int_{S^2} \mathcal{P}(\xi-\nu) t_m(\nu) d S_\nu
= \int_{S^2} J(\psi,\xi) \mathcal{P}\left(\psi(\xi)-\psi(\nu)\right) \widetilde{J}(\psi,\nu) t_m(\nu) d S_\nu. \\
\intertext{The above equality is valid in principal value sense (i.e., pointwise outside a neighborhood of radius $\epsilon$ about $\xi=\nu= -e_1$, since $J$ is singular there, and then passing to the limit $\epsilon\rightarrow 0$).  Since $\widetilde{J}=J$, and $J^\dagger=\overline{J}$ (because the components of $J$ are real-valued), this is}
=& J(\psi, \xi) \left< J(\psi, \nu) J(\psi, \nu)^{-1} \left(\mathcal{P}(\psi(\xi)-\psi(\nu))\right)^\dagger,
J(\psi, \nu) t_m(\nu) \right>_{S^2}.
\end{split}
\end{equation*}
By equation \eqref{cov_formula}, we obtain
\begin{equation*}
\begin{split}
\Pi_{S^2} t_m (\xi)
=& J(\psi, \xi)\left< J(\psi, \phi(\nu))^{-1} \left(\mathcal{P}(\Psi(\xi)-\nu)\right)^\dagger,
 t(\phi(\nu)) \right>_{\R^2}. \\
%=& J(\psi, \xi)\int_{\R^2} \mathcal{P}(\Psi(\xi)-\nu) \overline{J}(\psi, \phi(\nu))^{-1} t(\phi(\nu))  d \nu. \\
\end{split}
\end{equation*}
 We observe that, since $J J^{-1}=1$, $J^{-1}(\psi,x)$ is bounded over the sphere and vanishes for $x=-e^1$, so that $J(\psi, \phi(\nu))^{-1} \left(\mathcal{P}(\Psi(\xi)-\nu)\right)^\dagger$ is well defined as a principal value distribution.  Therefore, this and the following expressions are well-defined.
Next, we compute the Fourier transform of this expression:
\begin{equation*}
\begin{split}
\mathcal{F}_{S^2} \left(\Pi_{S^2} t_m\right) (z)
%=& \mathcal{F}_{S^2} \left(J(\psi, \xi)\int_{\R^2} \mathcal{P}(\Psi(\xi)-\nu) \overline{J}(\psi, \phi(\nu))^{-1} t(\phi(\nu))  d \nu \right) (z) \\
%=& \frac{1}{2\pi} J(\psi, z) \int_{S^2} e^{i \left< \psi(\xi), \psi(z) \right>} \overline{J}(\psi,\xi) J(\psi,\xi) \\
%& \cdot \int_{\R^2} \mathcal{P}(\Psi(\xi)-\nu)
%\overline{J}(\psi, \phi(\nu))^{-1} t(\phi(\nu))  d \nu \, d S_\xi \\
=& \frac{1}{2\pi} J(\psi,z) \bigg< J(\psi,\xi) e^{-i \left< \psi(\xi),\psi(z) \right>},  \\
&  J(\psi,\xi) \int_{\R^2} \mathcal{P}(\psi(\xi)-\nu) \overline{J}(\psi, \phi(\nu))^{-1} t_m(\phi(\nu))  d \nu \bigg>_{S^2}. \\
\end{split}
\end{equation*}
Denoting convolution by $\ast$, it follows  from  equation \eqref{cov_formula},
%By equation \eqref{cov_formula}, we get that
\begin{equation} \label{FT_Pi_t}
\begin{split}
\mathcal{F}_{S^2} \left(\Pi_{S^2} t_m\right) (z)
=& \frac{1}{2\pi} J(\psi,z) \bigg<e^{-i \left< \xi,\psi(z) \right>},
\int_{\R^2} \mathcal{P}(\xi-\nu) \overline{J}(\psi, \phi(\nu))^{-1} t_m(\phi(\nu))  d \nu \bigg>_{\R^2} \\
=& J(\psi,z) \mathcal{F}_{\R^2} \left( \mathcal{P}(\xi) * \overline{J}(\psi,\phi(\xi))^{-1} t_m(\phi(\xi)) \right) (\psi(z)), \\
%\end{split}
%\end{equation*}
%where $*$ denotes convolution.  Using the standard rule for taking the Fourier transform of convolutions, we get that
%\begin{equation}
%\label{FT_Pi_t}
%\begin{split}
%\mathcal{F}_{S^2} \left(\Pi_{S^2} t\right) (z)
=& J(\psi,z) \mathcal{F}_{\R^2} \left( \mathcal{P}(\xi)\right)(\psi(z)) \mathcal{F}_{\R^2} \left(\overline{J}(\psi,\phi(\xi))^{-1} t_m(\phi(\xi)) \right) (\psi(z)).
\end{split}
\end{equation}
We now observe that, since $J(\phi,\psi(\xi))^{-1} = \overline{J}(\psi,\xi)$ for $\xi\in \R^2$,
\begin{equation} \label{FT_JinvT}
\begin{split}
\mathcal{F}_{\R^2} \big(\overline{J}& (\psi,\phi(\xi))^{-1} t(\phi(\xi)) \big) (\psi(z))
= \frac{1}{2\pi} \int_{\R^2} e^{i \left< \xi, \psi(z) \right>} J(\phi,\xi) t(\phi(\xi)) d\xi \\
=& -\frac{1}{2\pi} \left<J(\phi,\xi) e^{-i \left<\xi,\psi(z) \right>}, J(\phi,\xi) J(\phi,\xi)^{-1} t(\phi(\xi)) \right>_{\R^2} \\
%\end{split}
%\end{equation*}
%By equation \eqref{cov_formula}, we get that
%\begin{equation}
%\label{FT_JinvT}
%\begin{split}
%\mathcal{F}_{\R^2} \big(\overline{J} (\psi,\phi(\xi))^{-1} t(\phi(\xi)) \big) (\psi(z))
%=& -\frac{1}{2\pi} \left< e^{-i \left<\psi(\xi),\psi(z) \right>},  \overline{J}(\psi,\xi) t(\xi) \right>_{S^2} \\
=& -\frac{1}{2\pi} \int_{S^2} e^{i \left<\psi(\xi),\psi(z)\right>} \overline{J}(\psi,\xi) t(\xi) dS_\xi \\
=& -J(\psi,z)^{-1} \mathcal{F}_{S^2}\left(t\right)(z),
\end{split}
\end{equation}
again by \eqref{cov_formula}.
In addition,  $\mathcal{P}$ if a Fourier multiplier with bounded symbol $\mathcal{F}_{\R^2} \left(\mathcal{P}(\xi) \right)$, since (see e.g. \cite{mcintosh}),
%From, for example, \cite{mcintosh}, we know that
\begin{equation}
\label{FTr_P}
\mathcal{F}_{\R^2} \left(\mathcal{P}(\xi) \right) (\nu)= -\frac{i}{2} \frac{\nu}{\|\nu \|},
\end{equation}
and, therefore, we can pass to the limit $m \rightarrow \infty$ in equation \eqref{FT_Pi_t}.
Finally, we combine this equation with \eqref{FT_JinvT} and  \eqref{FTr_P}, and observe that  for $z\in\R^3$, $J(\psi,z)^{-1} = -J(\psi,z) \| J(\psi,z) \|^{-2}$, so that
\begin{equation*}
 \begin{aligned}
\mathcal{F}_{S^2} \left(\Pi_{S^2} t\right) (z) & = \frac{i}{2} J(\psi,z) \frac{\psi(z)}{\| \psi(z) \|} J(\psi,z)^{-1} \mathcal{F}_{S^2}\left(t\right)(z)\\
%\end{equation*}
%Since, for $z\in\R^3$, $J(\psi,z)^{-1} = -J(\psi,z) \| J(\psi,z) \|^{-2}$, we can write this as
%\begin{equation*}
%\mathcal{F}_{S^2} \left(\Pi_{S^2} t\right) (z)
&= -\frac{i}{2} \frac{J(\psi,z)}{\| J(\psi,z) \|} \frac{\psi(z)}{\| \psi(z) \|} \frac{J(\psi,z)}{\| J(\psi,z) \|} \mathcal{F}_{S^2}\left(t\right)(z).
\end{aligned}
\end{equation*}
%Since $\mathcal{F}_{S^2}$ is an isometry, this immediately yields the desired result
\end{proof}

%This leads directly to
\begin{theorem}
\label{Pi_estimate_thm}
Let $\Gamma=\bigcup_l \partial B(x^l,a^l)$, where $\partial B(x^m,a^m) \cap \partial B(x^n,a^n)= \emptyset$ $\forall m \neq n$.
Let $t \in L^2(\partial \Gamma,\mathcal{A}_3)$.
Then, as an operator from $L^2(\Gamma,\mathcal{A}_3)\rightarrow L^2(\Gamma,\mathcal{A}_3)$, $\| \Pi_{\Gamma} \|\leq \frac{1}{2}$.
\end{theorem}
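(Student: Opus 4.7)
The plan is to reduce the multi-sphere estimate to the single-sphere bound of Theorem~\ref{pi_norm_sphere} via a block decomposition, exploiting the representation of $\Pi_\Gamma t$ as a divergence of a vector-valued single-layer potential that is harmonic away from $\Gamma$.

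First I decompose $t = \sum_n t_n$ with $t_n \in L^2(\partial B(x^n, a^n), \mathcal{A}_3)$ supported on the $n$-th sphere, so that $\|t\|_\Gamma^2 = \sum_n \|t_n\|^2_{\partial B(x^n, a^n)}$. For $\xi \in \partial B(x^m, a^m)$ I split $(\Pi_\Gamma t)(\xi) = (\Pi_{\partial B(x^m,a^m)} t_m)(\xi) + R_m(\xi)$, where $R_m(\xi) := \sum_{n \ne m} \int_{\partial B(x^n,a^n)} \mathcal{P}(\xi-\nu)\cdot t_n(\nu)\, dS_\nu$. The diagonal piece is a principal-value integral on $\partial B(x^m, a^m)$ to which Theorem~\ref{pi_norm_sphere} applies directly, while each integrand in $R_m$ is smooth thanks to the pairwise disjointness of the spheres. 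Since $\mathcal{P}(x) = -\nabla_x G(x)$ with $G(x) = (4\pi|x|)^{-1}$, one has $R_m(\xi) = -\nabla_\xi \cdot W_m(\xi)$ for $W_m(\xi) := \sum_{n\ne m}\int_{\partial B(x^n,a^n)} G(\xi-\nu)\, t_n(\nu)\, dS_\nu$, a vector-valued single-layer potential whose sources lie outside $\overline{B(x^m,a^m)}$. Consequently every component of $W_m$, and therefore $R_m$, is harmonic on $B(x^m, a^m)$, so that $R_m|_{\partial B(x^m,a^m)}$ is the Dirichlet trace of an interior harmonic function and admits an expansion in solid spherical harmonics centered at $x^m$.

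Next I apply the stereographic-projection / Fourier-multiplier analysis of Theorem~\ref{pi_norm_sphere} sphere-by-sphere. On $\partial B(x^m, a^m)$ the diagonal contribution is bounded by $\tfrac12\|t_m\|_{\partial B(x^m, a^m)}$ via the explicit multiplier $-\tfrac{i}{2}\,\psi(z)/\|\psi(z)\|$ of norm $1/2$. The off-diagonal remainder $R_m$, being the trace of an interior harmonic function in $B(x^m, a^m)$, is absorbed into the same multiplier estimate by re-expressing it through the boundary data of $W_m$ and invoking the Hardy-space structure of harmonic extensions into the ball. Summing across $m$ via Parseval on each sphere then yields the desired bound $\|\Pi_\Gamma\| \le 1/2$.

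The principal obstacle lies in this final combination step: a naive triangle inequality between the diagonal and off-diagonal contributions would only give $\|\Pi_\Gamma\| \le 1/2 + \|R\|$, losing the sharp constant. Avoiding this loss requires exploiting the harmonicity of $R_m$ inside each ball $B(x^m, a^m)$ in tandem with the sharp single-sphere multiplier of Theorem~\ref{pi_norm_sphere}, so that the multi-sphere contributions combine—via Parseval on each $\partial B(x^m, a^m)$ and the solid-spherical-harmonic/Hardy identification of harmonic extensions of $W_m$—without inflating the $1/2$ bound beyond the single-sphere value.
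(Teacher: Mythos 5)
Your decomposition into diagonal and off-diagonal contributions is a genuinely different route from the paper's. The paper's proof does \emph{not} split off $R_m$: it introduces the block Fourier transform $\mathcal{F}^B_\Gamma := \sum_l \mathcal{F}_{\partial B(x^l, a^l)}\,\chi_{\partial B(x^l,a^l)}$, observes that $L^2(\Gamma,\mathcal{A}_3) \cong \oplus_l L^2(\partial B(x^l,a^l),\mathcal{A}_3)$ and that $\mathcal{F}^B_\Gamma$ is the corresponding direct-sum isometry, and then simply appeals to Theorem~\ref{pi_norm_sphere} to conclude $\|\Pi_\Gamma\|\le \tfrac12$. In other words, the paper's argument treats $\Pi_\Gamma$ as though it were block-diagonal with respect to the sphere decomposition, and never mentions the cross-sphere terms that you isolate in $R_m$. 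You have correctly identified a point that the paper glosses over.

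However, the step where you ``absorb'' $R_m$ is a genuine gap, and it looks structurally unfixable along the lines you propose. In Theorem~\ref{pi_norm_sphere} the Fourier multiplier is $-\tfrac{i}{2}\,\frac{J(\psi,z)}{\|J(\psi,z)\|}\,\frac{\psi(z)}{\|\psi(z)\|}\,\frac{J(\psi,z)}{\|J(\psi,z)\|}$, i.e.\ $\tfrac12$ times a pointwise unit Clifford element; Clifford multiplication by a unit vector is an isometry in $\mathcal{A}_3$, so $\Pi_{\partial B(x^m,a^m)}$ is $\tfrac12$ times an isometry, not merely an operator of norm $\le\tfrac12$. Hence there is no ``slack'' in the diagonal piece: for $t$ supported on a single sphere $\partial B_1$ one has $\|\Pi_\Gamma t\|_\Gamma^2 = \|\Pi_{\partial B_1}t_1\|_{\partial B_1}^2 + \sum_{m\ne 1}\|R_m\|_{\partial B_m}^2 = \tfrac14\|t_1\|^2 + \sum_{m\ne 1}\|R_m\|^2$, so any nonzero off-diagonal contribution strictly inflates the norm past $\tfrac12$. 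Harmonicity of $R_m$ inside $B(x^m,a^m)$ and the Hardy-space structure of its interior extension give you a solid-harmonic expansion, but they provide no mechanism for the cross term $\mathrm{Re}\langle \Pi_{\partial B_m}t_m,R_m\rangle$ to cancel $\|R_m\|^2$ uniformly in $t$; you would need a very specific algebraic identity that you do not produce. Concretely, taking $t_1$ constant on $\partial B_1$ gives $R_m(\xi)=a_1^2(\xi-x^1)/|\xi-x^1|^3\ne 0$, showing the off-diagonal block is genuinely nontrivial. To close the gap, one either needs to work with the block-diagonal restriction of $\Pi_\Gamma$ from the outset (matching what the paper's proof implicitly does) or produce a concrete cancellation mechanism for the cross-sphere terms; the Hardy/solid-harmonic identification by itself does not supply one.
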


\begin{proof}
We introduce the mapping
\ $
\mathcal{F}^B_\Gamma ( f ) (z):= \sum_l \mathcal{F}_{\partial B(x^l, a^l)} (f) (z) \chi_{\partial B(x^l, a^l)},
$ \ where $\chi_{F}$ is the characteristic function of the set $F$.
We observe that \ $L^2(\Gamma) \cong \oplus_{l} L^2(\partial B(x^l, a^l)$ and by construction $\mathcal{F}^B_\Gamma ( f ) \cong \oplus_{l} \mathcal{F}_{\partial B(x^l, a^l)}$.
Consequently,  $\mathcal{F}^B_\Gamma$ is an isometry on $L^2(\Gamma,\mathcal{A}_3)$ as  $\mathcal{F}_{\partial B(x^l, a^l)}$ is an isometry on
$(L^2(\partial B(x^l, a^l)))^3$.  Theorem \ref{pi_norm_sphere} then implies that $\| \Pi_{\Gamma} \|\leq \frac{1}{2}$.
\end{proof}

%In light of equations \eqref{bie_p} and Theorem \ref{Pi_estimate_thm}, we have
%\begin{equation}
%\label{initial_estimate}
%\begin{split}
%%\| t \|_{(L^2(\partial \Omega_F))^3} \leq 2 \eta \| \nabla u \|_{(L^2(\partial \Omega_F))^{3\times 3}}+ 2\eta\left| \int_{\partial \Omega_F} n \cdot \nabla_{\xi} \mathcal{P}(\xi,\nu) \cdot u(\nu) d\nu \right|.
%\| p \|_{L^2(\partial \Omega_F)} \leq & \frac{1}{2} \| t \|_{(L^2(\partial \Omega_F))^3}
%%\\ &
%+ 2\eta\left\| \nabla_{\xi} \cdot \int_{\partial \Omega_F} n   \mathcal{P}(\xi-\nu) \cdot u(\nu) dS_\nu \right\|_{(L^2(\partial \Omega_F))^3}.
%\end{split}
%\end{equation}

%In order to study
We now turn to study the second integral on the right hand side of equation \eqref{bie_p},
which we rewrite
% we will first rewrite it.
making the change of variables $\zeta=\xi-\nu$, as
\begin{equation}
\label{P*nu_1}
%\begin{split}
%\int_{\partial \Omega_F} & n_i(\nu) \frac{\partial}{\partial \xi_i} \mathcal{P}_j(\xi-\nu) u_j (\nu) d\nu
\nabla_\xi \cdot  \int_{\partial \Omega_F}  n(\nu) \mathcal{P}(\xi-\nu) \cdot u (\nu) \, dS_\nu
%\\ &
= \nabla_\xi \cdot  \int_{ \xi-\partial \Omega_F}  n(\xi-\zeta) \mathcal{P}(\zeta) \cdot u (\xi-\zeta) \, dS_\zeta,
%\end{split}
\end{equation}
where $\xi-\partial \Omega_F : = \{ \zeta = \xi-\nu \ ; \ \nu \in \partial \Omega_F\}$.  Recall that this integral must be interpreted in a principal value sense.
%The derivative here can be taken under the integral sign in the sense of distributions (that is, $\nabla \Pi \Phi = \Pi \nabla \Phi$ in a distributional sense).  However, we know that $n_i u_j \in H^1 (\partial\Omega_F)$ $\forall 1\leq i,j\leq 3$ and that $\Pi$ is a bounded operator from $L^2(\partial\Omega_F)\rightarrow L^2(\partial\Omega_F)$ (this is precisely the result of Theorem \ref{Pi_estimate_thm}).  Therefore, $\nabla \Pi \Phi = \Pi \nabla \Phi$ for $\Phi \in H^1(\partial\Omega_F)$.
We temporarily assume   $u \in C^\infty( \partial \Omega_F)$ and note that  in the sense of distributions $\nabla \Pi u = \Pi \nabla u$.
%In particular, we can apply Leibniz's integral rule to equation \eqref{P*nu_1}, yielding
Consequently, from the definition of distributional derivative:
\begin{align}
\nabla_\xi \cdot &  \int_{ \xi-\partial \Omega_F}  n(\xi-\zeta) \mathcal{P}(\zeta) \cdot u (\xi-\zeta) dS_\zeta \nonumber \\
=& \int_{\xi-\partial \Omega_F} \left[ \nabla_\xi \cdot n(\xi-\zeta) \mathcal{P}(\zeta)\cdot u (\xi-\zeta)
+ n(\xi-\zeta) \cdot \left(\nabla_\xi u (\xi-\zeta) \mathcal{P}(\zeta) \right)\right] dS_\zeta \nonumber \\
=&- \int_{\xi-\partial \Omega_F} \left[ \nabla_\zeta \cdot n(\xi-\zeta) \mathcal{P}(\zeta) \cdot u (\xi-\zeta)
%\right. \\
%&\left.
+ n(\xi-\zeta) \cdot \left(\nabla_\zeta  u (\xi-\zeta) \mathcal{P}(\zeta) \right)\right] dS_\zeta \nonumber \\
%\end{align}
%Making the change of variables $\nu=\xi-\zeta$, we get
%\begin{align}
%\nabla_\xi \cdot & \int_{ \xi-\partial \Omega_F}  n(\xi-\zeta) \mathcal{P}(\zeta) \cdot u (\xi-\zeta) dS_\zeta \nonumber \\
=&\int_{\partial \Omega_F} \left[ \nabla_\nu \cdot n(\nu) \mathcal{P}(\xi-\nu) \cdot u (\nu)
+ n(\nu) \cdot \left( \nabla_\nu u(\nu) \mathcal{P}(\xi-\nu)\right)\right] dS_\nu, \nonumber
\end{align}
where the last equality follows by setting $\nu=\xi-\zeta$. Since $\mathcal{P}$ is a Fourier multiplier in $L^2$, this last expression is valid by density for any $u\in H^1(\partial \Omega_F)$.
Furthermore,  for $x\in\partial B(x^l,a)$, $\nabla \cdot n = \frac{3}{a}$ (or $-\frac{3}{a}$ if $n$ is the inward-facing normal). Thus, for $\Omega_F:= B(0,R) \setminus \cup_{l=1}^N B(x^l,a)$,
\begin{equation*}
\nabla_{\xi} \cdot \int_{\partial \Omega_F} n  \mathcal{P}(\xi-\nu) \cdot u(\nu) dS_\nu = -\sum_{l=1}^N \frac{3}{a}\Pi_{\partial B(x^l,a)} u + \frac{3}{R}\Pi_{\partial B(0,R)} u +
\Pi_{\partial \Omega_F} \frac{\partial u}{\partial n}.
\end{equation*}

Under the hypotheses of Theorem \ref{final_estimate_theorem}, Theorem \ref{Pi_estimate_thm} then gives
%, we have that (for $\Omega_F$ as described in the statement of Theorem \ref{final_estimate_theorem}),
\begin{equation}
\label{second_integral_estimate}
\bigg\| \nabla_{\xi} \cdot \int_{\partial \Omega_F} n \mathcal{P}(\xi-\nu) \cdot u(\nu) dS_\nu \bigg\|_{\partial \Omega_F}
\leq \frac{3}{2a} \left \| u \right \|_{\cup_l \partial B^l}
+ \frac{3}{2R} \left \| u \right \|_{\partial B(0,R)}
+ \frac{1}{2} \left \| \frac{\partial u}{\partial n} \right \|_{\partial \Omega_F}.
\end{equation}
Finally, combining equations \eqref{bie_p}, \eqref{second_integral_estimate}, and Theorem \ref{Pi_estimate_thm}, we obtain
\begin{equation*}
%\| t \|_{(L^2(\partial \Omega_F))^3} \leq 3 \eta\left \| u \right \|_{(L^2(\partial \Omega_F))^3} + 3 \eta \| \nabla u \|_{(L^2(\partial \Omega_F))^{3\times 3}}.
\| p \|_{\partial \Omega_F} \leq \frac{1}{2} \| t \|_{\partial \Omega_F} + \frac{3\eta}{a} \left \| u \right \|_{\cup_l \partial B^l}
+ \frac{3\eta}{R} \left \| u \right \|_{\partial \Omega}+ \eta\left\| \frac{\partial u}{\partial n} \right\|_{\partial \Omega_F}.
\end{equation*}

The results in this section can be extended to other particle shapes by using mappings from their surfaces to $S^2$.

\section{Error estimate}
\label{sec_error_estimate}

With estimate \eqref{final_estimate_traction} on the boundary tractions in place, we can now turn to establishing the needed error bounds for the effective viscosity.
As in Section \ref{sec_setup}, we model a suspension of rigid spheres of radius $a$,  centered on the lattice $\mathbb{Z}^3 \cap B(0,R-1)$, that is, $d=1$. % and contained inside a sphere of radius $R\propto N^{1/3}$.
The limit $\phi:= \frac{4\pi}{3} a^3 \rightarrow 0$ then corresponds to $a\rightarrow 0$ for a fixed, but arbitrary, number of spheres $N$.
Recall that the effective viscosity is defined by
\begin{equation}
\label{ev_energy_rep}
\hat{\eta} = (2 | \Omega | \epsilon : \epsilon)^{-1} E(u),
\end{equation}
where $E(u ):= 2 \eta \int_{\Omega_F} e : e \, dx$,
$e = \frac{1}{2} \left( \nabla u + \nabla u^T\right)$,
and $\epsilon$ is the constant, symmetric, trace free matrix describing the boundary conditions in equation \eqref{suspension_pde}.
%since, noting that the particles are force and torque-free (i.e., $\int_{\partial B^l} \sigma_{ij} n_j dS = 0$ and $\int_{\partial B^l} \sigma_{ij} n_j \epsilon_{ikm} (x_k-x^l_k) \, dx=0$),
%\begin{equation*}
%\begin{split}
%\frac{1}{2 | \Omega | \epsilon : \epsilon}E(u)=& \frac{1}{2 | \Omega | \epsilon : \epsilon} 2 \eta \int_{\Omega_F} e_{ij} e_{ij} \, dx = \frac{1}{2 | \Omega | \epsilon : \epsilon}\int_{\Omega_F} \sigma_{ij} e_{ij} \, dx \\
%=& \frac{1}{2 | \Omega | \epsilon : \epsilon}\int_{\partial \Omega} \sigma_{ij} \epsilon_{ik} x_k n_j dS,
%\end{split}
%\end{equation*}
%which is equivalent to definition \eqref{definition_of_ev} (see Appendix \ref{app-def_ev}).
We will bound $\hat{\eta}$ below and above by quantities which have the same asymptotic behavior to order $\phi^{3/2}$.
%, proving that $\hat{\eta}$ has the same asymptotic behavior to order $\phi^{3/2}$.  The result is

\begin{theorem}\label{thm_ev_arb_n}
%Using definition \eqref{ev_energy_rep}, t
There exists $C>0$ independent of $a$, $N$, and hence of $\phi:=\frac{4\pi}{3} a^3$, such that $\forall 0<\phi<\frac{\pi}{6}$ the effective viscosity $\hat{\eta}$ satisfies
\begin{equation*}
\left|\hat{\eta}-\eta\left(1+\frac{5}{2} \phi\right) \right| \leq C \eta \phi^{3/2}.
\end{equation*}
\end{theorem}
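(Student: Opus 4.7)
The plan is to bracket $\hat\eta$ with a lower and an upper bound that each match the Einstein expression $\eta(1+\tfrac52\phi)$ to within $O(\eta\phi^{3/2})$. The lower direction is already in hand: Lemma~\ref{lower_bound_lemma} gives $\eta(1+\tfrac52\phi)\le\hat\eta+C\eta\phi^{5/3}$, and since $5/3>3/2$ one direction of \eqref{goal_intro} follows immediately. For the upper direction I will first invoke the dissipation-minimization property of Stokes flow with prescribed boundary data: among divergence-free fields agreeing with $\epsilon x$ on $\partial\Omega$ and reducing to some rigid body motion on each $\partial B^l$, the true flow $u$ minimizes $E(\cdot)$. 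The auxiliary flow $u^+$ from Section~\ref{sec_procedure} is admissible (corresponding to $v^l=\epsilon x^l$, $\omega^l=0$), so $E(u)\le E(u^+)$ and hence $\hat\eta\le\hat\eta(u^+)$. Lemma~\ref{lemma_ev_representation_arb_n} then reduces the task to establishing
\[
T := \frac{1}{2|\Omega|\,\epsilon:\epsilon}\left|\sum_l \epsilon:\int_{\partial B^l}\sigma^E n\,(x-x^l)\,dS\right|\le C\eta\phi^{3/2},
\]
with $u^E:=u^+-u^d$ and $\sigma^E$ its stress, since the remaining error term in that lemma is already $O(\eta\phi^{5/3})$.

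The passage from $T$ to a boundary norm of the tractions is handled by two applications of Cauchy--Schwarz. Using $|x-x^l|=a$ and $|\partial B^l|=4\pi a^2$ on each sphere one obtains $|\int_{\partial B^l}\sigma^E n(x-x^l)\,dS|\le 2\sqrt\pi\,a^2\|\sigma^E n\|_{\partial B^l}$, and a second Cauchy--Schwarz over the $N$ spheres, combined with $|\Omega|\propto N$, gives $T\le Ca^2N^{-1/2}\|\sigma^E n\|_{\cup_l\partial B^l}$. Corollary~\ref{final_t_estimate} then bounds $\|\sigma^E n\|_{\partial\Omega_F}$, which dominates its restriction to the spheres, by $C\eta$ times $N^{1/2}a^{-1/2}\|u^E\|_{\partial\Omega_F}+a^{-1}\|u^E\|_{\cup_l\partial B^l}+\sum_i\|\partial u^E/\partial\tau^i\|_{\partial\Omega_F}$.

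Controlling these three $u^E$-norms on the boundary is where the explicit structure of the dilute solution enters. Since $u^+$ equals $\epsilon x$ on $\partial\Omega$ and $\epsilon x^l$ on $\partial B^l$, while $u^d=\epsilon x+\sum_m w_m$ is built from the known single-sphere disturbances $w_m$ satisfying $w_l(x)=-\epsilon(x-x^l)$ on $\partial B^l$ and decaying at the far-field rate $O(a^3|x-x^m|^{-2})+O(a^5|x-x^m|^{-4})$, the boundary trace of $u^E$ is fully explicit: on $\partial B^l$ the self-term cancels, leaving $u^E|_{\partial B^l}=-\sum_{m\ne l}w_m|_{\partial B^l}$, and on $\partial\Omega$, $u^E|_{\partial\Omega}=-\sum_m w_m|_{\partial\Omega}$. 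Analogous closed-form expressions are available for the tangential derivatives by differentiating the explicit $w_m$.

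The main obstacle will be evaluating the resulting lattice sums $\sum_{m\ne l}|x^l-x^m|^{-k}$ \emph{uniformly in $N$}, so as to defeat the $N^{1/2}$ amplification in Corollary~\ref{final_t_estimate}. A naive pointwise bound on the slowest-decaying stresslet term yields only $O(\phi\,N^{1/3})$ pointwise control of $|u^E|$, which when squared against the $O(N^{2/3})$ area of $\partial B(0,R)$ fails to reach the $\phi^{3/2}$ target. The symmetric, traceless tensor structure of the stresslet $\sim (\epsilon{:}xx)x/r^5$ should produce enough cancellation across the cubic lattice $\mathbb{Z}^3$ to render these sums $N$-independent; once this is achieved, the algebraic factors $a^2\cdot N^{-1/2}\cdot N^{1/2}a^{-1/2}=a^{3/2}\sim \phi^{1/2}$ from the upper-bound pipeline, combined with the $a^3\sim\phi$ coming from the disturbance amplitude, assemble to the sharp scale $\phi^{3/2}$ and complete the proof.
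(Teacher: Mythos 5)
Your bracketing strategy is exactly the paper's (Lemmas~\ref{lower_bound_lemma}, \ref{upper_bound_lemma}, \ref{lemma_ev_representation_arb_n}, Corollary~\ref{final_t_estimate}), and you correctly identify that the crux is to obtain $N$-uniform control of the lattice sums so that the boundary norms of $u^E$ scale correctly. However, the mechanism you propose to achieve that control does not work, and a key ingredient of the paper's argument is absent from your construction.

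Your $u^d$ is defined without the free constant $C^d$ that the paper builds into \eqref{new_u^d}, and that omission is not cosmetic. The bare sum $\sum_m u^{1\prime}(x-x^m)$ evaluated at a point $x\in\partial\Omega$ is, to leading order, $a^3\sum_{z\in x-\Lambda}\frac{z(\epsilon:zz)}{|z|^5}$ summed over a lattice ball $x-\Lambda$ which is \emph{centered at $x$, not at the origin}. The stresslet kernel is odd, so it cancels over a centrally symmetric region, but $x-\Lambda$ is not centrally symmetric about the origin when $x$ is on the outer boundary: a continuum approximation over the shifted ball $B(x,R)$ leaves two lune-shaped regions whose contributions add up to $O(a^3 R)=O(\phi N^{1/3})$. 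Thus the tensor/tracelessness structure alone does not render the sums $N$-independent; exactly the $O(\phi N^{1/3})$ pointwise growth you flag as the obstacle persists. The paper resolves this in two linked ways: (i) the constant $C^d$ absorbs the bulk, $N$-growing part of $\sum_m u^{1\prime}$, being chosen to minimize $\|u^E\|_{L^\infty(\partial\Omega_F)}$; and (ii) the residual \emph{fluctuation} is controlled by the regularized lattice-sum estimates of Lemmas~\ref{sum_converges_lemma} and~\ref{sum_converges_lemma_3}, which bound $\sum_z\left(|x-z|^{-k}-|z|^{-k}\right)$ uniformly in the truncation radius — a subtraction trick that has no counterpart in your proposal. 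Without $C^d$ and those lemmas, your pipeline lands at $O(\eta\phi^{3/2}N^{1/6})$ rather than $O(\eta\phi^{3/2})$.

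Two smaller remarks. First, the boundary trace you write for $u^d$, namely $w_l(x)=-\epsilon(x-x^l)$ on $\partial B^l$, differs from the paper's by the $\epsilon x^l$ shift and the subtraction of $C^d$; be careful, because the $\epsilon x^l$ shift is what ties $u^d$ to the data of $u^+$ on the spheres. Second, the step $T\le C a^2 N^{-1/2}\|\sigma^E n\|_{\cup_l\partial B^l}$ is fine via Cauchy–Schwarz, and you are right that $\|\sigma^E n\|_{\partial\Omega_F}$ dominates the restricted norm, but after applying Corollary~\ref{final_t_estimate} you must account for the full boundary $\partial\Omega_F$ including the outer sphere, which is precisely where the missing regularization bites.
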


\begin{remark}
The upper bound for $\phi$ comes from the condition that the spherical inclusions do not overlap.
\end{remark}

%In order to prove Theorem \ref{thm_ev_arb_n}, we will need several preliminary Lemmas.
\begin{proof}
The proof of Theorem \ref{thm_ev_arb_n}
%is then given in Section \ref{sec_thm_3_1_proof}.
is an immediate consequence of Lemmas \ref{lower_bound_lemma}, \ref{upper_bound_lemma}, and \ref{lemma_ev_upper_bound}, given below.
\end{proof}

\subsection{Lower bound for the effective viscosity}

In order to obtain a lower bound for $\hat{\eta}$, we will use a similar technique to that used in \cite{keller1967}.  As in \cite{keller1967}, we will bound $E(u)$ from below, but we will use the dilute approximation to $u$ (the solution of equation \eqref{suspension_pde}) given by
\begin{equation}
\label{u^-}
\begin{split}
u^-(x)&:=\epsilon x + \sum_{l=1}^N \left[u^1(x-x^l) -\epsilon x\right],
\end{split}
\end{equation}
where $u^1$ solves
\begin{equation}
\label{dilute_problem}
\left\{
\begin{array}{lr}
\eta \Delta u^1 = \nabla p^1 \text{, } \nabla \cdot u^1 = 0 & x\in \R^3 \setminus B(0,a) \\
% & x\in \R^3 \setminus B(0,a) \\
u^1 = 0 & x \in \partial B(0,a) \\
u^1 \rightarrow \epsilon x & x\rightarrow \infty \\
\end{array}
\right.
\end{equation}
and is given by $u^1=\epsilon x + u^{1\prime}$, with
\begin{equation}
\label{u^1prime}
u^{1\prime}(x) =- \epsilon x \frac{a^5}{|x|^5}- x ( \epsilon:  x x ) \frac{5}{2} \left(\frac{a^3}{|x|^5}-\frac{a^5}{|x|^7}\right).
\end{equation}
The corresponding pressure is given by
\begin{equation}
\label{p^1prime}
p^1(x)=p^{1\prime}(x) = -5a^3\eta \epsilon : \frac{x x}{|x|^5}.
\end{equation}

In order to obtain error estimates independent of the number of inclusions $N$, we will need to regularize various sums.  We begin with two lemmas.

%To do this, we will need the following two Lemmas:
%\begin{lemma}
%\label{sum_converges_lemma}
%For all $x\in B(0,1)$,
%\begin{equation}
%\label{sum_2}
%\left| \sum_{z\in \mathbb{Z}^3\setminus \{0\}} \left( \frac{1}{|x-z|^2} -\frac{1}{|z|^2} \right) \right| \leq \pi.
%\end{equation}
%\end{lemma}
\begin{lemma}
\label{sum_converges_lemma}
For all $x\in B\left(0,\frac{\sqrt{3}}{2}\right)$ and all $0 \leq \rho < \infty$,
\begin{equation}
\label{sum_2}
\left| \sum_{z\in \mathbb{Z}^3\setminus \{0\} \cap B(0,\rho)} \left( \frac{1}{|x-z|^2} -\frac{1}{|z|^2} \right) \right| \leq (876 + 504 \sqrt{3})\pi.
\end{equation}
\end{lemma}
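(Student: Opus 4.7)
The plan is to exploit the cancellation between $1/|x-z|^2$ and $1/|z|^2$ for large $|z|$, combined with the central symmetry of the truncated lattice $\mathbb{Z}^3\setminus\{0\}\cap B(0,\rho)$ under $z\mapsto -z$. A direct computation gives
\[
\frac{1}{|x-z|^2} - \frac{1}{|z|^2} = \frac{|z|^2 - |x-z|^2}{|x-z|^2\,|z|^2} = \frac{2\,x\cdot z - |x|^2}{|x-z|^2\,|z|^2},
\]
which decays only like $|z|^{-3}$ as $|z|\to\infty$, too slowly for absolute convergence on $\mathbb{Z}^3$. The leading $|z|^{-3}$ contribution is however odd in $z$, so pairing each $z$ with $-z$ (both of which lie in $B(0,\rho)$ since the ball is centrally symmetric) yields
\[
\frac{1}{|x-z|^2} + \frac{1}{|x+z|^2} - \frac{2}{|z|^2} = \frac{8\,(x\cdot z)^2 - 2\,|x|^2(|x|^2+|z|^2)}{|x-z|^2\,|x+z|^2\,|z|^2}.
\]
Cauchy--Schwarz bounds the numerator by $2|x|^2(5|z|^2+|x|^2)$, exhibiting an improved $|z|^{-4}$ decay per pair that makes the sum absolutely convergent, uniformly in $\rho$.

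Next I would split $\mathbb{Z}^3\setminus\{0\}$ into a near-field part with $|z|^2 \in\{1,2\}$ (the $18$ lattice points, i.e., $9$ symmetric pairs closest to the origin) and a far-field part with $|z|^2\geq 3$. The hypothesis $|x|\leq\sqrt{3}/2<1$ guarantees $|x\pm z|\geq |z|-|x|\geq 1-\sqrt{3}/2>0$ for every nonzero lattice point, so each individual summand is well-defined. The near-field contribution is a finite sum of explicit algebraic quantities in $\sqrt{3}$, which I would bound term by term using the worst-case distance $|x\pm z|\geq |z|-\sqrt{3}/2$; for the far-field pairs, $|x|\leq |z|/2$ gives $|x\pm z|\geq |z|/2$ and hence $|x-z|^2|x+z|^2\geq |z|^4/16$, producing
\[
\left|\frac{1}{|x-z|^2}+\frac{1}{|x+z|^2}-\frac{2}{|z|^2}\right| \leq \frac{32\,|x|^2(5|z|^2+|x|^2)}{|z|^6} \leq \frac{C}{|z|^4}
\]
for an explicit $C$ depending only on the bound $|x|\leq\sqrt{3}/2$.

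Finally I would estimate the convergent lattice sum $\sum_{z\in\mathbb{Z}^3,\,|z|\geq\sqrt{3}} |z|^{-4}$ via the cube-tiling comparison: each lattice point $z$ is the center of a unit cube, on which $|y|\leq|z|+\sqrt{3}/2$, so $|z|^{-4}$ is controlled by the integral $\int_{\sqrt{3}}^\infty t^{-5}(t+\sqrt{3}/2)^3\,dt$ after an Abel-summation step. The integrand is a polynomial in $1/t$ integrated against $dt$, and evaluates to a rational multiple of $\sqrt{3}$. Combining the near- and far-field contributions with the worst-case constants from the pair-bound yields the stated $(876+504\sqrt{3})\pi$, where the $\pi$ originates from the spherical integral $\int 4\pi t^{-5}(t+\sqrt{3}/2)^3\,dt$ and the $\sqrt{3}$ from the cube half-diagonal $\sqrt{3}/2$. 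The main obstacle is not conceptual but one of bookkeeping: matching the explicit constant requires a careful choice of near/far splitting radius and tracking how $|x|\leq\sqrt{3}/2$ propagates through each estimate. Uniformity in $\rho$ comes automatically, since the truncated lattice is $z\mapsto-z$-symmetric for every $\rho\geq 0$, so no boundary terms arise from truncation.
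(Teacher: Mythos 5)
Your proposal is correct in its essential structure, and it is a genuinely more explicit argument than what the paper supplies. The paper's own ``proof'' is a one-sentence sketch: replace the sum by an integral, pass to spherical coordinates, check convergence by hand, and let Mathematica do the arithmetic. No mechanism for the crucial cancellation is identified — and the summand $(2x\cdot z - |x|^2)/(|x-z|^2|z|^2)$ decays only like $|z|^{-3}$, so a naive sum-to-integral comparison is not absolutely convergent. Presumably the paper's spherical-coordinate step effects the cancellation through the angular integral (the average of $1/|x-r\omega|^2$ over $\omega\in S^2$ differs from $1/r^2$ by $O(r^{-4})$, via the standard potential-theoretic computation), but this is left unsaid. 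Your $z\mapsto -z$ pairing replaces that implicit angular cancellation by an explicit algebraic identity, upgrading the decay to $|z|^{-4}$ pointwise, which makes the remainder absolutely summable and removes any delicacy from the sum-versus-integral step. The near/far split at $|z|^2 = 3$ is exactly the right threshold since it guarantees $|x| \leq |z|/2$ in the far field, and the Abel-summation comparison (with the lattice-counting bound $\#\{z : 0<|z|\leq t\} \leq \tfrac{4\pi}{3}(t+\tfrac{\sqrt{3}}{2})^3$) does close the far-field estimate cleanly. I verified that your far-field pair bound $32|x|^2(5|z|^2+|x|^2)/|z|^6$ is correct, and a rough tally of your near-plus-far contributions lands comfortably below $(876+504\sqrt{3})\pi$, so your route proves the lemma with room to spare even if the bookkeeping does not reproduce the paper's exact algebraic constant (which involves $\sqrt{3}$ only, whereas a term-by-term near-field estimate with $|x|\leq\sqrt{3}/2$ at $|z|^2=2$ naturally produces $\sqrt{6}$; the paper's constant is in any case never used downstream except as a generic $C$). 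The only small point to be cautious about in the write-up: the naive cube-tiling inequality $1/|z|^4 \leq 1/(|y|-\sqrt{3}/2)^4$ for $y\in Q_z$ gives a divergent integral near $|y| = \sqrt{3}/2$; your Abel-summation route, or the alternative inequality $|z| \geq \tfrac{2}{3}|y|$ valid on $Q_z$ whenever $|z|\geq\sqrt{3}$, avoids this — just make that explicit.
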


%\begin{proof}
%Let $I(x)$ denote the sum in equation \eqref{sum_2}
%\begin{equation*}
%I(x) := \sum_{z\in \mathbb{Z}^3\setminus \{0\}} \left( \frac{1}{|x-z|^2} -\frac{1}{|z|^2} \right)
%\end{equation*}
%and $x\in \left\{ |x|\leq 1 \right\}\setminus \left\{0\right\}$.
%and note that $I(x+l) = I(x)$ $\forall l \in \mathbb{Z}^3$.  Therefore, without loss of generality, we can assume that $|x| <1$.
%Then,
%\begin{align}
%|I|&  = \left| \sum_{z\in \mathbb{Z}^3\setminus \{0\}} \frac{|z|^2 - |x-z|^2}{|z|^2 |x-z|^2} \right| \nonumber
%\\ &
%\leq  \int_{0}^{2\pi} \int_{1}^\infty \int_{0}^\pi \frac{\left|-|x|^2 + 2 |x| r \cos \varphi\right| }{r^2 (r^2+|x|^2 - 2 |x| r \cos \varphi)} r^2 \sin \varphi d\varphi dr d\theta \nonumber \\
%&= \pi \left[ |x| + \left( \frac{1}{|x|}-|x| \right) \log (1-|x|^2) \right] \leq \pi. \nonumber
%\end{align}
%\end{proof}

\begin{lemma}
\label{sum_converges_lemma_3}
For all $x\in B(0,\frac{\sqrt{3}}{2})$, the sum
\begin{equation}
\label{sum_3}
\left|\sum_{z\in \mathbb{Z}^3\setminus \{0\}} \left( \frac{1}{|x-z|^3} -\frac{1}{|z|^3} \right) \right| \leq 2\pi \left(159 + 92 \sqrt{3} + \log 4 - \log (7-4\sqrt{3}) \right).
\end{equation}
\end{lemma}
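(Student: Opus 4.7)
The summand $f(z):=|x-z|^{-3} - |z|^{-3}$ decays as $O(|x|/|z|^4)$ at infinity, as follows from the integral representation
\begin{equation*}
f(z) = -3\int_0^1 \frac{(tx-z)\cdot x}{|tx-z|^5}\,dt,
\end{equation*}
so absolute convergence of the series is automatic; the content of the lemma is the explicit constant. The plan is to split $\mathbb{Z}^3\setminus\{0\}$ into a \emph{near} set (the $26$ lattice points with $|z|_\infty = 1$) and a \emph{far} set ($|z|_\infty\geq 2$), bound the two contributions separately, and add them. The approach parallels that of Lemma \ref{sum_converges_lemma}, but the tail is now absolutely summable so no cutoff parameter $\rho$ is needed.

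For the near set, I would enumerate directly. Using $|x|<\sqrt{3}/2$ and $|z|\geq 1$, the triangle inequality gives $|x-z|\geq 1-\sqrt{3}/2 = (2-\sqrt{3})/2$, so $|x-z|^{-3}\leq 8(2+\sqrt{3})^3$, while $|z|^{-3}\leq 1$. Summing the $26$ such terms and simplifying produces an expression of the form $a + b\sqrt{3}$ with explicit integers $a$, $b$, contributing to the $159 + 92\sqrt{3}$ portion of the bound.

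For the far set, I would compare the lattice sum with the corresponding Riemann integral. With $C_z := z + [-\tfrac12,\tfrac12]^3$ and $A := \bigcup_{|z|_\infty\geq 2}C_z = \{y:|y|_\infty \geq \tfrac32\}$, write
\begin{equation*}
f(z) = \int_{C_z} f(y)\,dy + \int_{C_z}\bigl(f(z)-f(y)\bigr)\,dy,
\end{equation*}
and bound the Lipschitz error by $\mathrm{diam}(C_z)\sup_{C_z}|\nabla f|$; since $|x|<\sqrt{3}/2$ keeps $x$ uniformly away from $A$, one has $|\nabla f(y)|\leq 3(|x-y|^{-4}+|y|^{-4}) = O(|z|^{-4})$ on each $C_z$, so the total error is a convergent and explicitly summable tail. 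The remaining leading term $\int_A f(y)\,dy$ is finite even though each of $\int_A |x-y|^{-3}\,dy$ and $\int_A|y|^{-3}\,dy$ diverges logarithmically at infinity: truncating to $A\cap B(0,R)$ and sending $R\to\infty$ cancels the two divergences. In spherical coordinates about $0$ and about $x$, the remaining integrals reduce to $\int r^{-1}\,dr$ between radii determined by the distances from $0$ and $x$ to $\partial A$, producing the logarithmic contribution $\log 4 - \log(7-4\sqrt{3})$ (with $7-4\sqrt{3} = (2-\sqrt{3})^2$ reflecting the minimal admissible distance $(2-\sqrt{3})/2$).

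The main technical obstacle will be the explicit evaluation of $\int_A f(y)\,dy$: since $\partial A$ is a cube rather than a sphere, spherical coordinates do not match the geometry of $A$. A practical route is to replace $A$ by the larger region $B(0,3/2)^c \supset A$, on which the spherical integrals are exact, and bound the correction on $B(0,3/2)^c\setminus A$ by its (bounded) volume times $\sup|f|$ there; these corrections feed into the polynomial part of the constant. Everything else is routine bookkeeping.
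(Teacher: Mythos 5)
The paper's own ``proof'' of this lemma consists of a single sentence: replace the sums with integrals, use spherical coordinates, and compute the explicit constants with Mathematica. Your overall plan --- near/far split, sum-to-integral comparison with a Lipschitz error term, cancellation of the logarithmic divergences, correction terms from replacing the cube $\partial A$ with a sphere --- is therefore consistent in spirit with the paper, and the integral-representation argument for absolute convergence is a nice, clean addition that the paper omits. So there is no \emph{conceptual} mismatch.

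There is, however, a concrete quantitative gap in the near-field estimate as you have written it, and it is not just bookkeeping. You apply the worst-case lower bound $|x-z|\geq 1-\sqrt{3}/2=(2-\sqrt{3})/2$ uniformly to all $26$ lattice points with $|z|_\infty=1$, giving each term the bound $|x-z|^{-3}+|z|^{-3}\leq 8(2+\sqrt{3})^3+1 = 209+120\sqrt{3}\approx 417$, hence a near-field total of about $26\times 417\approx 1.1\times 10^4$. But the target constant is
\begin{equation*}
2\pi\left(159+92\sqrt{3}+\log 4 - \log(7-4\sqrt{3})\right)\approx 2.0\times 10^3,
\end{equation*}
so your near-field bound alone already overshoots the \emph{entire} right-hand side by roughly a factor of $5$, before the far-field and logarithmic contributions are even added. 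The difficulty is geometric: the distance $|x-z|$ can be as small as $1-\sqrt{3}/2$ for at most one of the $26$ near lattice points at a time (the one roughly aligned with $x$); the other near points have $|x-z|\gtrsim 1$. A uniform minimum-distance bound throws this away. To salvage the step you would need to stratify the $26$ points by $|z|\in\{1,\sqrt{2},\sqrt{3}\}$ and, even within the $|z|=1$ shell, exploit that only one of the six axis directions can be close to aligned with $x$ (for instance, bound $\sum_{|z|=1}|x-z|^{-3}$ directly, noting that at most one summand can exceed $8$). Without such a refinement the plan does not produce the stated constant. Everything else --- the far-field Lipschitz error, the cancellation of the two $\log R$ divergences, and the spherical-complement replacement of $A$ with bounded-volume corrections --- is sound in outline and of the right order of magnitude.
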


%\begin{proof}
%Let $I(x)$ denote the sum in equation \eqref{sum_3}.  Since $|x| \leq \frac{\sqrt{3}}{2}$,
%\begin{equation*}
%\begin{split}
%I(x) &
%:= \left|\sum_{z\in \mathbb{Z}^3\setminus \{0\}} \left( \frac{1}{|x-z|^3} -\frac{1}{|z|^3} \right) \right|
%\leq  \sum_{z\in \mathbb{Z}^3\setminus\{0\}} \frac{ \left| |z|^3-|x-z|^3 \right|}{|z|^3 |x-z|^3} \\
%& \leq \sum_{z\in \mathbb{Z}^3\setminus\{0\}} \frac{ |x|^3 + 3 |x| |z|^2 + 3 |z| |x|^2}{|z|^3 \left( |z|^3 - 3|z|^2 |x|+ 3 |z| |x|^2 - |x|^3 \right)} \\
%& \leq \int_{0}^{2\pi} \int_{0}^{\pi} \int_1^\infty \frac{ |x|^3 + 3 |x| r^2 + 3 r |x|^2}{r^3 \left( r^3 - 3r^2 |x|+ 3 r |x|^2 - |x|^3 \right)} r^2 \sin \varphi dr d\varphi d\theta \\
%& \leq \int_{0}^{2\pi} \int_{0}^{\pi} \int_0^\infty \frac{ |x|^3 + 3 |x| r^2 + 3 r |x|^2}{r^3 \left( r^3 - 3r^2 |x|+ 3 r |x|^2 - |x|^3 \right)} r^2 \sin \varphi dr d\varphi d\theta \\
%& \leq 2\pi \left(159 + 92 \sqrt{3} + \log 4 - \log (7-4\sqrt{3}) \right) \leq 2,026.
%\end{split}
%\end{equation*}
%\end{proof}

Both estimates follow by replacing the sums with integrals and using spherical coordinates to bound the integrals.  The convergence of the integrals was checked by hand and the explicit numerical bounds were computed by Mathematica.

Next, we show below that we can use $u^-$, defined in equation \eqref{u^-}, to obtain a lower bound for $\hat{\eta}$.
\begin{lemma}
\label{lower_bound_lemma}
For $\hat{\eta}$ defined in equation \eqref{ev_energy_rep}, there exists $C>0$ independent of $\phi$ and $N$ such that \  $\forall \phi < \frac{\pi}{6}$,
\quad $
\eta \left(1+\frac{5}{2} \phi \right) \leq \hat{\eta} + C \eta \phi^{5/3}.
$
\end{lemma}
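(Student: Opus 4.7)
The plan is to exploit the pointwise decomposition $e(u)=e(u^-)+e(u-u^-)$, which upon squaring and integrating yields the identity
\begin{equation*}
E(u)=E(u^-)+4\eta\int_{\Omega_F}e(u^-):e(u-u^-)\,dx+2\eta\int_{\Omega_F}|e(u-u^-)|^2\,dx.
\end{equation*}
Since the last term is non-negative, $E(u)\geq E(u^-)+4\eta\int_{\Omega_F}e(u^-):e(u-u^-)\,dx$, and after dividing by $2|\Omega|\epsilon:\epsilon$ the lemma reduces to two estimates: (i) $E(u^-)/(2|\Omega|\epsilon:\epsilon)\geq \eta(1+(5/2)\phi)-C\eta\phi^{5/3}$, and (ii) $|\,2\eta\int_{\Omega_F}e(u^-):e(u-u^-)\,dx\,|\leq C\eta|\Omega|\epsilon:\epsilon\,\phi^{5/3}$.

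For (i), I would expand $u^-=\epsilon x+\sum_l u^{1\prime}(\cdot-x^l)$ to obtain
\begin{equation*}
\int_{\Omega_F}|e(u^-)|^2\,dx=|\Omega_F|\epsilon:\epsilon+2\sum_l\int_{\Omega_F}\epsilon:e(u^{1\prime}(x-x^l))\,dx+\sum_{l,k}\int_{\Omega_F}e(u^{1\prime}(x-x^l)):e(u^{1\prime}(x-x^k))\,dx.
\end{equation*}
Using the explicit formulas \eqref{u^1prime}--\eqref{p^1prime} and integration by parts, the middle $\epsilon$-cross terms evaluate to zero in the $\mathbb{R}^3$ limit (the boundary integral on $\partial B^l$ cancels the radiation integral at infinity), while the diagonal $(k=l)$ part of the third term reduces to a boundary integral on $\partial B^l$ equal to $(14\pi/3)a^3\,\epsilon:\epsilon$ per sphere. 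Summing these $N$ contributions and combining with $|\Omega_F|\epsilon:\epsilon=|\Omega|(1-\phi)\epsilon:\epsilon$ reproduces exactly Einstein's factor $|\Omega|\eta(1+5\phi/2)\epsilon:\epsilon$. The off-diagonal pair contributions $(k\neq l)$ and the finite-domain corrections on $\partial\Omega$ are controlled using the pointwise decay $|u^{1\prime}(y)|\leq C\,a^3/|y|^2$ and $|\nabla u^{1\prime}(y)|\leq C\,a^3/|y|^3$, combined with the regularized lattice-sum estimates of Lemmas \ref{sum_converges_lemma} and \ref{sum_converges_lemma_3}, producing the asserted error of size $C\eta|\Omega|\epsilon:\epsilon\,\phi^{5/3}$.

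For (ii), integration by parts (valid because $u^-$ satisfies Stokes as a superposition of single-sphere Stokes flows, and because $u-u^-$ is divergence free) gives
\begin{equation*}
2\eta\int_{\Omega_F}e(u^-):e(u-u^-)\,dx=\int_{\partial\Omega_F}(\sigma(u^-)\,n)\cdot(u-u^-)\,dS.
\end{equation*}
On $\partial\Omega$ the identity $u=\epsilon x$ gives $(u-u^-)|_{\partial\Omega}=-\sum_l u^{1\prime}(x-x^l)$, which is pointwise controlled by the regularized lattice sums. On each $\partial B^l$ one computes $u^-|_{\partial B^l}=\epsilon x^l+\sum_{k\neq l}u^{1\prime}(x-x^k)$, so the rigid-motion component of $(u-u^-)|_{\partial B^l}$ is $(v^l-\epsilon x^l)+\omega^l\times(x-x^l)$; contracted against the self-stress piece $2\eta\epsilon+\sigma(u^{1\prime}(\cdot-x^l))$ of $\sigma(u^-)$ it integrates to zero by the vanishing of force and torque for Einstein's single-sphere flow and by the force/torque balance of $u$ itself. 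The remaining pairwise cross contributions (the stress from $\sigma(u^{1\prime}(\cdot-x^k))$ paired with the rigid motion on $\partial B^l$ for $k\neq l$, together with the cross-disturbance pieces) are estimated via the pointwise decay of $u^{1\prime}$ and its stress and Lemmas \ref{sum_converges_lemma}, \ref{sum_converges_lemma_3}, yielding the required $C\eta|\Omega|\epsilon:\epsilon\,\phi^{5/3}$ bound.

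The main technical obstacle will be the uniform-in-$N$ bookkeeping of the many pairwise cross interactions on each $\partial B^l$: each sphere receives disturbances from all $N-1$ others, and the corresponding double sums over the lattice would diverge in $N$ or $R$ under naive estimates; only the regularized forms of Lemmas \ref{sum_converges_lemma} and \ref{sum_converges_lemma_3} yield bounds independent of the number of inclusions. A secondary difficulty is the careful handling of the boundary corrections on $\partial\Omega$, where $u^-$ fails to satisfy the Dirichlet data exactly, and of the implicit dependence of the rigid-body velocities $v^l,\omega^l$ on the sphere configuration through the force/torque balance.
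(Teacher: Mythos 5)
Your opening identity is exactly the paper's (equation \eqref{lower_bound_lemma_eq_1} rearranged, with $E(u-u^-)\geq 0$ providing the inequality), so the starting point is correct. The problem is the decision to estimate (i) $E(u^-)$ and (ii) the cross term separately, each to accuracy $O(\phi^{5/3}|\Omega|)$. Neither of these holds. Both quantities contain an $O(\phi|\Omega|)$ piece equal to $\pm 4\eta\,\epsilon:\int_{\partial\Omega}u^{-\prime}n\,dS$, coming from the fact that $u^-$ does \emph{not} satisfy the Dirichlet condition on $\partial\Omega$ (there $u^- = \epsilon x + u^{-\prime}$ with $u^{-\prime}=\sum_l u^{1\prime}(\cdot-x^l)\neq 0$). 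In fact, using $\int_{\mathbb R^3\setminus B(0,a)}e^{1\prime}\,dx=0$ and $2\eta\int_{\mathbb R^3\setminus B(0,a)}|e^{1\prime}|^2\,dx = \int_{\partial B(0,a)}(\sigma^{1\prime}\hat n)\cdot\epsilon x\,dS = \tfrac{3\eta}{a}\cdot\tfrac{4\pi a^4}{3}\,\epsilon:\epsilon = 4\pi\eta a^3\,\epsilon:\epsilon$ (the traction of Einstein's disturbance flow on the sphere surface is $\sigma^{1\prime}\hat n=\tfrac{3\eta}{a}\epsilon x$), one finds $\int_{\mathbb R^3\setminus B(0,a)}|e^{1\prime}|^2\,dx = 2\pi a^3\,\epsilon:\epsilon$, not $\tfrac{14\pi}{3}a^3\,\epsilon:\epsilon$ as you claim. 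With the correct value, $E(u^-) = 2\eta|\Omega|\epsilon:\epsilon\bigl(1+\tfrac12\phi\bigr)+O(\phi^{5/3}|\Omega|)$ if the $\partial\Omega$ boundary integral is dropped, i.e.\ the coefficient is $\tfrac12$, not $\tfrac52$. The missing $2\phi$ lives precisely in $4\eta\,\epsilon:\int_{\partial\Omega}u^{-\prime}n$, and the \emph{same} quantity (with opposite sign) is the dominant part of the cross term: after integrating by parts, $\int_{\partial\Omega}(\sigma^- n)\cdot(u-u^-)\,dS = -\int_{\partial\Omega}(\sigma^- n)\cdot u^{-\prime}\,dS$ has leading piece $-2\eta\,\epsilon:\int_{\partial\Omega}u^{-\prime}n$, which is $O(\phi|\Omega|)$ and cannot be absorbed into the $O(\phi^{5/3})$ error by Lemmas \ref{sum_converges_lemma}--\ref{sum_converges_lemma_3} (each sphere contributes $O(a^3)=O(\phi)$ to this boundary integral, and there are $N\propto|\Omega|$ spheres).

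This is precisely why the paper does not estimate the two pieces separately. It computes $E(u^-) - 2\int_{\partial\Omega}\sigma^- n\cdot\epsilon x\,dS = I_1+I_2+I_3$ (using $4\eta\int e^-:e = 2\int_{\partial\Omega}\sigma^- n\cdot\epsilon x$, which only needs the boundary data of $u$ and force/torque balance), and arranges the decomposition so that $I_1$ produces $-2\eta\,\epsilon:\int_{\partial\Omega}u^{-\prime}n$ while $I_2$ produces $+2\eta\,\epsilon:\int_{\partial\Omega}u^{-\prime}n$; these uncontrolled $O(\phi|\Omega|)$ boundary integrals cancel identically (see the passage from \eqref{lower_bound_lemma_I1_final} to \eqref{lower_bound_lemma_I2}). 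What remains is the explicit $-\tfrac{8\pi R^3\eta}{3}\epsilon:\epsilon\bigl(1+\tfrac52\phi\bigr)$ from the self-sphere integrals plus pairwise terms that the lattice-sum lemmas do control. Your proposal identifies the right starting inequality, the right integration-by-parts moves, and the right role for Lemmas \ref{sum_converges_lemma}--\ref{sum_converges_lemma_3}, and you even flag in the final paragraph that ``$u^-$ fails to satisfy the Dirichlet data exactly'' as a difficulty. But this is not a secondary difficulty; it is fatal to the (i)/(ii) split, because it produces leading-order $O(\phi)$ contributions to each piece. To repair the argument, you must keep $E(u^-)$ and $4\eta\int e^-:e$ coupled and observe the cancellation of the $\int_{\partial\Omega}u^{-\prime}n$ terms, as the paper does.
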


\begin{proof}
The proof is a calculation.  First, note that
\begin{equation}
\label{lower_bound_lemma_eq_1}
E(u-u^-) = E(u)+E(u^-)-4 \eta \int_{\Omega_F}  e^-:e \, dx.
\end{equation}
Now, integrating by parts and taking advantage of the boundary conditions on $u$, we have
\begin{align}
2\eta \int_{\Omega_F} & e^-: e \, dx = \int_{\Omega_F} \sigma^-: \nabla u \, dx  = \int_{\partial \Omega} \sigma^- n \cdot  \epsilon x dS  \nonumber \\
%\\ &
&+ \sum_{l=1}^N \int_{\partial B^l} \sigma^- n \cdot \left(v^l+ \omega^l \times (x-x^l)\right) dS
= \int_{\partial \Omega}\sigma^- n \cdot  \epsilon x  dS, \label{lower_bound_lemma_eq_2}
\end{align}
since the particles are force and torque-free (that is, $\int_{\partial B^l} \sigma^- n \, dS = 0$ and $\int_{\partial B^l} \sigma^- n \times \left(x-x^l\right) dS=0$).
Now,
\begin{align}
E(u^-)&-2\int_{\partial \Omega} \sigma^- n  \cdot \epsilon x dS
= \int_{\Omega_F} \sigma^- : e^- \, dx -2\int_{\partial \Omega} \sigma^- n \cdot  \epsilon x dS \nonumber \\
=& -\int_{\partial \Omega} \sigma^- n \cdot  \epsilon x  dS + \int_{\partial \Omega}\sigma^- n  \cdot u^{-\prime}dS + \sum_l \int_{\partial B^l} \sigma^- n \cdot u^{-} dS \nonumber \\
=:& I_1+I_2+I_3. \label{lower_bound_lemma_eq_3}
\end{align}
First, we can write $\sigma^-=2\eta \epsilon + \sigma^{-\prime}$ and integrate $I_1$ by parts to obtain
\begin{align}
I_1 %=& -\int_{\partial \Omega} \sigma^- n\cdot  \epsilon x dS
%=  -\int_{\partial \Omega} \left( 2 \eta \epsilon + \sigma^{-\prime}\right) n \cdot \epsilon x dS \\
=& -\int_{\Omega_F} \sigma^{-\prime} : \epsilon \, dx + \sum_l \int_{\partial B^l} \sigma^{-\prime} n \cdot  \epsilon x dS
- 2 \eta \int_{\partial \Omega} \epsilon n \cdot  \epsilon x dS \nonumber \\
=& -2 \eta \epsilon : \int_{\Omega_F} e^{-\prime} \, dx + \sum_l \int_{\partial B^l} \sigma^{-\prime} n \cdot  \epsilon x dS - \frac{8\pi R^3 \eta}{3} \epsilon : \epsilon\nonumber  \\
=& -2\eta \epsilon :  \int_{\partial \Omega} u^{-\prime} n \, dS - 2 \eta \sum_l \epsilon :\int_{\partial B^l}  u^{-\prime} n \, dS\nonumber
\\ & \qquad \qquad  \qquad
+ \sum_l \int_{\partial B^l} \sigma^{-\prime} n \cdot  \epsilon x dS  - \frac{8\pi R^3 \eta}{3} \epsilon: \epsilon.\nonumber
\end{align}
Some of these integrals above can be evaluated explicitly (see e.g. \cite{batchelor}) to give
\begin{equation}
\label{lower_bound_lemma_I1}
\begin{split}
I_1 =& - \frac{8\pi R^3 \eta}{3} \epsilon : \epsilon \left(1+\frac{5}{2} \phi \right) - 2 \eta \int_{\partial \Omega} \epsilon : u^{-\prime} n \, dS \\
&+ \sum_l \sum_{m \neq l} \epsilon : \int_{\partial B^l} \left[ \sigma^{1\prime} (x-x^m) n x - 2 \eta u^{1\prime} (x-x^m) n \right] dS
\end{split}
\end{equation}
Now, by integrating by parts and recalling that $n$ is the inward-facing normal to $\partial B^l$, we have
\begin{align}
\sum_{m \neq l}& \epsilon :\int_{\partial B^l} \sigma^{1\prime}  (x-x^m) n x dS
= -\sum_{m \neq l} \epsilon :\int_{B^l} \sigma^{1\prime} (x-x^m) \, dx= \nonumber\\
  &-2 \eta\sum_{m \neq l} \epsilon :\int_{B^l} e^{1\prime} (x-x^m) \, dx
= 2\eta\sum_{m \neq l} \epsilon :\int_{\partial B^l} u^{1\prime} (x-x^m) n \, dS. \label{lower_bound_lemma_I1_term}
\end{align}
Combining equations \eqref{lower_bound_lemma_I1} and \eqref{lower_bound_lemma_I1_term} gives
\begin{equation}
\label{lower_bound_lemma_I1_final}
I_1 = - \frac{8\pi R^3 \eta}{3} \epsilon : \epsilon \left(1+\frac{5}{2} \phi \right) - 2 \eta \epsilon :\int_{\partial \Omega} u^{-\prime} n \, dS.
\end{equation}
We can split $I_2$ into two integrals by writing  again $\sigma^-=2\eta \epsilon+ \sigma^{-\prime}$:
\begin{equation}
\label{lower_bound_lemma_I2}
I_2 = \int_{\partial \Omega} \sigma^- : u^{-\prime} n \, dS
= 2\eta \epsilon: \int_{\partial \Omega}  u^{-\prime} n \, dS + \int_{\partial \Omega} \sigma^{-\prime}: u^{-\prime} n \, dS.
\end{equation}
The first term above will cancel the second term in equation \eqref{lower_bound_lemma_I1_final}.
%Since $\int_{\partial \Omega} \sigma^{-\prime} n \, dS=0$, we can regularize the second term.  To this end,
Now, we can estimate
\begin{align}
\left| \int_{\partial \Omega} \right. & \left. \sigma^{-\prime} : u^{-\prime}
%-\sum_m u^{r\prime}_i(\vec{x}^m)
 n \, dS \right|
\label{lower_bound_lemma_I2_estimate_2}\\
= & \left| \int_{\partial \Omega} \sum_l \sigma^{1\prime} (x-x^l):
\sum_m u^{1\prime}(x-x^m) n \, dS \right|, \nonumber \\
\intertext{which, using the formulae for $u^{1\prime}$ and $p^{1\prime}$, given in equations \eqref{u^1prime} and \eqref{p^1prime}, respectively, and the fact that $\int_{\partial \Omega} \sigma^{-\prime} n \, dS=0$, is}
\leq & C \left|\partial \Omega\right| \left\| \sum_l \sigma^{1\prime}(x-x^l) n \right\|_{(L^\infty(\partial \Omega))^3} \nonumber \\
& \cdot a^3 \inf_{C \in \mathbb{R}} \left\| \sum_{z\in \mathbb{Z}^3\setminus \{ 0\} \cap B(0,2N^{1/3})} \frac{1}{|x-z|^2} -  C \right\|_{L^\infty\left( B\left(\frac{\sqrt{3}}{2}\right)\right)} \nonumber \\
\leq & C a^3 \left|\partial \Omega\right| \left\| \sum_l \sigma^{1\prime}(x-x^l) n \right\|_{(L^\infty(\partial \Omega))^3} \nonumber \\
& \cdot \left\| \sum_{z\in \mathbb{Z}^3\setminus \{ 0\} \cap B(0,2N^{1/3})} \left(\frac{1}{|x-z|^2} -  \frac{1}{|z|^2} \right)\right\|_{L^\infty\left( B\left(\frac{\sqrt{3}}{2}\right)\right)}. \nonumber \\
\intertext{Applying Lemma \ref{sum_converges_lemma} and recalling that $|\Omega| \propto N$, this is}
\leq & C N^{2/3} \left[ \sum_{z\in \mathbb{Z}^3\setminus \{ 0\}} \frac{a^3}{|z|^3} \right]
a^3
%\left\| a^3\sum_{\vec{z}\in \mathbb{Z}^3\setminus \{ 0\} \cap B(0,2N^{1/3})} \left( \frac{1}{|\vec{x}-\vec{z}|^2} -  \frac{1}{|\vec{z}|^2} \right)\right\|_{L^\infty(\partial \Omega)}
\leq C a^6 N^{2/3} \log N. \nonumber
\end{align}
We  estimate $I_3$ in a similar fashion. First, we note that $u^1(x-x^l)=u^-(x) - \sum_{m \neq l} u^{1\prime}(x-x^m)$ is constant on $\partial B^l$.
%and we exploit again that $\int_{\partial B^l} \sigma^- n \, dS = 0$.
Hence,
\begin{align}
I_3 =& \sum_l \int_{\partial B^l} \sigma^- : u^{-} n \, dS
=\sum_l \int_{\partial B^l} \sigma^-: \sum_{m \neq l} u^{1\prime}(x-x^m) n \, dS \nonumber
%nd{split}
\end{align}
To regularize the sum above, we set for $x\in \mathbb{Z}^3$,
\begin{equation}
\label{sigma_reg_term_def}
\sigma^{r\prime} (x) := \left\{
\begin{array}{lr}
-p^{1\prime}(x) I + \eta \left( \nabla u^{r\prime}(x) + (\nabla u^{r\prime}(x))^T\right) & x \in \mathbb{Z}^3 \setminus \{0\} \\
0 & x = 0.
\end{array}
\right.
\end{equation}
%Define
%\begin{align}
%I_3 =&
% \sum_l \int_{\partial B^l} \sigma^-_{ij} \sum_{m \neq l} u^{1\prime}_i(\vec{x}-\vec{x}^m) n_j \, dS. \nonumber \\
%= &\sum_l \int_{\partial B^l} \sigma^-_{ij}  \left[ \sum_{m \neq l} u^{1\prime}_i(\vec{x}-\vec{x}^m) - C^{\partial B}_i \right]n_j \, dS, \nonumber
%\end{align}
%\end{equation*}
%where $\vec{C}^{\partial B}$ is to be determined.
We divide the integral further by writing $\sigma^-=2 \eta E + \sigma^{-\prime}$:
\begin{align}
I_3 =& \sum_l \int_{\partial B^l} \sigma^- : \sum_{m \neq l} u^{1\prime}(x-x^m) n \, dS \nonumber \\
=& 2 \eta  \sum_l E : \int_{\partial B^l}  \sum_{m \neq l} u^{1\prime}(x-x^m) n \, dS
+ \sum_l \int_{\partial B^l} \sigma^{-\prime} : \sum_{m \neq l} u^{1\prime}(x-x^m) n \, dS \nonumber \\
=:& I_3^a+I_3^b. \label{lower_bound_lemma_I4}
\end{align}
Given  Lemma \ref{sum_converges_lemma} and the formulae for $u^{1\prime}$ and $p^{1\prime}$, given in \eqref{u^1prime} and \eqref{p^1prime}, respectively, we can estimate $I_3^a$  in a manner similar to equation \eqref{lower_bound_lemma_I2_estimate_2}, taking advantage of the fact that $\int_{\partial B^l} n \, dS = 0$:
\begin{equation}
\label{lower_bound_lemma_I4a}
\begin{split}
\left| I_3^a \right| =& \left| 2 \eta  \sum_l E : \int_{\partial B^l}  \sum_{m \neq l} u^{1\prime}(x-x^m)  n \, dS \right| \\ %\leq C \eta N a^5. \\
\leq & C \eta a^3 \sum_l \left| \partial B^l \right| \inf_{C \in \mathbb{R}}\left\| \sum_{z \in \mathbb{Z}^3 \setminus \{0\} \cap B(0,2N^{1/3})} \frac{1}{|x-z|^2} - C \right\|_{L^\infty(\partial B(0,a))} \\
\leq & C \eta a^3 \sum_l \left| \partial B^l \right| \left\| \sum_{z \in \mathbb{Z}^3 \setminus \{0\} \cap B(0,2N^{1/3})}  \left( \frac{1}{|x-z|^2} - \frac{1}{|z|^2} \right) \right\|_{L^\infty(\partial B(0,a))} \\
%\leq & C \eta N a^2 \left\| a^3 \sum_{\vec{z} \in \mathbb{Z}^3 \setminus \{0\} \cap B(0,2N^{1/3})} \left( \frac{1}{|\vec{x}-\vec{z}|^2}-\frac{1}{|\vec{z}|^2} \right) \right\|_{L^\infty( \partial B(0,a) )} \\
\leq & C \eta N a^5.
\end{split}
\end{equation}
To estimate $I_3^b$, note that, by the Cauchy-Schwarz inequality and the fact that $\int_{\partial B^l} \sigma^{-\prime} n \, dS=0$,
\begin{equation*}
\begin{split}
\left| I_3^b \right|
\leq& \left| \sum_l \int_{\partial B^l} \sigma^{-\prime} : \sum_{m \neq l} u^{1\prime}(x-x^m) n \, dS \right| \\
\leq & C a^3 \left\| \sigma^{-\prime} n \right\|_{\cup_l \partial B^l} N^{1/2} \inf_{C\in \mathbb{R}} \left\| \sum_{z \in \mathbb{Z}^3 \setminus \{0\} \cap B(0,2N^{1/3})} \frac{1}{|x-z|^2} - C \right\|_{\partial B(0,a)} \\
\leq & C a^3 \left\| \sigma^{-\prime} n \right\|_{\cup_l \partial B^l} N^{1/2} \left\| \sum_{z \in \mathbb{Z}^3 \setminus \{0\} \cap B(0,2N^{1/3})} \left( \frac{1}{|x-z|^2} - \frac{1}{|z|^2} \right) \right\|_{\partial B(0,a)} \\
\leq & C N^{1/2} a^4 \left\| \sigma^{-\prime} n \right\|_{\cup_l \partial B^l}.
\end{split}
\end{equation*}
Proceeding as in equation \eqref{lower_bound_lemma_I2_estimate_2},
since $\int_{\partial B^l} \sigma^{1\prime} (x-x^k) n \, dS = 0$,
\begin{align}
\bigg\| \sum_{k} &\sigma^{1\prime} (x-x^k) n \bigg\|_{\cup_l \partial B^l} \leq
\bigg\| \sum_{k}   \left[ \sigma^{1\prime} (x-x^k) - \sigma^{r\prime} (x^k) \right] n \bigg\|_{\cup_l \partial B^l}. \nonumber \\
\intertext{Making use of the formulae for $u^{1\prime}$ and $p^{1\prime}$, given in equations \eqref{u^1prime} and \eqref{p^1prime}, respectively, and Lemma \ref{sum_converges_lemma_3}, we get that this is}
\leq & C N^{1/2} a  \left\| \sum_k \left[\sigma^{1\prime} (x-x^k) - \sigma^{r\prime} (x^k) \right] n \right\|_{(L^\infty(\cup_l \partial B^l))^3} \nonumber \\
\leq & C \eta N^{1/2} a \left\| a^3 \sum_{z \in \mathbb{Z}^3 \setminus \{0 \}} \left( \frac{1}{|x-z|^3} - \frac{1}{|z|^3} \right) \right\|_{L^\infty(\partial B(0,a))} %\nonumber \\
\leq  C \eta N^{1/2} a^4. \label{lower_bound_lemma_I4b_4}
\end{align}
The bounds (\ref{lower_bound_lemma_I4}--\ref{lower_bound_lemma_I4b_4}) imply that
\begin{equation}
\label{lower_bound_lemma_I4_final}
|I_3| \leq C \eta N a^5.
\end{equation}
We recall that $|\Omega| = 4/3\, \pi \, R^3 \propto N$.
Then, \eqref{ev_energy_rep}  and   previous estimates combined finally yield:
%equations \eqref{lower_bound_lemma_eq_1}, \eqref{lower_bound_lemma_eq_2}, \eqref{lower_bound_lemma_eq_3}, \eqref{lower_bound_lemma_I1_final}, \eqref{lower_bound_lemma_I2}, \eqref{lower_bound_lemma_I2_estimate}, \eqref{lower_bound_lemma_I2_estimate_2}, and \eqref{lower_bound_lemma_I4_final}, we get that
\begin{equation*}
0\leq \frac {E(u-u^-)}{2|\Omega| \epsilon : \epsilon} \leq \hat{\eta} - \eta  \left(1+\frac{5}{2}\phi\right) + C
\eta  \phi^{5/3},
\end{equation*}
for some constant $C>0$ dependent on $u$, but  independent of $N$ and $\phi$.
\end{proof}

\subsection{Upper bound for the effective viscosity}

We will next show that $u^+$,  solution of the boundary value problem
\begin{equation}
\label{u^+}
\left\{
\begin{array}{lr}
\eta \Delta u^+ = \nabla p^+ \text{, } \nabla \cdot u^+ = 0 & x \in \Omega_F \\
% & x \in \Omega_F \\
u^+ = \epsilon x^l & x \in \partial B^l \\
u^+ = \epsilon x & x \in \partial \Omega,
\end{array}
\right.
\end{equation}
provides an upper bound for the effective viscosity.
%This is shown in

\begin{lemma}
\label{upper_bound_lemma}
Let $\hat{\eta}$ be defined in
%For $\hat{\eta}$ defined in
equation \eqref{definition_of_ev} and let
\begin{equation}
\label{def_ev_u+}
\hat{\eta}(u^+):= (2|\Omega| \epsilon:\epsilon)^{-1} E(u^+).
\end{equation}
Then, \
$
\hat{\eta} \leq \hat{\eta}(u^+).
$
\end{lemma}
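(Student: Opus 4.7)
The plan is to use the standard variational (minimum energy) principle for the Stokes system: among all divergence-free vector fields that attain the prescribed Dirichlet data on $\partial\Omega$ and a \emph{rigid} motion on each $\partial B^l$, the actual solution $u$ of \eqref{suspension_pde} minimizes the rate of energy dissipation $E(\cdot)$. Since $u^+$ has $u^+|_{\partial B^l}=\epsilon\, x^l$, which is a rigid motion (translation by $\epsilon\,x^l$ with zero angular velocity), $u^+$ lies in the admissible class for $u$ and the inequality $E(u)\le E(u^+)$ follows; dividing by $2|\Omega|\,\epsilon:\epsilon$ then gives $\hat\eta\le\hat\eta(u^+)$.

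Concretely, I would set $w:=u^+-u$ and expand
\begin{equation*}
E(u^+)=E(u+w)=E(u)+4\eta\int_{\Omega_F} e(u):e(w)\,dx + E(w),
\end{equation*}
so it suffices to show the cross term vanishes. Since $\nabla\cdot w=0$ and $\sigma(u)=-pI+2\eta\,e(u)$ is symmetric, $\sigma(u):e(w)=\sigma(u):\nabla w=2\eta\,e(u):e(w)$. Integration by parts, together with $\nabla\cdot\sigma(u)=-\nabla p+\eta\Delta u=0$, then yields
\begin{equation*}
2\eta\int_{\Omega_F} e(u):e(w)\,dx =\int_{\partial\Omega_F}(\sigma(u)\,n)\cdot w\,dS.
\end{equation*}

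The rest is a careful inspection of the boundary data. On $\partial\Omega$ both $u$ and $u^+$ equal $\epsilon x$, so $w=0$ there and that portion of the boundary integral drops. On each $\partial B^l$, the difference
\begin{equation*}
w=\epsilon x^l - v^l - \omega^l\times(x-x^l)
\end{equation*}
is itself a rigid motion (a constant vector minus a term linear in $x-x^l$). Splitting the integral accordingly,
\begin{equation*}
\int_{\partial B^l}(\sigma(u)n)\cdot w\,dS
=(\epsilon x^l-v^l)\cdot\!\int_{\partial B^l}\sigma(u)n\,dS
-\omega^l\cdot\!\int_{\partial B^l}(x-x^l)\times\sigma(u)n\,dS,
\end{equation*}
using $a\cdot(b\times c)=b\cdot(c\times a)$. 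Both integrals on the right vanish by the force- and torque-balance conditions imposed in \eqref{suspension_pde}. Hence the cross term is zero, $E(u^+)=E(u)+E(w)\ge E(u)$, and the claim follows.

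I do not anticipate any real obstacle: the argument is the usual orthogonality-of-the-Stokes-projection statement, and the only structural point to check is that the boundary data of $w$ on each $\partial B^l$ is a rigid motion so that the force/torque conditions apply term-by-term. A minor technical check is that $u$ (and thus $w$) has enough regularity up to $\partial\Omega_F$ to justify the integration by parts; this is guaranteed by the smoothness of the boundary and of the Dirichlet data (Theorems IV.1.1 and IV.5.2 in \cite{galdi}, already invoked in Section~\ref{sec_setup}), so no density argument is actually needed.
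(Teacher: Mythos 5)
Your proof is correct and takes essentially the same approach as the paper: both expand the energy of the difference, integrate the cross term by parts using $\nabla\cdot\sigma(u)=0$, and kill the resulting boundary terms via the force- and torque-balance conditions on each $\partial B^l$ (and $w=0$ on $\partial\Omega$). The only cosmetic difference is that you show the bilinear pairing of $u$ with $w=u^+-u$ vanishes, while the paper shows the pairing of $u$ with $u^+$ equals $E(u)$; these are algebraically equivalent statements.
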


\begin{proof}
We begin by writing
\begin{equation*}
E(u^+-u) = E(u^+) + E(u) - 2  \eta \int_{\Omega_F} e^+ : e \, dx,
\end{equation*}
Next we observe that, since the particles are force and torque--free,
\[
\begin{aligned}
E(u) &= 2 \eta\int_{\Omega_F} e : e \, dx = \int_{\Omega_F} \sigma : e \, dx =  \int_{\Omega_F} \sigma: \nabla u \, dx \nonumber \\
&= \int_{\partial \Omega} \sigma n \cdot \epsilon x \, dS
+ \sum_l \int_{\partial B^l} \sigma n \cdot \left(v^l + \omega^l \times (x - x^l) \right) dS  \\
%\label{E_u}
%\end{align}
%in which the second term vanishes since the particles are force and torque--free.
%Now,
%\begin{equation*}
%E(u^+-u) = E(u^+) + E(u) - 2 \int_{\Omega_F} e^+ : e \, dx
%\end{equation*}
%and, recalling equation \eqref{E_u},
%\begin{equation*}
%\begin{split}
&= \int_{\partial \Omega} \sigma n \cdot  \epsilon x \, dS
  =  \int_{\Omega_F} e^+ : \sigma \, dx = 2 \eta \int_{\Omega_F} e^+: e \, dx.
\end{aligned}
\end{equation*}
Therefore, $E(u^+-u)=E(u^+)-E(u) \geq 0$, that is,  $E(u) \leq E(u^+)$, which by definition implies $\hat{\eta} \leq \hat{\eta}(u^+).$
%Recalling equations \eqref{ev_energy_rep}
%\begin{align}
%\hat{\eta}& =\frac{1}{2|\Omega| \epsilon:\epsilon} E(u) \nonumber \\
%\intertext{and}
%\hat{\eta}(u^+) &= \frac{1}{2|\Omega| \epsilon:\epsilon} E(u^+), \nonumber
%\end{align}
%and \eqref{def_ev_u+}, we get the desired result.
\end{proof}

In light of Lemmas \ref{lower_bound_lemma} and \ref{upper_bound_lemma}, it remains to prove that there exists a $C>0$ independent of $\phi,N$ such that for all $\phi < \frac{\pi}{6}$,
\begin{equation}
\label{u^+_asymptotics}
\left| \hat{\eta}(u^+) - \eta \left(1+\frac{5}{2} \phi \right) \right| \leq C \eta \phi^{3/2}.
\end{equation}
To do so, we introduce a dilute approximation for $u^+$:

\begin{equation}
\label{new_u^d}
\begin{split}
u^d&:=E x + \sum_l \left[u^1(x-x^l)
%-\vec{u}^{r\prime}(\vec{x}^l)
-E x\right] - C^d,
\end{split}
\end{equation}
where $u^1$ solves equation \eqref{dilute_problem}.  The constant $C^d$, which will be determined later, is added to regularize the sum--that is, to ensure $u^d$ remains finite as $N\rightarrow \infty$.
%Since, in what follows, we will use $L^\infty$ estimates of $u^+-u^d$, we will also frequently add these terms to $u^+$, as well, to produce cancelation.  As one can observe from the definition of the effective viscosity in equation \eqref{def_ev_u+}, $\hat{\eta}(u^+)=\hat{\eta}(u^+ + \sum_l u^{r\prime}(x^l))$.
%The difference between equations \eqref{new_u^d} and \eqref{u^d} is in the terms $-u^1(x^l)$, which are constants (and hence do not affect the effective viscosity) that are added so that $u^d$ converges when $N\rightarrow \infty$.

%In order to prove equation \eqref{u^+_asymptotics}, we will start by showing
We begin with a preliminary lemma.

\begin{lemma}
\label{lemma_ev_representation_arb_n}
%Using definition \eqref{def_ev_u+}, t
There exists $C>0$ independent of $a$ and $N$ such that
\begin{equation}
\label{lemma_formula_for_ev_arb_n}
\left| \hat{\eta}(u^+) - \eta\left(1 +  \frac{5}{2} \phi \right)\right| \leq \left|
\frac{1}{2\epsilon : \epsilon |\Omega |} \sum_l \epsilon: \int_{\partial B^l} \sigma^E n \left( x - x^l\right) dS \right| + C  \eta\phi^{5/3},
\end{equation}
where $\sigma^E$ is the stress corresponding to the flow $u^E:=u^+-u^d$, where $u^+$ is the solution to equation \eqref{u^+} and $u^d$ is defined in equation \eqref{new_u^d}
\end{lemma}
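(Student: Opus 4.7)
The plan is to produce an exact stresslet representation of $\hat{\eta}(u^+)-\eta$, then split $\sigma^+=\sigma^d+\sigma^E$ so that the Einstein correction comes entirely from the single--sphere stresslet inside $\sigma^d$, while the pairwise cross--contributions from $\sigma^d$ are absorbed into the $O(\eta\phi^{5/3})$ error term and the $\sigma^E$ boundary integral is left as stated.

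First I would integrate by parts in $E(u^+)=\int_{\Omega_F}\sigma^+:\nabla u^+\,dx$, using that $u^+$ is divergence-free and $\nabla\cdot\sigma^+=0$ from the Stokes system, to obtain $E(u^+)=\int_{\partial\Omega_F}\sigma^+ n\cdot u^+\,dS$. Inserting the boundary data of \eqref{u^+} and the algebraic identity $\epsilon x^l=\epsilon x-\epsilon(x-x^l)$ on each $\partial B^l$ splits this into
\begin{equation*}
E(u^+)=\int_{\partial\Omega_F}\sigma^+ n\cdot\epsilon x\,dS\;-\;\sum_l\epsilon:\int_{\partial B^l}\sigma^+ n\,(x-x^l)\,dS.
\end{equation*}
The first term equals $\int_{\Omega_F}\sigma^+:\epsilon\,dx=2\eta\,\epsilon:\int_{\Omega_F}e^+\,dx=2\eta|\Omega|\,\epsilon:\epsilon$ by the divergence theorem: $\mathrm{tr}\,\epsilon=0$ kills the pressure, and since $u^+|_{\partial B^l}=\epsilon x^l$ is constant one has $\int_{\partial B^l} u^+_i n_j\,dS=0$, so only the $\partial\Omega$ contribution survives in the boundary evaluation of $\int e^+\,dx$. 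Dividing by $2|\Omega|\epsilon:\epsilon$ yields the exact identity
\begin{equation*}
\hat{\eta}(u^+)-\eta\;=\;-\frac{1}{2|\Omega|\,\epsilon:\epsilon}\sum_l\epsilon:\int_{\partial B^l}\sigma^+ n\,(x-x^l)\,dS.
\end{equation*}

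Next I would decompose $\sigma^+=\sigma^d+\sigma^E$ and, on each $\partial B^l$, split $\sigma^d=\sigma^1(x-x^l)+\sum_{m\neq l}\sigma^{1\prime}(x-x^m)$; the additive constant $C^d$ in \eqref{new_u^d} plays no role since it does not affect the stress. The single--sphere piece reproduces Einstein's correction via the classical identity $\epsilon:\int_{\partial B(0,a)}\sigma^1(y)\,\nu\otimes y\,dS_y=\tfrac{20\pi\eta a^3}{3}\,\epsilon:\epsilon$ (with $\nu$ outward to $B(0,a)$): after translating and reversing sign because our $n$ points into $B^l$, and using $|\Omega|\phi=\tfrac{4\pi a^3}{3}N$, summation gives $\sum_l\epsilon:\int_{\partial B^l}\sigma^1(x-x^l)\,n\,(x-x^l)\,dS=-5\eta|\Omega|\phi\,\epsilon:\epsilon$, whose contribution to $\hat{\eta}(u^+)-\eta$ is exactly $\tfrac{5}{2}\eta\phi$.

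The main obstacle is bounding, uniformly in $N$, the remaining pairwise interaction
\begin{equation*}
J:=\sum_l\sum_{m\neq l}\epsilon:\int_{\partial B^l}\sigma^{1\prime}(x-x^m)\,n\,(x-x^l)\,dS
\end{equation*}
by $C\eta|\Omega|\phi^{5/3}$. The strategy mirrors the analysis of $I_3^a$ and $I_3^b$ in the proof of Lemma \ref{lower_bound_lemma}: I would subtract from each summand the constant value $\sigma^{r\prime}(x^l-x^m)$ defined in \eqref{sigma_reg_term_def}, which does not change the integral because $\int_{\partial B^l} n\,(x-x^l)\,dS=0$ on any closed sphere, and then invoke Lemmas \ref{sum_converges_lemma} and \ref{sum_converges_lemma_3} together with the explicit $|y|^{-3}$ decay of $\sigma^{1\prime}(y)$ read off from \eqref{u^1prime}--\eqref{p^1prime} to control the resulting lattice sum in $L^\infty(\partial B^l)$. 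This produces $|J|\leq C\eta Na^5=C\eta|\Omega|a^2\phi\leq C\eta|\Omega|\phi^{5/3}$. Applying the triangle inequality to the resulting identity
\begin{equation*}
\hat{\eta}(u^+)-\eta\Bigl(1+\tfrac{5}{2}\phi\Bigr)=-\frac{1}{2|\Omega|\epsilon:\epsilon}\sum_l\epsilon:\int_{\partial B^l}\sigma^E n\,(x-x^l)\,dS\;-\;\frac{J}{2|\Omega|\epsilon:\epsilon},
\end{equation*}
then yields \eqref{lemma_formula_for_ev_arb_n}.
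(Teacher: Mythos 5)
Your derivation of the exact stresslet representation and the decomposition $\sigma^+=2\eta\epsilon+\sigma^{1\prime}(x-x^l)+\sum_{m\neq l}\sigma^{1\prime}(x-x^m)+\sigma^E$ matches the paper, and extracting the $\tfrac52\eta\phi$ term from the single--sphere stresslet is the same step. The gap is in your bound on $J$. You assert that subtracting the constant matrix $\sigma^{r\prime}(x^l-x^m)$ ``does not change the integral because $\int_{\partial B^l} n\,(x-x^l)\,dS=0$ on any closed sphere.'' This identity is false: on a sphere of radius $a$ the rank--two integral is $\int_{\partial B^l} n_j\,(x-x^l)_k\,dS=\pm\tfrac{4\pi a^3}{3}\delta_{jk}$, so for a constant symmetric matrix $S$ one gets $\epsilon:\int_{\partial B^l}S\,n\,(x-x^l)\,dS=\pm\tfrac{4\pi a^3}{3}\,\epsilon:S$, which is not zero unless $S$ happens to be a multiple of the identity. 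Consequently your subtraction introduces the extra term
\begin{equation*}
\pm\frac{4\pi a^3}{3}\sum_{l}\sum_{m\neq l}\epsilon:\sigma^{r\prime}(x^l-x^m),
\end{equation*}
and controlling it requires showing that the lattice sum $\sum_{m\neq l}\sigma^{r\prime}(x^l-x^m)$ is bounded uniformly in $N$. That is not a consequence of Lemmas~\ref{sum_converges_lemma}--\ref{sum_converges_lemma_3}, which regularize by subtracting the constant value $\sigma^{r\prime}(x^l-x^m)$ itself; the bare sum of a $|z|^{-3}$ kernel over the lattice is only conditionally convergent, and one would have to establish the angular cancellation of the leading part of $\sigma^{1\prime}$ by a separate explicit computation.

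The paper's proof sidesteps exactly this issue. Instead of subtracting a constant from $\sigma^{1\prime}$, it uses $\nabla\cdot\sigma^{1\prime}=0$ to integrate by parts \emph{inside} each $B^l$ and convert
\begin{equation*}
\epsilon:\int_{\partial B^l}\sum_{m\neq l}\sigma^{1\prime}(x-x^m)\,n\,(x-x^l)\,dS
\quad\text{into}\quad
-2\eta\,\epsilon:\int_{\partial B^l}\sum_{m\neq l}u^{1\prime}(x-x^m)\,n\,dS.
\end{equation*}
In the latter form the relevant vanishing identity is the genuine one, $\int_{\partial B^l} n\,dS=0$, so a constant \emph{vector} may be subtracted from $\sum_{m\neq l}u^{1\prime}(x-x^m)$ for free, and Lemma~\ref{sum_converges_lemma} applied to the $|z|^{-2}$ kernel then gives the $Ca^5$ bound. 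You should insert this integration--by--parts step before invoking the constant--subtraction trick; as written, your estimate of $J$ does not close.
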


\begin{proof}
%The proof is identical to the proof of Lemma \ref{lemma_ev_representation}, since adding a constant to $u$ does not change the value of the effective viscosity as defined by equation \eqref{definition_of_ev}.
We integrate the right-hand side of equation \eqref{def_ev_u+} by parts, utilizing  \eqref{u^+},
\begin{equation}
\label{lemma_formula_for_ev_1}
%\begin{split}
\hat{\eta}(u^+) %=& \frac{1}{2 |\Omega| \epsilon : \epsilon} 2 \eta \int_{\Omega_F} e^+ : e^+ \, dx
= \frac{1}{2 |\Omega| \epsilon : \epsilon} \int_{\Omega_F} \sigma^+ : e^+ \, dx %, \\
%\intertext{which, after integrating by parts and recalling equation \eqref{u^+}, is}
= \frac{\epsilon:}{2 |\Omega| \epsilon : \epsilon} \left[ \int_{\partial \Omega} \sigma^+ n x dS + \sum_l \int_{\partial B^l} \sigma n x^l dS \right].
%\end{split}
\end{equation}
where by further integration by parts we can write (recall $\sigma^+ = -p^+ I + 2\eta e^+$):
\begin{align}
\epsilon : \int_{\partial \Omega} \sigma^+ n x dS =&
%\epsilon : \int_{\Omega_F} \sigma^+ dx - \sum_l \epsilon:  \int_{\partial B^l} \sigma^+ n x dS \nonumber \\
%=&
2 \eta \epsilon: \int_{\Omega_F} e^+ \, dx- \sum_l \epsilon : \int_{\partial B^l} \sigma^+ n x dS \nonumber \\
%\intertext{Integrating by parts again and recalling equation\eqref{u^+}, this is}
=& 2 \eta \epsilon : \int_{\partial \Omega} \epsilon x n \, dS %\nonumber \\ &
+ \epsilon : \sum_l \int_{\partial B^l} \left(2 \eta \epsilon n x^l - \sigma^+ n x \right)dS \nonumber \\
=& 2 \eta |\Omega| \epsilon : \epsilon - \epsilon : \sum_l \int_{\partial B^l} \sigma^+ n x dS. \label{lemma_formula_for_ev_2}
\end{align}
%Combining equations \eqref{lemma_formula_for_ev_1} and \eqref{lemma_formula_for_ev_2} and reversing the orientation of $n$ so that it is the outward-facing normal to $\partial B^l$, we get
We then have, reversing the orientation so that $n$  is the outward-facing normal to $\partial B^l$,
\begin{equation*}
\hat{\eta}(u^+) =\eta + \frac{1}{2 |\Omega| \epsilon: \epsilon} \sum_l  \epsilon: \int_{\partial B^l} \sigma^+ n (x - x^l) dS.
\end{equation*}
Substituting $\sigma^{+}=2 \eta \epsilon + \sigma^{d\prime} + \sigma^E$ in the above equation, and integrating  explicitly some terms (see, for example, \cite{batchelor}) gives
\begin{align}
\hat{\eta}(u^+) = &\eta\left(1 +  \frac{5}{2}\phi\right)+
\frac{1}{2|\Omega|\epsilon: \epsilon} \sum_l \epsilon : \int_{\partial B^l} \sigma^E n (x-x^l) dS \nonumber \\
& +\frac{1}{2|\Omega|\epsilon: \epsilon} \sum_l \epsilon : \int_{\partial B^l} \sum_{m \neq l}\sigma^{1\prime}(x-x^l) n \left(x-x^l\right) dS. \nonumber
\end{align}
We denote the last integral above by $I$, and we observe that, since $u^1$ solves  \eqref{dilute_problem}, integrating by parts,
%Integrating by parts, we get that the last integral is
\begin{equation*}
\begin{split}
I
%:=&  \epsilon: \int_{\partial B^l} \sum_{m \neq l} \sigma^{1\prime}(x-x^m) n \left(x-x^l\right) dS \\
=& -  \epsilon :\int_{B^l} \sum_{m \neq l} \left( \nabla \cdot \sigma^{1\prime} x + \sigma^{1\prime}(x-x^m) \right) \, dx \\
%\end{split}
%\end{equation*}
%Due to equation \eqref{dilute_problem}, this becomes
%\begin{equation*}
%\begin{split}
%I
=& -2 \eta   \epsilon :\int_{B^l} \sum_{m \neq l} e^{1\prime}(x-x^m) \, dx
= -2 \eta   \epsilon: \int_{\partial B^l} \sum_{m \neq l} u^{1\prime}(x-x^m) n \, dS% \\
%\end{split}
%\end{equation*}
%Since $\int_{\partial B^l} n \, dS = 0$, we can add a term to regularize the integral:
%\begin{equation*}
%I
%&= -2 \eta   \epsilon : \int_{\partial B^l} \left[\sum_{m \neq l} \left(u^{1\prime}(x-x^m) - %u^{r\prime}(x^m)\right) \right] n \, dS,
% & \left. + \int_{\partial B^1} \sum_{m \neq 1} \left(u^{1\prime}_i(x-x^m) - u^{1\prime}_i(x^m)\right)  n_k dS \right],
\end{split}
\end{equation*}
%where in the last equality we utilized once more that $\int_{\partial B^l} n \, dS = 0$ to regularize the sum as $N\to \infty$.

Note that, since $\int_{\partial B^l} n \, dS = 0$, by Lemma \ref{sum_converges_lemma} and formula  \eqref{u^1prime} for $u^{1\prime}$, we have
\begin{equation*}
\begin{split}
\bigg|\int_{\partial B^l}  & \sum_{m \neq l}u^{1\prime}(x-x^m)  n \, dS \bigg| \\
& \leq Ca^3\left| \partial B^l \right| \inf_{C\in\mathbb{R}} \left\| \sum_{z \in \mathbb{Z}^3 \setminus \{0\} \cap B(0, 2 N^{1/3})} \frac{1}{|x-z|^2} - C \right\|_{(L^\infty(\partial B(0,a)))^3} \\
& \leq Ca^3\left| \partial B^l \right| \left\| \sum_{z \in \mathbb{Z}^3 \setminus \{0\} \cap B(0, 2 N^{1/3})} \left( \frac{1}{|x-z|^2} - \frac{1}{|z|^2} \right) \right\|_{(L^\infty(\partial B(0,a)))^3} \\
& \leq C a^5.
\end{split}
\end{equation*}
Therefore, recalling that $|\Omega | \propto N$,
%\begin{equation}
%\label{u^1_formula}
%u^{1\prime}_i =- \epsilon_{ij} x_j \frac{a^5}{r^5}- \epsilon_{jk} x_i x_j x_k \frac{5}{2} \left(\frac{a^3}{r^5}-\frac{a^5}{r^7}\right),
%\end{equation}
\begin{equation*}
\left| \frac{1}{2|\Omega|\epsilon: \epsilon} \sum_l  \epsilon : \int_{\partial B^l} \sum_{m \neq l} \sigma^{1\prime}(x-x^m) n \left(x-x^l\right) dS \right| \leq C \eta \phi^{5/3}.
\end{equation*}
\end{proof}

The next Lemma is a straightforward consequence of Lemma
\ref{lemma_ev_representation_arb_n} and Corollary \ref{final_t_estimate}.

\begin{lemma}
\label{error_estimate_lemma_2_arb_n}
%Using definition \eqref{def_ev_u+} for $\hat{\eta}(u^+)$, t
There exists a $C>0$ independent of $\phi$ and $N$ such that for all $\phi < \frac{\pi}{6}$,
\begin{equation}
\label{error_estimate_u^E_arb_n}
%\left| \hat{\eta} - \eta \left(1+\frac{5}{2} \phi \right) \right| % \leq C \frac{\phi^{1/3}}{N} \left\| u^E \right\|_{H^1(\partial \Omega_F)},
%\leq C \eta \frac{\phi^{1/2}}{N^{1/2}} \left\| u^E \right\|_{(L^2(\partial \Omega_F))^3}
%+ C \eta \frac{\phi^{2/3}}{N} \sum_{i=1}^2 \left\| \frac{\partial u^E}{\partial \tau^i} \right\|_{(L^2(\partial \Omega_F))^3},
\begin{split}
\left| \hat{\eta}(u^+) - \eta \left(1+\frac{5}{2} \phi \right) \right| % \leq C \frac{\phi^{1/3}}{N} \left\| u^E \right\|_{H^1(\partial \Omega_F)},
\leq & C \eta \frac{\phi^{1/3}}{N} \left\| u^E \right\|_{\cup_l \partial B^l}
+ C \eta \frac{\phi^{1/2}}{N^{1/2}} \left\| u^E \right\|_{\partial \Omega_F} \\
& + C \eta \frac{\phi^{2/3}}{N} \sum_{i=1}^2 \left\| \frac{\partial u^E}{\partial \tau^i} \right\|_{\partial \Omega_F} + C \eta \phi^{5/3}.
\end{split}
\end{equation}
\end{lemma}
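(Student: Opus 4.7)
The plan is to chain together Lemma \ref{lemma_ev_representation_arb_n} with Corollary \ref{final_t_estimate}, treating the traction $\sigma^E n$ as the intermediary whose norm we first extract from the effective-viscosity error and then re-express in terms of norms of $u^E$.

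First, I would start from the bound already established in Lemma \ref{lemma_ev_representation_arb_n}, namely
\[
\left|\hat{\eta}(u^+)-\eta\left(1+\tfrac{5}{2}\phi\right)\right|
\le
\frac{1}{2\,|\Omega|\,\epsilon\!:\!\epsilon}\left|\sum_l \epsilon\!:\!\int_{\partial B^l}\sigma^E n\,(x-x^l)\,dS\right|
+ C\eta\,\phi^{5/3},
\]
and bound the sum by Cauchy--Schwarz, exploiting that $|x-x^l|=a$ and $|\partial B^l|=4\pi a^2$. Writing $\cup_l\partial B^l$ as a single manifold of total area $4\pi N a^2$, a single Cauchy--Schwarz gives
\[
\left|\sum_l\int_{\partial B^l}\epsilon\!:\!\sigma^E n\,(x-x^l)\,dS\right|
\le C\,a\int_{\cup_l\partial B^l}|\sigma^E n|\,dS
\le C\,a^2\sqrt{N}\;\|\sigma^E n\|_{\cup_l\partial B^l}.
\]
Since the setup $d=1$ forces $|\Omega|\propto N$, dividing by $|\Omega|$ turns this into a factor of the form $a^2 N^{-s}$ multiplying $\|\sigma^E n\|_{\cup_l\partial B^l}$, which is the compact statement quoted in the outline of Section \ref{sec_procedure}.

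Next, since $\cup_l\partial B^l\subset\partial\Omega_F$, I would use the trivial monotonicity $\|\sigma^E n\|_{\cup_l\partial B^l}\le\|\sigma^E n\|_{\partial\Omega_F}$ to put the full-boundary traction in place, and then apply Corollary \ref{final_t_estimate} directly to $u^E$, which is legitimate because $u^E=u^+-u^d$ itself solves a Stokes system in $\Omega_F$ with $H^1$ boundary data. This gives
\[
\|\sigma^E n\|_{\partial\Omega_F}
\le C\eta\left[\frac{N^{1/2}}{a^{1/2}}\|u^E\|_{\partial\Omega_F}
+\frac{1}{a}\|u^E\|_{\cup_l\partial B^l}
+\sum_{i=1}^2\left\|\frac{\partial u^E}{\partial\tau^i}\right\|_{\partial\Omega_F}\right].
\]
Multiplying through by the prefactor $C a^2/N^{s}$ from Step 1 and using the conversion $\phi=\tfrac{4\pi}{3}a^3$, so that $a\asymp\phi^{1/3}$, $a^{3/2}\asymp\phi^{1/2}$, $a^2\asymp\phi^{2/3}$, yields the three boundary terms with exactly the stated powers of $\phi$ and $N^{-1/2}$ or $N^{-1}$ in the statement of the lemma, while the residual $C\eta\phi^{5/3}$ from Lemma \ref{lemma_ev_representation_arb_n} is carried along unchanged.

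The only nontrivial bookkeeping, and the place where I would be most careful, is the tracking of the precise $N$-dependence through both Cauchy--Schwarz steps: one has to balance the $N^{1/2}$ coming out of the sum on $\cup_l\partial B^l$ against the $N^{1/2}/a^{1/2}$ appearing in Corollary \ref{final_t_estimate}, and against the factor $|\Omega|^{-1}\propto N^{-1}$, so that each of the three $u^E$-terms picks up the correct negative power of $N$ advertised in \eqref{error_estimate_u^E_arb_n}. No new analytic input is required; the lemma is essentially a bookkeeping corollary of the two earlier results, and the dominant contribution (the one that ultimately produces the $\phi^{3/2}$ rate in Theorem \ref{thm_ev_arb_n}) is the term containing $\|u^E\|_{\partial\Omega_F}$ with coefficient $\phi^{1/2}/N^{1/2}$, whose eventual control comes from estimating $u^E$ directly from the PDE \eqref{error_pde_arb_n} it satisfies.
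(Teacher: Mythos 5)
Your proposal follows the paper's route exactly: start from Lemma \ref{lemma_ev_representation_arb_n}, use Cauchy--Schwarz and the relation $|\Omega|\propto N$ to reduce to a multiple of $\|\sigma^E n\|_{\partial\Omega_F}$, apply Corollary \ref{final_t_estimate}, and then substitute $a=(3\phi/4\pi)^{1/3}$. That is precisely what the paper does.

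However, there is a genuine gap in the step you yourself identify as the ``only nontrivial bookkeeping.'' Your explicit Cauchy--Schwarz gives
\[
\Big|\sum_l\int_{\partial B^l}\epsilon:\sigma^E n\,(x-x^l)\,dS\Big|
\le C\,a^2\sqrt{N}\,\|\sigma^E n\|_{\cup_l\partial B^l},
\]
and dividing by $2\,\epsilon:\epsilon\,|\Omega|\propto N$ produces the prefactor $C\,a^2/\sqrt{N}$, i.e.\ $s=1/2$ in your ``$a^2/N^s$.'' But the coefficients in \eqref{error_estimate_u^E_arb_n} are what one gets by multiplying Corollary \ref{final_t_estimate} by $a^2/N$, i.e.\ $s=1$: indeed $\frac{a^2}{N}\cdot\frac{N^{1/2}}{a^{1/2}}=\frac{a^{3/2}}{N^{1/2}}\sim\frac{\phi^{1/2}}{N^{1/2}}$, $\frac{a^2}{N}\cdot\frac{1}{a}=\frac{a}{N}\sim\frac{\phi^{1/3}}{N}$, and $\frac{a^2}{N}\cdot 1\sim\frac{\phi^{2/3}}{N}$. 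With your $s=1/2$, each of the three coefficients is a factor of $\sqrt{N}$ larger than stated; fed into Lemma \ref{lemma_ev_upper_bound} (where $\|u^E\|_{\partial\Omega_F}\lesssim N^{1/2}\phi$), the dominant term would become $\eta N^{1/2}\phi^{3/2}$, which is not uniform in $N$ and would break Theorem \ref{thm_ev_arb_n}. So you cannot assert that ``the three boundary terms with exactly the stated powers'' come out without verifying; on the face of it your Cauchy--Schwarz does \emph{not} deliver the $a^2/N$ prefactor that the paper writes in its intermediate step $\|{\cdot}\|\le C\big(\tfrac{a^2}{N}\|\sigma^E n\|_{\partial\Omega_F}+\eta\phi^{5/3}\big)$, and the paper's own one-line justification (``Cauchy--Schwarz'') does not make the source of the extra $N^{-1/2}$ clear. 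This is exactly the spot where a careful reader would stop, and your proposal needs to either exhibit the missing cancellation or acknowledge that the stated $N$-dependence is not reproduced by the argument as written.
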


\begin{proof}
%The proof is identical to the proof of Lemma \ref{error_estimate_lemma_2}.
From Lemma \ref{lemma_ev_representation_arb_n}, we can write
\begin{equation*}
\begin{aligned}
\left| \hat{\eta}(u^+)-\eta\left(1+  \frac{5}{2} \phi \right) \right| &\leq
\left| \frac{1}{2\epsilon : \epsilon |\Omega |} \sum_l \epsilon: \int_{\partial B^l} \sigma^E n (x - x^l) dS \right|+ C \eta \phi^{5/3} \\
&\leq  C \left(\frac{a}{N} \sum_l \int_{\partial B^l} | \sigma^E  n | dS +  \eta \phi^{5/3} \right),
\end{aligned}
\end{equation*}
where the second inequality follows from the  Cauchy-Schwarz inequality,
 noting that  $|x - x^l|=a$ for $x\in\partial B^l$ and  $|\Omega | \propto N$. \ Then  Corollary \ref{final_t_estimate} implies that
\begin{align}
\left| \hat{\eta}(u^+)  \right . & \left .-\eta\left(1+  \frac{5}{2} \phi \right) \right|
%\nonumber \\
%\leq & C \frac{a^2}{N} \left\| \sigma^E n \right\|_{L^2(\cup_l \partial B^l)} + C\eta \phi^{5/3} \\
 \leq C \left(\frac{a^2}{N} \left\| \sigma^E n \right\|_{\partial \Omega_F} +  \eta \phi^{5/3}  \right) \nonumber \\
%=C \frac{\phi^{2/3}}{N} \left\| \sigma^E_{ij}  n_j \right\|_{L^2(\partial \Omega_F)}. \\
%\intertext{Applying Corollary \ref{final_t_estimate}, this is}
\leq & C \left( \eta \frac{a}{N}\left\| u^E \right\|_{\cup_l \partial B^l}
\eta \frac{a^{3/2}}{N^{1/2}}\left\| u^E \right\|_{\partial \Omega_F}
% \nonumber \\ &
+  \eta \frac{a^2}{N} \sum_{i=1}^2 \left\| \frac{\partial u^E}{\partial \tau^i} \right\|_{\partial \Omega_F} + \eta \phi^{5/3} \right), \nonumber
\end{align}
which, upon making the substitution $a=\left(\frac{3\phi}{4\pi}\right)^{1/3}$, is precisely equation \eqref{error_estimate_u^E_arb_n}.

\end{proof}

The upper bound \eqref{u^+_asymptotics} will now follow from the previous lemma once we  estimate $\left\| u^E \right\|_{H^1(\partial \Omega_F)}$.  For this purpose, it is useful to write a PDE that $u^E$ satisfies:
\begin{equation}
\label{error_pde_arb_n}
\left\{
\begin{array}{lr}
\eta \Delta u^E = \nabla p^E \text{, } \nabla \cdot u^E = 0 & x\in\Omega_F \\
% & x\in\Omega_F \\
u^E = -\Theta^l +  & x \in \partial B^l \text{, } x^l \neq 0 \\
u^E = -\Theta^l + u^{1\prime}(x) & x \in \partial B^1 \\
u^E = -\Theta^{\partial\Omega} & x \in \partial \Omega, \\
\end{array}
\right.
\end{equation}
where
%\begin{equation*}
%\begin{array}{cc}
$\Theta^l = \sum_{k\neq l} u^{1\prime}(x-x^k)-C^d$ and
$\Theta^{\partial\Omega} = \sum_k u^{1\prime}(x-x^k)-C^d$.
We will use this PDE to prove the main result of this section.

\begin{lemma}
\label{lemma_ev_upper_bound}
%For $\hat{\eta}(u^+)$ defined in equation \eqref{def_ev_u+},
There exists $C>0$ independent of $a$, $N$, and hence $\phi$,
%:=\frac{4\pi}{3} a^3$
such that, for all $0<\phi<\frac{\pi}{6}$, equation \eqref{u^+_asymptotics} holds.
%\begin{equation*}
%\left| \hat{\eta}(u^+) - \eta \left(1+\frac{5}{2} \phi\right) \right| \leq C \eta\phi^{3/2}.
%\end{equation*}

\end{lemma}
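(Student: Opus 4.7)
The plan is to plug explicit boundary estimates for $u^E=u^+-u^d$ into Lemma \ref{error_estimate_lemma_2_arb_n}, which already reduces the required inequality $|\hat\eta(u^+)-\eta(1+\tfrac{5}{2}\phi)|\leq C\eta\phi^{3/2}$ to controlling the three boundary norms $\|u^E\|_{\cup_l\partial B^l}$, $\|u^E\|_{\partial\Omega_F}$, and $\sum_i\|\partial_{\tau^i}u^E\|_{\partial\Omega_F}$, which appear in that lemma with coefficients of order $\phi^{1/3}/N$, $\phi^{1/2}/N^{1/2}$, and $\phi^{2/3}/N$. Matching these against the target $\phi^{3/2}$ and using $\phi\sim a^3$, it suffices to prove bounds of the form $\|u^E\|_{\cup_l\partial B^l}+\sum_i\|\partial_{\tau^i}u^E\|_{\cup_l\partial B^l}\leq C\sqrt{N}\,a^4$ and $\|u^E\|_{\partial\Omega}+\sum_i\|\partial_{\tau^i}u^E\|_{\partial\Omega}\leq C\sqrt{N}\,a^3$. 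The decisive feature is that $u^E$ is entirely explicit on $\partial\Omega_F$ from the PDE \eqref{error_pde_arb_n}: it equals $-\Theta^l=-\sum_{k\neq l}u^{1\prime}(x-x^k)+C^d$ on each interior $\partial B^l$ (with a small $O(a)$ correction from $u^{1\prime}(x)=-\epsilon x$ on $\partial B^1$) and $-\Theta^{\partial\Omega}=-\sum_k u^{1\prime}(x-x^k)+C^d$ on $\partial\Omega$, so the task reduces to estimating these explicit lattice sums and their tangential derivatives.

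I would first fix $C^d$ as a central reference value, for instance $C^d=\sum_{k\in\mathbb{Z}^3\setminus\{0\}}u^{1\prime}(-x^k)$, so that after the shift $y=x-x^l$ the sum defining $\Theta^l$ becomes a truncated lattice sum of the type regularised in Lemmas \ref{sum_converges_lemma} and \ref{sum_converges_lemma_3}. Combining these lemmas with the pointwise decay $|\partial^\alpha u^{1\prime}(y)|\leq Ca^3/|y|^{2+|\alpha|}$ for $|y|\geq d/2$, read off directly from \eqref{u^1prime}, should yield the uniform $L^\infty(\partial B^l)$ estimates $|\Theta^l|+\sum_i|\partial_{\tau^i}\Theta^l|\leq Ca^3$, independent of $l$ and $N$. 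Multiplying by $|\partial B^l|^{1/2}\sim a$ and summing over the $N$ particles gives the desired $C\sqrt{N}\,a^4$ bound on $\cup_l\partial B^l$; plugged into Lemma \ref{error_estimate_lemma_2_arb_n}, this produces errors of order $\eta\phi^{5/3}/\sqrt{N}$ from the first term and $\eta\phi^2/\sqrt{N}$ from the third, both strictly below $C\eta\phi^{3/2}$.

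The main obstacle is the $\partial\Omega$ contribution, because the summation lemmas require base points in $B(0,\sqrt{3}/2)$ while $\partial\Omega$ sits at distance $R\sim N^{1/3}$ from the lattice centre. A naive pointwise bound $|\Theta^{\partial\Omega}|\leq Ca^3 R$, obtained by comparing the lattice sum with $\int_{B(0,R)}|x-y|^{-2}\,dy\sim R$, produces only $\|u^E\|_{\partial\Omega}\leq Ca^3 N^{2/3}$, a factor $N^{1/6}$ too large. To recover that factor I would exploit the angular cancellation of the leading part of $u^{1\prime}$: the tensor field $y(\epsilon:yy)/|y|^5$ has vanishing spherical average, so approximating the lattice sum by its integral over $B(0,R)$ reduces $\Theta^{\partial\Omega}(x)$ to a boundary contribution on $\partial B(0,R)$ of size $O(a^3)$ via the divergence theorem, plus a sum-versus-integral remainder controlled cell-by-cell through $|\nabla u^{1\prime}|\leq Ca^3/|y|^3$ and Lemma \ref{sum_converges_lemma_3}. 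Separating the near-field piece ($|x-x^k|\leq O(1)$) from the far-field, absorbing the linear-in-$x$ part of a Taylor expansion around $-x^k$ into $C^d$, and applying the boundary-layer integral comparison should give $|\Theta^{\partial\Omega}(x)|+\sum_i|\partial_{\tau^i}\Theta^{\partial\Omega}(x)|\leq Ca^3\sqrt{R}$, which upon integration against the area element on $\partial B(0,R)$ produces $\|u^E\|_{\partial\Omega}+\sum_i\|\partial_{\tau^i}u^E\|_{\partial\Omega}\leq Ca^3 R^{3/2}=C\sqrt{N}\,a^3$, as required. Substituting $a=(3\phi/4\pi)^{1/3}$ into Lemma \ref{error_estimate_lemma_2_arb_n} and summing all three contributions then yields $|\hat\eta(u^+)-\eta(1+\tfrac{5}{2}\phi)|\leq C\eta\phi^{3/2}$, and together with Lemmas \ref{lower_bound_lemma} and \ref{upper_bound_lemma} this closes the proof of Theorem \ref{thm_ev_arb_n}.
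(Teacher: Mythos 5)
Your overall structure matches the paper's: you plug explicit $L^\infty$ estimates for $u^E$ on $\partial\Omega_F$ into Lemma~\ref{error_estimate_lemma_2_arb_n}, using the regularized lattice sums of Lemmas~\ref{sum_converges_lemma} and~\ref{sum_converges_lemma_3}. And your diagnosis of the difficulty is sharp: the naive pointwise bound $|\Theta^{\partial\Omega}(x)|\lesssim a^3 R$ for $x\in\partial\Omega$ only gives $\|u^E\|_{\partial\Omega}\lesssim a^3 N^{2/3}$, which is a factor $N^{1/6}$ too large for the second term in~\eqref{error_estimate_u^E_arb_n} to close. The paper itself is terse here (its justification of~\eqref{lemma_ev_upper_bound_3} is just ``Similarly''), so your attempt to supply the missing cancellation is exactly the right place to focus.

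However, the proposed cancellation does not hold. The vanishing spherical average of $y(\epsilon:yy)/|y|^5$ makes $\int_{B(0,\rho)} u^{1\prime}(z)\,dz$ small only when the ball is \emph{centered at the origin of the kernel}. Here one needs $\int_{B(0,R)}u^{1\prime}(x-y)\,dy=\int_{B(x,R)}u^{1\prime}(z)\,dz$ with $|x|\approx R$, which is an integral over an off-center ball, and the angular weight is $2R(\omega\cdot\hat x)_+$ rather than $1$. A direct computation in spherical coordinates gives
\[
\int_{B(x,R)}u^{1\prime}(z)\,dz
= 2R\int_{\omega\cdot\hat x>0}(\omega\cdot\hat x)\,u^{1\prime}(\omega)\,d\omega + O(a^5)
= -\tfrac{4\pi}{3}a^3\,\epsilon x + O(a^5) = -\phi\,\epsilon x + O(a^5),
\]
which is of size $O(a^3 R)$ on $\partial\Omega$, not $O(a^3)$, and is genuinely linear in $x$. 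Writing $u^{1\prime}$ as a divergence and invoking the divergence theorem produces the same quantity: the boundary flux $\int_{\partial B(0,R)}|F(x-y)|\,dS_y$ with $|F|\sim a^3/|x-y|$ is still $O(a^3 R)$. Crucially, this leading term $-\phi\,\epsilon x$ is not a constant, so it cannot be absorbed into $C^d\in\mathbb{R}^3$ (your ``absorbing the linear-in-$x$ part of a Taylor expansion around $-x^k$ into $C^d$'' is not available: $C^d$ is a constant vector). For the same reason your uniform-in-$l$ claim $|\Theta^l|+\sum_i|\partial_{\tau^i}\Theta^l|\leq Ca^3$ also fails for $l$ near $\partial\Omega$, since the one-sided lattice there produces the same $O(a^3|x^l|)$ drift. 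If one actually wants to rescue the argument one has to use the algebraic structure of the representation in Lemma~\ref{lemma_ev_representation_arb_n} rather than a raw $L^2$ traction bound: a constant $\sigma^E$ contributes $\epsilon:\int_{\partial B^l}\sigma^E\,n\,(x-x^l)\,dS\propto\epsilon:I\,\mathrm{tr}\,\epsilon=0$, and the affine part of $u^E$ contributes only at order $\phi^2$, so the linear drift $\phi\,\epsilon x$ should be subtracted from $u^E$ (equivalently, absorbed as a backflow correction into $u^d$) before applying Corollary~\ref{final_t_estimate}. As written, your chain of inequalities does not establish $\|u^E\|_{\partial\Omega}\leq C\sqrt{N}a^3$, and the proof of~\eqref{u^+_asymptotics} remains open.
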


\begin{proof}
From Lemma \ref{error_estimate_lemma_2_arb_n}, we have that there exists a $C>0$ independent of $N,a$, and hence $\phi$ such that
\begin{align}
\left| \hat{\eta}(u^+) - \eta \left(1+\frac{5}{2} \phi \right) \right| % \leq C \frac{\phi^{1/3}}{N} \left\| u^E \right\|_{H^1(\partial \Omega_F)},
\leq & C \eta \frac{\phi^{1/3}}{N} \left\| u^E \right\|_{\cup_l \partial B^l}
+ C \eta \frac{\phi^{1/2}}{N^{1/2}} \left\| u^E \right\|_{\partial \Omega_F} \nonumber \\
& + C \eta \frac{\phi^{2/3}}{N} \sum_{i=1}^2 \left\| \frac{\partial u^E}{\partial \tau^i} \right\|_{\partial \Omega_F}+C\eta \phi^{5/3}. \label{lemma_ev_upper_bound_1}
\end{align}
%Now, applying Lemma \ref{u^E_estimate_arb_n}, we have that

Now, using the formula for $u^{1\prime}$ in equation \eqref{u^1prime}, selecting $C^d$ such that the infimum
\begin{equation*}
\inf_{C^d \in \mathbb{R}^3} \left\| u^E \right \|_{(L^\infty(\partial \Omega_F))^3}
\end{equation*}
is achieved
and applying Lemma \ref{sum_converges_lemma},
\begin{align}
\left\| u^E \right\|_{\cup_l \partial B^l} & \leq C N^{1/2} a \left\| u^E \right \|_{(L^\infty(\partial \Omega_F))^3} \nonumber \\
& \leq C N^{1/2} a \left\| a^3 \sum_{z \in \mathbb{Z}^3 \setminus \{0\} \cap B(0,2N^{1/3})} \left( \frac{1}{|x-z|^2} -\frac{1}{|z|^2} \right) \right\|_{L^\infty(\partial B(0,a))}
\nonumber \\ &
\leq C N^{1/2} \phi^{4/3}. \label{lemma_ev_upper_bound_2} \\
\intertext{Similarly,}
\left\| u^E \right\|_{\partial \Omega_F} &
%\leq N^{1/2} \| \vec{u}^E \|_{(L^\infty(\partial \Omega_F))^3}\nonumber \\ &
\leq C N^{1/2} \phi, \label{lemma_ev_upper_bound_3}
\end{align}
%Now, using the formula for $u^{1\prime}$ in equation \eqref{u^1prime} and applying Lemma \ref{sum_converges_lemma},
%\begin{align}
%\left\| u^E \right\|_{\cup_l \partial B^l} & \leq C N^{1/2} a \| u^E \|_{(L^\infty(\partial \Omega_F))^3}
%\nonumber \\ &
%\leq C N^{1/2} a \left\|a^3 \sum_{l=1}^{N} \left( \frac{1}{|x-x^l|^2} -\frac{1}{|x^l|^2} \right) \right\|_{L^\infty(\partial \Omega_F)}
%%\nonumber \\ &
%\leq C N^{1/2} \phi^{4/3}. \label{lemma_ev_upper_bound_2} \\
%\intertext{Similarly,}
%\left\| u^E \right\|_{\partial \Omega_F} & \leq N^{1/2} \| u^E \|_{(L^\infty(\partial \Omega_F))^3}
%%\nonumber \\ &
%\leq C N^{1/2} \phi, \label{lemma_ev_upper_bound_3}
%\end{align}
\begin{equation}
\label{lemma_ev_upper_bound_4}
\begin{split}
\sum_{i=1}^2 \left\| \frac{\partial u^E}{\partial \tau^i} \right\|_{\partial \Omega_F}
& \leq C N^{1/2} \left\| \nabla u^E \right\|_{(L^\infty(\partial \Omega_F))^{3\times 3}} \\
& \leq C_1 N^{1/2} a^3 \sum_{l=1}^{C_2 N^{1/3}} \frac{1}{l^3} l^2 \leq C N^{1/2} \log N \phi. \\
%& \leq C N^{1/2} \log N \phi.
\end{split}
\end{equation}
Combining equations (\ref{lemma_ev_upper_bound_1}--\ref{lemma_ev_upper_bound_4}), we have the desired result.

\end{proof}

\section*{Acknowledgements}
The authors would like to thank L.~Berlyand for suggesting the problem and D.~Karpeev for reading the manuscript and providing useful suggestions.  We would also like to thank Shawn Ryan for pointing out an error in one of the proofs.
The work of B.~Haines was supported by the DOE grant
DE-FG02-08ER25862 and NSF grant DMS-0708324.
A.~L.~Mazzucato was partially supported
by NSF grants DMS-0708902, DMS-1009713 and DMS-1009714.

%\bibliography{bibdatabase}
%\bibliographystyle{10}

\end{document}